\newtheorem{thm}{Theorem}[section]
\newtheorem{lem}[thm]{Lemma}
\newtheorem{prop}[thm]{Proposition}
\newcommand{\CL}[1]{\mathcal{#1}}
\newcommand{\RM}[1]{\mathrm{#1}}
\newcommand{\BF}[1]{\mathbf{#1}}
\newcommand{\BB}[1]{\mathbb{#1}}
\newcommand{\ten}[1]{{\mathcal{#1}}} 
\title{Regularized Computation of Approximate Pseudoinverse
	of Large Matrices Using Low-Rank Tensor Train Decompositions}
\author{Namgil Lee\footnotemark[2]
	\and Andrzej Cichocki\footnotemark[3]}
\begin{document}

\maketitle



\renewcommand{\thefootnote}{\fnsymbol{footnote}}
\footnotetext[2]{Laboratory for Advanced Brain Signal Processing,
	RIKEN Brain Science Institute, Wako-shi, Saitama 3510198, Japan.
	(\email{namgil.lee@riken.jp}).}
\footnotetext[3]{Skolkovo Institute of Science and Technology (Skoltech), Moscow 143025, Russia, and 
	Systems Research Institute, Polish Academy of Sciences,
	Warsaw, Poland, and
	Laboratory for Advanced Brain Signal Processing,
	RIKEN Brain Science Institute, Wako-shi, Saitama 3510198, Japan.
	(\email{cia@brain.riken.jp}).}
\renewcommand{\thefootnote}{\arabic{footnote}}


\begin{abstract} 
We propose a new method for low-rank approximation of Moore-Penrose pseudoinverses (MPPs) of large-scale matrices using tensor networks. The computed pseudoinverses can be useful for solving or preconditioning of large-scale overdetermined or underdetermined systems of linear equations. The computation is performed efficiently and stably based on the modified alternating least squares (MALS) scheme using low-rank tensor train (TT) decompositions and tensor network contractions. The formulated large-scale optimization problem is reduced to sequential smaller-scale problems for which any standard and stable algorithms can be applied. Regularization technique is incorporated in order to alleviate ill-posedness and obtain robust low-rank approximations. Numerical simulation results illustrate that the regularized pseudoinverses of a wide class of non-square or nonsymmetric matrices admit good approximate low-rank TT representations. Moreover, we demonstrated that the computational cost of the proposed method is only logarithmic in the matrix size given that the TT-ranks of a data matrix and its approximate pseudoinverse are bounded. It is illustrated that a strongly nonsymmetric convection-diffusion problem can be efficiently solved by using the preconditioners computed by the proposed method.

\begin{keywords}
Alternating least squares (ALS),  
density matrix renormalization group (DMRG),
curse of dimensionality,  
solving of huge system of linear equations, 
low-rank tensor approximation,  
matrix product operators,
matrix product states,
preconditioning,
generalized inverse of huge matrices, 
tensor networks, 
big data.
\end{keywords}

\end{abstract}

\begin{AMS}
15A09, 65F08, 65F20, 65F22
\end{AMS}

\pagestyle{myheadings}
\thispagestyle{plain}
\markboth{N. LEE AND A. CICHOCKI}{PSEUDOINVERSE COMPUTATION
USING TENSOR TRAIN}



\section{Introduction}









In this paper, we consider the approximate numerical solution of
very large-scale systems of linear equations
	\begin{equation} \label{eq:linear_system}
	\BF{Ax} = \BF{b},
	\quad \BF{A}\in\BB{R}^{I\times J}, 
	\quad \BF{b}\in\BB{R}^{I}.
	\end{equation}
In the case that both the number of equations $I$ and the
number of unknowns $J$ have an exponential rate of increase,
e.g., $I=P^N$ and $J=Q^N$ with $N\geq 20$ and $P,Q\geq 2$,
standard numerical solution methods cannot be applied directly 
without exploiting structure of the matrix such as the sparsity 
due to high computational and storage costs. 
Such an exponential increase with size of the matrix is referred
to as the curse of dimensionality. 
For example, standard numerical methods for solving
high-dimensional partial differential equations often become
intractable as the dimensionality of the involved operators and
functions increases. 
We consider structured matrices with billions of
rows and columns and beyond, without any assumption that the data matrix is sufficiently sparse.



In order to break the curse of dimensionality, low-rank tensor
decomposition techniques are gaining growing attention in numerical
computing and signal processing
\cite{Bey2002,Cic2014a,Cic2014TN-b,Gra2013,Kho2012,Verv2014}.
A tensor of order $N$ is an $N$-dimensional array.
Higher-order tensors arise from various sources such as
multi-dimensional data analysis \cite{Cic2009,KolBa2009} and
high-dimensional problems in scientific computing
\cite{Gra2013,Kho2012}.
Tensor decomposition techniques transform such higher-order
tensors into low-parametric data representation formats.
Comprehensive surveys about traditional tensor decomposition methods
such as CANDECOMP/PARAFAC (CP) and Tucker formats
are provided in \cite{Cic2009,KolBa2009}. However, 
traditional tensor decomposition methods have limitations
in high-order tensor approximation and numerical computing
\cite{Gra2013,Kho2012}.
On the other hand, modern tensor decomposition methods such
as the hierarchical Tucker (HT) \cite{Gra2010,Hac2009} and
tensor train (TT) formats \cite{Ose2011,OseTyr2009}
are promising tools for breaking down the curse of dimensionality.
For instance, once large-scale matrices and vectors are reshaped into
higher-order tensors and represented approximately in TT format (e.g.,
see \cite{Kaz2012,Kho2011}), basic algebraic operations such as addition
and matrix-by-vector multiplication can be performed
with logarithmic storage and computational complexity \cite{Ose2011}. 


We mostly focus on the TT format, which is equivalent to the matrix product states (MPS) with open boundary conditions (OBC) in quantum physics \cite{USch2011}. Algorithms for solving optimization problems using TT formats, due to its simple linear structure of separation of variables (or dimensions), are often presented in simple recursive forms, please see, e.g., TT-rank truncation algorithm \cite{Ose2011}.


However, previous studies on numerical algorithms
for solving systems of linear equations based on TT formats
focus mostly on the cases of square data matrices, i.e.,
$\BF{A}\in\BB{R}^{I\times J}$ with $I=J$;
see, e.g., TT-GMRES \cite{Dol2013gmres},
alternating least squares (ALS) \cite{Holtz2012},
modified alternating least squares (MALS) \cite{Holtz2012},
density matrix renormalization group (DMRG) \cite{OseDol2012}, and
alternating minimal energy (AMEn) \cite{DolSav2014}.
In order to apply the existing algorithms to the case of
non-square or strongly nonsymmetric coefficient matrices,
the normal equation
$\BF{A}^\RM{T}\BF{Ax} = \BF{A}^\RM{T}\BF{b}$ can be considered,
in which the solution is equivalent in the sense of minimum of the linear least
squares problem. However, the matrix product $\BF{A}^\RM{T}\BF{A}$
is often extremely ill-conditioned since the singular values of $\BF{A}$ are
squared. The ill-conditioning can slow down the convergence rate and reduce accuracy of
developed algorithms. 

Our main objective is to develop a new method to compute an approximation to the Moore-Penrose pseudoinverse (MPP) of $\BF{A}$, i.e., $\BF{P}^\RM{T}\approx \BF{A}^\dagger\in\BB{R}^{J\times I},$
and solve the preconditioned system 
	\begin{equation} \label{eqn:precond:system}
	\BF{P}^\RM{T}\BF{Ax} = \BF{P}^\RM{T}\BF{b},
	\end{equation}
when $I\geq J$, or $\BF{AP}^\RM{T}\BF{y} = \BF{b}$ with the substitution $\BF{x}=\BF{P}^\RM{T}\BF{y}$ when $I\leq J$. The preconditioned matrix $\BF{P}^\RM{T}\BF{A}$ (resp. $\BF{AP}^\RM{T}$) should be square and near symmetric, and have more uniformly distributed eigenvalues possibly far away from zero to improve the convergence property of an iterative method. 

We can compute an approximate MPP $\BF{P}^\RM{T}\approx\BF{A}^\dagger\in\BB{R}^{J\times I}$ by minimizing the cost function
	\begin{equation} \label{eq:min_norm_original}
	F_\lambda (\BF{P}) = \left\| \BF{I}_J - \BF{P}^\RM{T}\BF{A}
	\right\|_\RM{F}^2
	+ \lambda \|\BF{P}\|_\RM{F}^2,
	\quad \lambda \geq 0,
	\end{equation}
assuming without loss of generality that $I\geq J$. The objective
function \eqref{eq:min_norm_original} with $\lambda=0$ has been
considered widely for the computation of preconditioners by, e.g.,
sparse approximate inverse (SPAI) preconditioners
\cite{Benzi1999,Grote1997}  and generalized approximate inverse
(GAINV) preconditioners \cite{Cui2009}. Most approximate inverse
techniques are claimed to be largely immune to the risk of
breakdown during the preconditioner construction \cite{Benzi1999}.
The general case that $\lambda \geq 0$ was considered by, e.g.,
the optimal low-rank regularized inverse matrix approximation
\cite{Chung2014inv,Chung2015laa}. The regularization term is helpful
for alleviating ill-posedness of the minimization problem 
\eqref{eq:min_norm_original} and improving the convergence property of
the proposed algorithm. The simulation results in 
Section~\ref{sec_num_simulation} illustrate that the
regularization approach is also helpful for avoiding overestimation of
TT-ranks of the pseudoinverses.

We propose a new method for computing the approximate generalized inverse $\BF{P}^\RM{T}$ in the form of a low-rank TT decomposition. Many of the sparse approximate inverse algorithms such as the SPAI preconditioners \cite{Grote1997} assume that approximate inverses are sparse, however, inverses of
sparse matrices are often not sparse. On the other hand, matrices represented in TT format do not have to be sparse, instead, they are required to have relatively small TT-ranks. It has been shown that inverses and preconditioners of several classes of important matrices such as Laplace-like operators and banded Toeplitz matrices admit approximate low-rank TT representations \cite{Braess2005,Kaz2012,Kho2009,Kho2011,OseTyrZam2011}.

Holtz, Rohwedder, and Schneider \cite{Holtz2012} and Oseledets and Dolgov \cite{OseDol2012} developed DMRG (also called as MALS) methods for solving systems of linear equations with square matrices in TT formats. The application to matrix inversion is presented in \cite{OseDol2012}. However, the matrix inversion method proposed in \cite{OseDol2012} is applicable only to square matrices. And when we want to apply this approach, the linear matrix equation $\BF{P}^\RM{T}\BF{A} = \BF{I}_J$ is converted to a larger linear least squares (LS) problem. On the other hand, the proposed method can be applied to general non-square matrices, whose pseudoinverses can be efficiently computed without a need to solve a larger linear LS problem. Finally, once an approximate pseudoinverse is computed, the preconditioned linear system can be solved by applying existing TT-based optimization algorithms such as the ones developed in \cite{DolSav2014,Ose2014,OseDol2012}. Numerical simulation results illustrate that the approximate
generalized inverses obtained by the proposed method provide usually low TT-ranks with a sufficiently small approximation error. The resulting preconditioned linear systems can be solved by existing
TT-based algorithms much faster than linear systems without preconditioning. A wide class of structured matrices in TT format are considered and demonstrated to have low TT-rank preconditioners.

The paper is organized as follows. In Section 2, we describe briefly mathematical representations for TT decomposition. In Section 3, we design a new tensor network and develop a MALS algorithm for computing approximate pseudoinverses. In Section 4, experimental results are presented to demonstrate the validity and effectiveness of the proposed method. In Section 5, discussion and concluding remarks are given.

\section{The TT Decompositions}

\subsection{Notation}
\label{sec_notation}

We will briefly describe notation for tensors and tensor operations
used in this paper. We refer to \cite{Cic2009,DeLath2000,KolBa2009}
for further details. Scalars, vectors, and matrices are denoted by 
lower-case letters ($a$, $b$, \ldots), lower-case bold letters 
($\BF{a}$, $\BF{b}$, \ldots), and upper-case bold letters
($\BF{A}$, $\BF{B}$, \ldots), respectively. $N$th-order tensors, i.e.,
$N$-way arrays (for $N\geq 3$), are denoted by calligraphic letters
($\ten{A}$, $\ten{B}$, \ldots). For a tensor 
$\ten{X}\in\BB{R}^{I_1\times I_2\times\cdots\times I_N}$,
where $I_n$ is the size of the $n$th \textit{mode}, 
the $(i_1,i_2,\ldots,i_N)$th entry of $\ten{X}$ is denoted by
$x_{i_1,i_2,\ldots,i_N}$. Mode-$n$ fibers of a tensor
$\ten{X}\in\BB{R}^{I_1\times I_2\times \cdots \times I_N}$ are column 
vectors $\BF{x}_{i_1,\ldots,i_{n-1},:,i_{n+1},\ldots,i_N}\in\BB{R}^{I_n}$
determined by fixing all the indices except for the $n$th index. 
The mode-$1$ contracted product of tensors
$\ten{A}\in\BB{R}^{I_1\times I_2\times\cdots\times I_N}$ and $\ten{B}
\in \BB{R}^{I_N\times J_2\times J_3\times\cdots\times J_M}$
is a binary operation defined by the tensor\footnote{
	We call for simplicity such operation mode-1 contraction, because 
	the mode one of a tensor $\ten{B}$ is contracted with the mode $N$ of 
	a tensor $\ten{A}$. 
}
	\begin{equation}
	\ten{A}\bullet\ten{B}\in\BB{R}^{I_1\times I_2\times\cdots\times I_{N-1}
	\times J_2\times J_3\times \cdots\times J_M}
	\end{equation}
with entries
	$$
	(\ten{A}\bullet\ten{B})_{i_1,i_2,\ldots,i_{N-1},j_2,j_3,\ldots,j_M}
	= \sum_{i_{N}=1}^{I_N}
	a_{i_1,i_2,\ldots,i_N} b_{i_N,j_2,j_3,\ldots,j_M}.
	$$
The mode-1 contracted product is a natural generalization of
the matrix-by-matrix product to higher-order tensors. 
Note that it has associativity: 
$(\ten{A}\bullet\ten{B})\bullet\ten{C} =
\ten{A}\bullet(\ten{B}\bullet\ten{C})$ for any tensors
$\ten{A}$, $\ten{B}$, and $\ten{C}$ of proper sizes.

Basic symbols for tensor are shown in Figure~\ref{Diagram_tensor}. In particular, symbols for vectors ($1$st-order tensors), matrices (2nd-order tensors), and 3rd-order tensors are illustrated in Figure~\ref{Diagram_tensor}(a), 
while the mode-1 contraction of two 3rd-order tensors is illustrated 
in Figure~\ref{Diagram_tensor}(b).

\begin{figure}
\centering
\begin{tabular}{cc}
\includegraphics[height=1.cm]{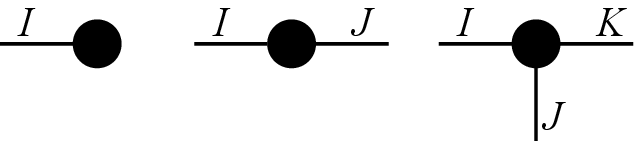} &
\includegraphics[height=1.1cm]{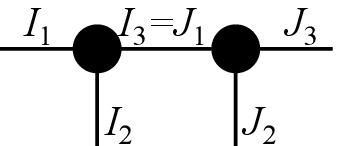} \\
(a) & (b) \\
\includegraphics[height=1.1cm]{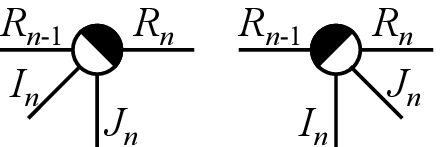} &
\includegraphics[height=1.4cm]{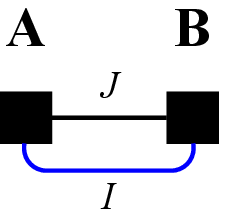}\\
(c) & (d)
\end{tabular}
\caption{\label{Diagram_tensor} Tensor network diagrams for (a)
vector, matrix, and third-order tensor, (b) mode-1 contracted
product of two third-order tensors, (c) left-orthogonalized
and right-orthogonalized fourth-order tensors, and (d)
trace$(\BF{AB})$ for matrices $\BF{A}\in\BB{R}^{I\times J}$ and
$\BF{B}\in\BB{R}^{J\times I}$.
}
\end{figure}

Indices $i_1,i_2,\ldots,i_N$, which run through $i_n=1,2,\ldots,I_n$,  
can be grouped in a single multi-index 
$\overline{i_1i_2\cdots i_N}$.\footnote{
	The multi-index can be defined by either the \textit{big-endian} 
	$\overline{i_1i_2\cdots i_N} = i_N + (i_{N-1}-1) I_N + \cdots 	+ (i_1-1)I_2I_3\cdots I_N$ 
	or the \textit{little-endian} 
	$\overline{i_1i_2\cdots i_N} = i_1 + (i_2-1) I_1 + \cdots + (i_N-1)I_1I_2\cdots I_{N-1}$. 
	In this paper, we use the little-endian convention unless otherwise mentioned. 
	}
The following two notations are defined based on the multi-index. 
\begin{definition}
For a fixed $n\in\{1,2,\ldots,N\}$, the $n$th canonical matricization of a tensor $\ten{X}\in\BB{R}^{I_1\times \cdots \times I_N}$ is defined by the matrix \cite{Holtz2011}
	\begin{equation}
	\BF{X}_{\{n\}}
	\in\BB{R}^{I_{1}I_{2}\cdots I_{n} \times
			   I_{n+1} \cdots I_{N}},
	\end{equation}
with entries 
	$$
	(\BF{X}_{\{n\}})_{\overline{i_1i_2\cdots i_n},\overline{i_{n+1}\cdots i_N}} = x_{i_1,i_2,\ldots,i_N}. 
	$$
\end{definition}
\begin{definition}
For a fixed $n\in\{1,2,\ldots,N\}$, the mode-$n$ matricization of a tensor $\ten{X}\in\BB{R}^{I_1\times \cdots \times I_N}$ is defined by the matrix \cite{KolBa2009}
	\begin{equation} \label{expr_mode_matricization}
	\BF{X}_{(n)}
	\in\BB{R}^{I_n\times I_1I_2\cdots I_{n-1}I_{n+1}\cdots I_N}, 
	\end{equation}
with entries  
	$$
	(\BF{X}_{(n)})_{i_n, \overline{i_1\cdots i_{n-1}i_{n+1}\cdots i_N}}
	= x_{i_1,i_2,\cdots,i_N}. 
	$$
\end{definition}	
Note that the columns of the mode-$n$ matricization $\BF{X}_{(n)}$ are the mode-$n$ fibers of $\ten{X}$. 
Moreover, the vectorization of a tensor $\ten{X}\in\BB{R}^{I_1\times \cdots \times I_N}$ is defined as the $N$th canonical matricization
	\begin{equation}
	\text{vec}(\ten{X})
	\equiv
	\BF{x}_{\{N\}} 
	\in\BB{R}^{I_1I_2\cdots I_N}, 
	\end{equation}	
whose entries are $(\text{vec}(\ten{X}))_{\overline{i_1i_2\cdots i_N}}
= x_{i_1,i_2,\ldots,i_N}$. 


\subsection{The HT Format}
The hierarchical Tucker (HT) and tensor train (TT) formats are low-parametric representations for vectors, matrices, and higher-order tensors. 
The HT format, introduced by Hackbusch and his coworkers \cite{Hac2012,Hac2009}, can be considered as a more general model than the TT format, but most techniques for TT format can be extended to HT format and the generalization is often straightfoward, e.g., see \cite{KresTob2011}. 

Let ${V}={V}^{(1)}\otimes \cdots\otimes {V}^{(N)}$
be a tensor product space, where ${V}^{(n)}$ $(1\leq n\leq N)$
are Hilbert spaces. For example, we can consider that
${V}^{(n)}=\BB{R}^{I_n}$ with the innerproduct defined by
$\langle\BF{v},\BF{w}\rangle = \BF{v}^\RM{T}\BF{w}$, 
or ${V}^{(n)}=\BB{R}^{I_n\times J_n}$ with
$\langle\BF{V},\BF{W}\rangle = \text{trace}(\BF{V}^\RM{T}\BF{W})$. 
For $N>1$, a binary tree $T_N$ is called a dimension tree if 
	\begin{enumerate}
	\item[(a)] all nodes $t\in T_N$ are non-empty subsets of $\{1,\ldots,N\}$, 
	\item[(b)] the set $t_{root}=\{1,\ldots,N\}$ is the root node of $T_N$, and 
	\item[(c)] each node $t\in T_N$ with $|t|\geq 2$ has two children $t_1,t_2\in T_D$ such that $t$ is a disjoint union $t=t_1\cup t_2$. 
	\end{enumerate}
A dimension tree determines recursive partitioning of the modes $\{1,\ldots,N\}$, e.g., see Figures~\ref{fig_dimension_tree}(a) and (b). We denote the set of all leaf nodes by $L\subset T_N$ and the set of children of a non-leaf node $t$ by $S^{(t)} \subset T_N$. 

Let $(R_t)_{t\in T_N}$ be positive integers with $R_{t_{root}} = 1$. An element $\BF{x}\in {V}$ is an HT tensor with $T_N$-rank bounded by $(R_t)_{t\in T_N}$ if 
	\begin{enumerate}
	\item[(a)] there exists a finite dimensional subspace $U^{(t)} = \textrm{span}\{\BF{u}^{(t)}_{r_t}: 1\leq r_t\leq R_t \} \subset {V}^{(t)}$ for each leaf node $t\in L$, and 
	\item[(b)] there exists a coefficient tensor $\ten{G}^{(t)} \in\BB{R}^{R_{t_1}\times R_{t_2}\times R_t}$ for each non-leaf node $t\in T_N\backslash L$, such that 
	\item[(c)] intermediate vectors $\BF{u}^{(t)}_{r_t} \in \bigotimes_{n\in t} {V}^{(n)}$ $(1\leq r_t\leq R_t)$ for each non-leaf node $t\in T_N\backslash L$ are defined recursively by
	\begin{equation} \label{expr_abstract_HT}
	\BF{u}^{(t)}_{r_t}  = 
	\sum_{r_{t_1}=1}^{R_{t_1}}
	\sum_{r_{t_2}=1}^{R_{t_2}}
	g^{(t)}_{r_{t_1},r_{t_2},r_t}
	\BF{u}^{(t_1)}_{r_{t_1}}
	\otimes
	\BF{u}^{(t_2)}_{r_{t_2}}, 
	\qquad 
	\{t_1,t_2\} = S^{(t)}, 
	\end{equation}
and $\BF{x}$ is represented by $\BF{x} = \BF{u}^{(t_{root})}_1$. 
	\end{enumerate}
For example, if the tree $T_N$ is the binary tree as illustrated in Figure~\ref{fig_dimension_tree}(a), then a corresponding HT tensor $\BF{x}\in V$ can be written as 
	$$
	\BF{x} = \BF{u}^{(\{1,2,3,4\})}_1 = \sum\cdots\sum 
	g^{(\{1,2,3,4\})}_{r_{12},r_{34},r_{1234}} g^{(\{1,2\})}_{r_{1},r_{2},r_{12}} 
	g^{(\{3,4\})}_{r_{3},r_{4},r_{34}} 
	\BF{u}^{(1)}_{r_1}\otimes \BF{u}^{(2)}_{r_2}\otimes \BF{u}^{(3)}_{r_3}\otimes \BF{u}^{(4)}_{r_4}
	.
	$$ 
This rather abstract representation in \eqref{expr_abstract_HT} can be immediately applied to the cases when ${V}^{(n)}=\BB{R}^{I_n}$ or ${V}^{(n)}=\BB{R}^{I_n\times J_n}$. 

\begin{figure}
\centering
\begin{tabular}{cc}
\includegraphics[height=2.3cm]{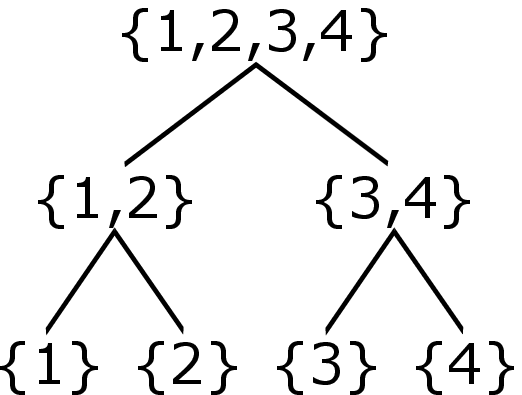}&
\includegraphics[height=2.3cm]{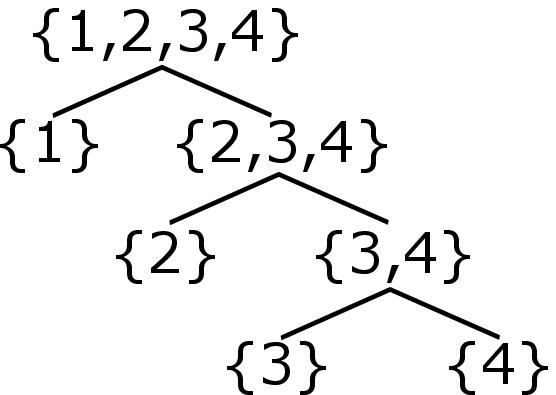}\\
(a) & (b) 
\end{tabular}
\caption{\label{fig_dimension_tree}
	Illustration of two typical examples of dimension trees
	for HT tensors of order $N=4$. 
	(a) A balanced tree structure which can generate an HT format
	and (b) an unbalanced tree structure which can generate 
	a TT format. 
}
\end{figure}

\subsection{The TT Format}

The TT format, introduced in scientific computing by Oseledets and his coworkers \cite{Ose2011,Ose2014,OseDol2012,OseTyr2009,OseTyrZam2011}, can be considered as a special case of the HT format when the dimension tree has a linear structure, such as the example in Figure~\ref{fig_dimension_tree}(b). The TT representations for large scale vectors, matrices, and higher-order tensors can also be derived from the recursive definition of the HT format in 
\eqref{expr_abstract_HT}. 

If we suppose that ${V}^{(n)} = \BB{R}^{I_n}$, then a large scale vector $\BF{x} \in \BB{R}^{I_1\cdots I_N}\equiv {V}$ can be represented in TT format by sums of Kronecker products
	\begin{equation} \label{TTvectorKron}
	\BF{x} = \sum_{r_1=1}^{R_1}
	\sum_{r_2=1}^{R_2}\cdots \sum_{r_{N-1}=1}^{R_{N-1}}
	\BF{x}^{(1)}_{1,:,r_1} \otimes
	\BF{x}^{(2)}_{r_1,:,r_2}\otimes\cdots \otimes
	\BF{x}^{(N)}_{r_{N-1},:,1},
	\end{equation}
where $\BF{x}^{(n)}_{r_{n-1},:,r_n} \in\BB{R}^{I_n}$ are mode-2 fibers of 3rd-order tensors  $\ten{X}^{(n)}\in\BB{R}^{R_{n-1}\times I_n\times R_n}$. The 3rd-order tensors $\ten{X}^{(n)}$ are called TT-cores, $R_1,R_2,\ldots,R_{N-1}$ are called TT-ranks, and we define $R_0=R_N=1$. We assume for convenience that the first and the last TT-cores (matrices) are also 3rd-order tensors of size $1\times I_1\times R_1$ and $R_{N-1}\times I_N\times 1$, respectively. We call the TT representations in \eqref{TTvectorKron} (for large-scale vectors) as the vector TT format, which is equivalent to the MPS with open boundary conditions (OBC) (see Figure~\ref{Diagram_TensorTrains}(a)(top).)

A higher-order tensor $\ten{X}\in\BB{R}^{I_1\times \cdots\times I_N}$ is said to be represented in TT format if its vectorization $\text{vec}(\ten{X}) \in\BB{R}^{I_1\cdots I_N}$ is represented in TT format \eqref{TTvectorKron}, i.e., $\BF{x} = \text{vec}(\ten{X})$. In this case, the tensor $\ten{X}$ can be expressed by the mode-1 contracted products of 3rd-order TT-cores (see Figure~\ref{Diagram_TensorTrains}(a)(top))
	\begin{equation} \label{TTformat}
	\ten{X} = \ten{X}^{(1)}\bullet\ten{X}^{(2)}\bullet\cdots
	\bullet\ten{X}^{(N)}, 
	\end{equation}
where $\ten{X}^{(n)}\in\BB{R}^{R_{n-1}\times I_n\times R_n}$ are the 3rd-order TT-cores. 

Note that TT decompositions \eqref{TTformat} of a tensor $\ten{X}$ may not be unique, and the TT-ranks $R_1,\ldots,R_{N-1}$ depend on a specific decomposition. From \eqref{TTformat}, we have $rank(\BF{X}_{\{n\}}) \leq R_n$, $n=1,\ldots,N-1$. However, if a TT decomposition of a tensor $\ten{X}$ is \textit{minimal}, i.e., all TT-cores have full left and right rank as $rank(\BF{X}^{(n)}_{(3)})=R_{n}$ and $rank(\BF{X}^{(n)}_{(1)})=R_{n-1}$, then the TT-ranks of minimal TT decompositions are unique and satisfy $rank(\BF{X}_{\{n\}}) = R_n$, $n=1,\ldots,N-1$. The TT-ranks of a tensor $\ten{X}$ is defined as the TT-ranks of a minimal TT decomposition. See \cite{Hac2012,Holtz2011} for more details. 

The storage cost can be significantly reduced if large-scale vectors and matrices can be approximately represented in TT formats with relatively small TT-ranks. For example, the storage complexity for a vector $\BF{x}\in\BB{R}^{I_1 \cdots I_N}$ represented in TT format is $\CL{O}(NQR^2)$ with $Q=\max_n(I_n)$ and $R=\max_n(R_n)$ \cite{Ose2011}.

On the other hand, if we suppose that 
${V}^{(n)} = \BB{R}^{I_n\times J_n}$, 
then a large scale matrix $\BF{A} \in \BB{R}^{I_1\cdots I_N 
\times J_1\cdots J_N} \equiv {V}$ can be represented 
in TT format as sums of Kronecker products 
	\begin{equation} \label{TTmatrixKron}
	\BF{A} = \sum_{r^A_1=1}^{R^A_1}
	\sum_{r^A_2=1}^{R^A_2}\cdots \sum_{r^A_{N-1}=1}^{R^A_{N-1}}
	\BF{A}^{(1)}_{1,:,:,r^A_1} \otimes
	\BF{A}^{(2)}_{r^A_1,:,:,r^A_2}\otimes\cdots \otimes
	\BF{A}^{(N)}_{r^A_{N-1},:,:,1},
	\end{equation}
where $\BF{A}^{(n)}_{r^A_{n-1},:,:,r^A_n} \in\BB{R}^{I_n\times J_n}$ are the slice matrices of 4th-order core tensors $\ten{A}^{(n)}\in $ \\ $ \BB{R}^{R^A_{n-1}\times I_n\times J_n\times R^A_n}$, and $R^A_1,R^A_2,\ldots,R^A_{N-1}$ are TT-ranks with $R^A_0=R^A_N=1$.
The TT format for large-scale matrices in \eqref{TTmatrixKron} is equivalent to the matrix product operators
(MPO) in quantum physics \cite{USch2011}, and we refer to it as the matrix TT format (see Figure~\ref{Diagram_TensorTrains}(b)(top)). 

\begin{figure}
\centering
\begin{tabular}{cc}
\includegraphics[width=6cm]{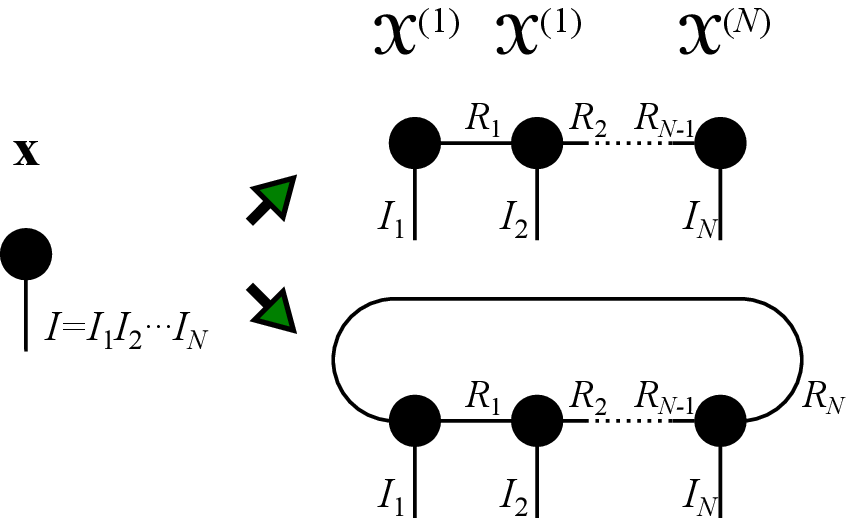} &
\includegraphics[width=6cm]{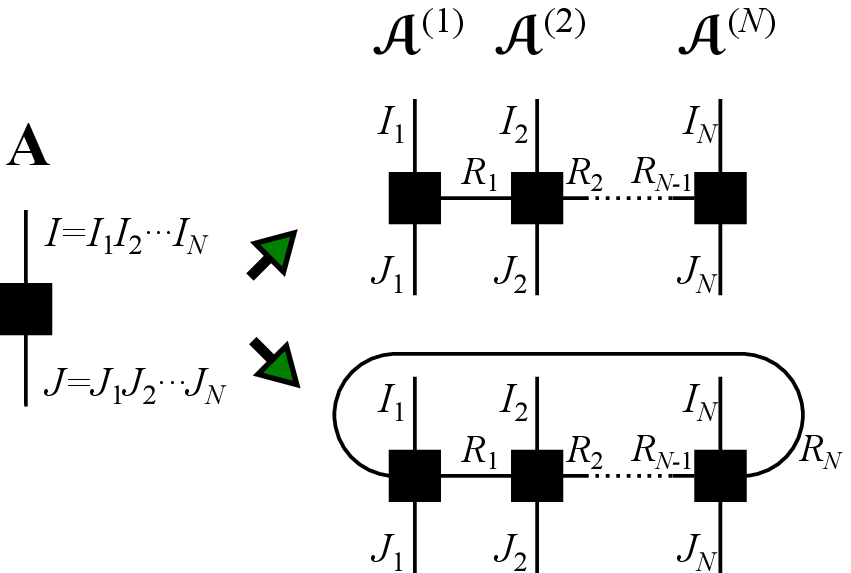}\\
(a) & (b)
\end{tabular}
\caption{\label{Diagram_TensorTrains} Tensor network diagrams for
TT decomposition of large-scale vectors and matrices.
(a) Large-scale vector in vector TT format, which is equivalent to
either the MPS with open boundary conditions (OBC) (top)
or the MPS with periodic boundary conditions (PBC) (bottom),
and (b) large-scale matrix in matrix TT format,
which is equivalent to  either the MPO with OBC (top), or
MPO with PBC (bottom).
}
\end{figure}

\subsection{Extraction of Core Tensor for Matrix TT Format}

For practical and theoretical purposes, we will introduce alternative representations for large-scale matrices based on the matrix TT format or MPO, which will be used for describing the proposed algorithm.

The vectorization, which is an operation on matrices, can be extended to matrices represented in matrix TT format as follows. Let $I=I_1I_2\cdots I_N$ and $J=J_1J_2\cdots J_N$. For a matrix $\BF{A} \in\BB{R}^{I\times J}$ in matrix TT format \eqref{TTmatrixKron} with TT-cores $\ten{A}^{(n)}\in \BB{R}^{R^A_{n-1}\times I_n\times J_n \times R^A_n}$, the extended vectorization of $\BF{A}$ can be defined by the vector in TT format, with slight abuse of our notation, 
	\begin{equation} \label{TT_vectorization}
	{\text{vec}}(\BF{A}) 
	= \sum_{r^A_1=1}^{R^A_1}\cdots
	\sum_{r^A_{N-1}=1}^{R^A_{N-1}}
	\text{vec}(\BF{A}^{(1)}_{1,:,:,r^A_1}) \otimes
	\cdots \otimes
	\text{vec}(\BF{A}^{(N)}_{r^A_{N-1},:,:,1})
	\in\BB{R}^{IJ}, 
	\end{equation}
i.e., each of the 4th-order TT-cores $\ten{A}^{(n)}$ is reshaped into the 3rd-order TT-core
of size ${R_{n-1}\times I_nJ_n \times R_n}$. 

Let 
	\begin{equation} \label{vecp_from_matP}
	{\BF{p}} = {\text{vec}}(\BF{P})\in\BB{R}^{IJ}
	\end{equation}
denote the extended vectorization \eqref{TT_vectorization} of a large-scale matrix $\BF{P}\in\BB{R}^{I\times J}$ in matrix TT format whose 4th-order TT-cores can be reshaped into 3rd-order cores 
	\begin{equation}
	{\CL{P}}^{(n)}\in\BB{R}^{R_{n-1}\times K_n\times R_n}, 
	\quad 	K_n\equiv I_nJ_n, 
	\quad 	\forall n,
	\end{equation}
for the corresponding vector $\BF{p}$. 
From the expression \eqref{TTformat}, the $N$th-order tensor 
${\ten{P}}\in\BB{R}^{K_1\times \cdots\times K_N}$
determined by ${\BF{p}}=\text{vec}({\ten{P}})$ 
can be written as the mode-1 contracted products (see Figure~\ref{Fig:TTmatrix_leftright})
	\begin{equation*} 
	{\ten{P}} = {\ten{P}}^{(1)}\bullet {\ten{P}}^{(2)}\bullet\cdots
		\bullet {\ten{P}}^{(N)}
	\in\BB{R}^{K_1\times K_2\times
	\cdots \times K_N}.
	\end{equation*}
For $n =1,2,\ldots,N$, the mode-1 contracted products of left
TT-cores and right TT-cores are respectively denoted by
	\begin{equation*}
	\begin{split}
	{\ten{P}}^{<n}
	&= {\ten{P}}^{(1)}\bullet {\ten{P}}^{(2)} \bullet
	 \cdots\bullet {\ten{P}}^{(n-1)}
	\in\BB{R}^{K_1\times\cdots\times K_{n-1}
		\times R_{n-1}},
	\\
	{\ten{P}}^{>n}
	& = {\ten{P}}^{(n+1)}\bullet {\ten{P}}^{(n+2)}
	\bullet\cdots\bullet {\ten{P}}^{(N)}
	\in\BB{R}^{R_n\times K_{n+1}\times\cdots
	\times K_N},
	\end{split}
	\end{equation*}
and we define ${\ten{P}}^{<1}={\ten{P}}^{>N}=1$.
The $N$th-order tensor 
${\ten{P}} \in\BB{R}^{K_1\times\cdots\times K_N}$ 
can be rewritten by
	\begin{equation} \label{matrixTTforTensorW}
	{\ten{P}} = 
	{\ten{P}}^{<n}\bullet {\ten{P}}^{(n,n+1)}\bullet 
	{\ten{P}}^{>n+1},
	\quad n=1,2,\ldots,N-1,
	\end{equation}
where
	$$
	{\ten{P}}^{(n,n+1)} =
	{\ten{P}}^{(n)} \bullet {\ten{P}}^{(n+1)}
	\in\BB{R}^{R_{n-1}\times K_n\times K_{n+1}
		\times R_{n+1}}
	$$
is the mode-1 contracted product of the two neighboring TT-cores. The so-called frame matrix \cite{Holtz2012,KresSteinUsh2014} for our model is defined by 
	\begin{equation}\label{def:frameMatrix}
	\BF{P}^{\neq } =
	\left( {\BF{P}}^{>n+1}_{(1)} \right)^\RM{T}
	\otimes \BF{I}_{K_{n+1}}
	\otimes \BF{I}_{K_n}
	\otimes \left({\BF{P}}^{<n}_{(n)} \right)^\RM{T}
	\in\BB{R}^{IJ \times R_{n-1}K_n K_{n+1} R_{n+1}},
	\end{equation}
where  ${\BF{P}}^{<n}_{(n)} \in \BB{R}^{R_{n-1}\times K_1 
\cdots K_{n-1}}$ and 
${\BF{P}}^{>n+1}_{(1)} \in\BB{R}^{R_{n+1}\times 
K_{n+2} \cdots K_N}$ 
are the mode-$n$ and mode-1 matricizations of the tensors
${\ten{P}}^{<n}$ and ${\ten{P}}^{>n+1}$,
respectively.

\begin{figure}
\centering
\includegraphics[height=2.5cm]{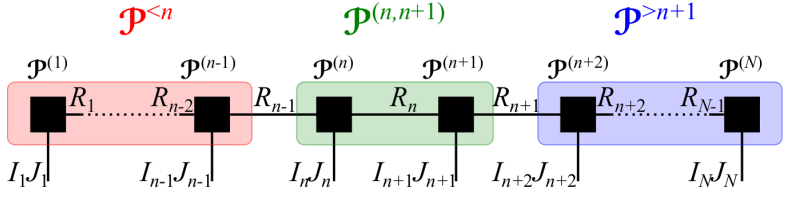}
\caption{\label{Fig:TTmatrix_leftright}The TT format for
the $N$th-order tensor ${\ten{P}}$ in \eqref{matrixTTforTensorW}.
The TT-cores are grouped into three sets and their contractions
are denoted by ${\ten{P}}^{<n}$, ${\ten{P}}^{(n,n+1)}$, and
${\ten{P}}^{>n+1}$. }
\end{figure}

From the expressions \eqref{matrixTTforTensorW} and \eqref{def:frameMatrix},  we can derive an expression for the vector ${\BF{p}} = \text{vec}({\ten{P}})$ \eqref{vecp_from_matP}
as a product of the frame matrix and a local vector $\BF{p}_n$:
	\begin{equation} \label{matrixTTforMatricization}
	{\BF{p}}
	 = \BF{P}^{\neq} \BF{p}_n
	\in\BB{R}^{IJ},
	\quad n=1,2,\ldots,N-1,
	\end{equation}
where 
	$$
	\BF{p}_n = \text{vec}({\ten{P}}^{(n,n+1)}) 
	\in \BB{R}^{R_{n-1} K_n K_{n+1} R_{n+1} }
	$$
is the vectorization of the merged TT-core ${\ten{P}}^{(n,n+1)}$.

In order to simplify computation and improve efficiency of our algorithm, we need to orthogonalize core tensors. For this purpose, left- or right-orthogonalization of the 3rd-order TT-cores  ${\ten{P}}^{(n)} \in\BB{R}^{R_{n-1}\times K_n \times R_n}$ 
is defined as follows \cite{Holtz2011}.
\begin{definition}[Left- or right-orthogonalization \cite{Holtz2011}] \label{label:definition:orthogonalize}
A 3th-order tensor ${\ten{U}} \in\BB{R}^{I \times J \times K}$ 
is called left-orthogonalized if
its mode-3 matricization 
${\BF{U}}_{(3)} \in\BB{R}^{K \times IJ}$ 
has orthonormal rows as
	\begin{equation*}
	{\BF{U}}_{(3)} {\BF{U}}_{(3)}^\RM{T}
	= \BF{I}_{K},
	\end{equation*}
and right-orthogonalized if
its mode-1 matricization ${\BF{U}}_{(1)} \in \BB{R}^{I\times JK}$ 
has orthonormal rows as
	\begin{equation*}
	{\BF{U}}_{(1)} {\BF{U}}_{(1)}^\RM{T}
	= \BF{I}_{I}.
	\end{equation*}
\end{definition}
We note that the left- or right-orthogonalization of 4th-order TT-cores $\overline{\ten{P}}^{(n)} \in\BB{R}^{R_{n-1}\times I_n\times J_n\times R_n}$ can be defined by the left- or right-orthogonalization of the reshaped 3rd-order TT-cores ${\ten{P}}^{(n)} \in\BB{R}^{R_{n-1}\times K_n\times R_n}$, where $K_n=I_nJ_n$. 
The left- or right-orthogonalized 4th-order tensors $\overline{\ten{P}}^{(n)}
\in\BB{R}^{R_{n-1}\times I_n\times J_n\times R_n}$ are denoted by the tensor network diagram shown in Figure~\ref{Diagram_tensor}(c). 
We can show that the mode-$n$ matricization ${\BF{P}}^{<n}_{(n)}$ and the mode-1 matricization ${\BF{P}}^{>n+1}_{(1)}$ have orthonormal rows if  the TT-cores $\ten{P}^{(1)},\ldots,\ten{P}^{(n-1)}$ are
left-orthogonalized and $\ten{P}^{(n+2)}, \ldots, \ten{P}^{(N)}$ are right-orthogonalized \cite{Lee2014}. Consequently, the frame matrix $\BF{P}^{\neq }$ in \eqref{def:frameMatrix} will have orthonormal columns if the TT-cores are properly  left- or right-orthogonalized.

\section{Computation of Approximate Pseudoinverse
	Using TT Decomposition}
\label{section3:Computation}

Without loss of generality, we assume that $I\geq J$ for a large-scale
matrix $\BF{A}\in\BB{R}^{I\times J}$ with
$I=I_1I_2\cdots I_N$ and $J = J_1J_2\cdots J_N$.
We formulate the following optimization problem: for $\lambda\geq 0$,
	\begin{equation} \label{optim:global:TT}
	\begin{split}
	\underset{\BF{P}}{\min}	& \quad
	\left\| \BF{I}_J - \BF{P}^\RM{T}\BF{A} \right\|_\RM{F}^2
	+ \lambda \|\BF{P}\|_\RM{F}^2\\
	\text{s.t.}	& \quad \BF{P}\in\CL{T}_{\leq\overline{R}} \subset\BB{R}^{I\times J},
	\end{split}
	\end{equation}
where $\CL{T}_{\leq\overline{R}}$ denotes the set of TT tensors of TT-ranks bounded by rank $\overline{R} = (\overline{R}_1, \ldots,$ $\overline{R}_{N-1})$. We denote the cost function by $F_\lambda (\BF{P}) \equiv \left\| \BF{I}_J - \BF{P}^\RM{T}\BF{A} \right\|_\RM{F}^2 + \lambda \|\BF{P}\|_\RM{F}^2$. We assume that the matrix $\BF{A}\in\BB{R}^{I\times J}$ is given in matrix TT format \eqref{TTmatrixKron}.


\subsection{Modified Alternating Least Squares (MALS) Algorithm}

In the MALS scheme, for each $n\in\{1,2,\ldots,N-1\}$ at each iteration, only the $n$ and $(n+1)$th TT-cores are optimized while the other TT-cores are fixed. The large-scale optimization problem \eqref{optim:global:TT} can then be reduced to a set of  much smaller scale optimization problems as we explain below.

Note that the cost function in \eqref{optim:global:TT} can be expressed, in matrix form, as 
	\begin{equation*} 
	F_\lambda(\BF{P})
	= J + \text{trace}\left( \BF{P}^\RM{T}\BF{AA}^\RM{T}\BF{P}
	-2\BF{P}^\RM{T}\BF{A}  + \lambda \BF{P}^\RM{T}\BF{P}\right).
	\end{equation*}
Let 
	$
	{\BF{p}} = {\text{vec}}(\BF{P}) \in\BB{R}^{IJ}
	$
denote the extended vectorization \eqref{TT_vectorization} of the matrix $\BF{P} \in\BB{R}^{I\times J}$. 
From the matrix TT representation \eqref{TTmatrixKron}, we can derive that 
	\begin{equation} \label{apply_extended_operation}
	{\text{vec}}(\BF{A}^\RM{T}\BF{P})
	= (\BF{I}_J {\otimes} \BF{A}^\RM{T}) {\BF{p}}
	= (\BF{I}_J {\otimes} \BF{A})^\RM{T} {\BF{p}}, 
	\end{equation}
where 
	$
	\BF{I}_J {\otimes} \BF{A}
	\in \BB{R}^{IJ\times J^2}
	$
denote the (extended) Kronecker product defined by Kronecker products between core tensors as 
	$$
	\BF{I}_J {\otimes} \BF{A}
	= \sum_{r^A_1=1}^{R^A_1} \cdots \sum_{r^A_{N-1}=1}^{R^A_{N-1}}
	(\BF{I}_{J_1} \otimes \BF{A}^{(1)}_{1,:,:,r^A_1}) \otimes \cdots \otimes
	(\BF{I}_{J_N} \otimes \BF{A}^{(N)}_{r^A_{N-1},:,:,1}). 
	$$
Using the expressions \eqref{apply_extended_operation}, we can express $F_\lambda(\BF{P})$ in the vectorized form as 
	$$
	F_\lambda(\BF{p}) 
	= J + {\BF{p}}^\RM{T} 
	(\BF{I}_J {\otimes} \BF{A}\BF{A}^\RM{T}) {\BF{p}}  -2 {\BF{p}}^\RM{T} \BF{a}
	+ \lambda {\BF{p}}^\RM{T} {\BF{p}}, 
	$$
where $\BF{a} = \text{vec}(\BF{A}) \in\BB{R}^{IJ}$.    
From the expression \eqref{matrixTTforMatricization} and the orthogonality 
of the columns of the frame matrix $\BF{P}^{\neq}$, 
$F_\lambda(\BF{P})$ can be further simplified as 
	\begin{equation} \label{reduced_normsquare}
	F_\lambda(\BF{p}_n) 
	= J + \BF{p}_n^\RM{T} \overline{\BF{A}}_n \BF{p}_n
	-2 \BF{p}_n^\RM{T} \overline{\BF{b}}_n + \lambda 
	\BF{p}_n^\RM{T} \BF{p}_n, 
	\end{equation}
where $\overline{\BF{A}}_n$ and $\overline{\BF{b}}_n$ 
are the relatively small-scale matrices and vectors defined by
	\begin{equation}\label{localAn}
	\overline{\BF{A}}_n =
	(\BF{P}^{\neq})^\RM{T}
	(\BF{I}_J {\otimes} \BF{A}\BF{A}^\RM{T})
	\BF{P}^{\neq}
	\in \BB{R}^{R_{n-1} K_n K_{n+1} R_{n+1} 
	\times R_{n-1} K_n K_{n+1} R_{n+1}}
	\end{equation}
and
	\begin{equation} \label{localBn}
	\overline{\BF{b}}_n =
	(\BF{P}^{\neq})^\RM{T}
	{\text{vec}}(\BF{A})
	\in \BB{R}^{R_{n-1} K_n K_{n+1} R_{n+1} }, 
	\quad 
	n=1,\ldots,N-1.
	\end{equation}
Finally, we can obtain a set of reduced linked optimization problems: 
	\begin{equation} \label{optim:local:TTMALS}
	\begin{split}
	\underset{ \BF{p}_n }{\min}	
	& \quad
	\BF{p}_n^\RM{T} \overline{\BF{A}}_n \BF{p}_n
	-2 \BF{p}_n^\RM{T} \overline{\BF{b}}_n + \lambda 
	\BF{p}_n^\RM{T} \BF{p}_n
	\\
	\text{s.t.}	& \quad 
	\BF{p}_n = \text{vec}({\ten{P}}^{(n,n+1)}) \in 
	\BB{R}^{R_{n-1} K_n K_{n+1} R_{n+1}}, 
	\end{split}
	\end{equation}
for $n=1,2,\ldots,N-1$.  It should be noted that the size of the matrix $\overline{\BF{A}}_n$ can be much smaller than $\BF{AA}^\RM{T}$ under the condition that $R_{n-1}$ and $R_{n+1}$ are relatively low and bounded.

Figure~\ref{fig:GAInetworks} illustrates a tensor network diagram
representing the cost function trace($\BF{P}^\RM{T}\BF{AA}^\RM{T}\BF{P} -2\BF{P}^\RM{T}\BF{A}  + \lambda \BF{P}^\RM{T}\BF{P})$ 
both in matrix and vectorized forms.
The trace is indicated by the blue line connecting the start block with the end block
(e.g., see Figure~\ref{Diagram_tensor}(d)).
In order to solve the optimization problem \eqref{optim:global:TT},
each matrix is represented approximately in matrix TT format.
The large-scale optimization problem can then be reduced to
a smaller-scale optimization problem, where the reduced cost functions are described as 
$ \BF{p}_n^\RM{T} \overline{\BF{A}}_n \BF{p}_n
	-2 \BF{p}_n^\RM{T} \overline{\BF{b}}_n + \lambda 
	\BF{p}_n^\RM{T} \BF{p}_n$ $(n=1,2,\ldots,N-1)$. 

\begin{figure}
\centering
\includegraphics[width=13cm]{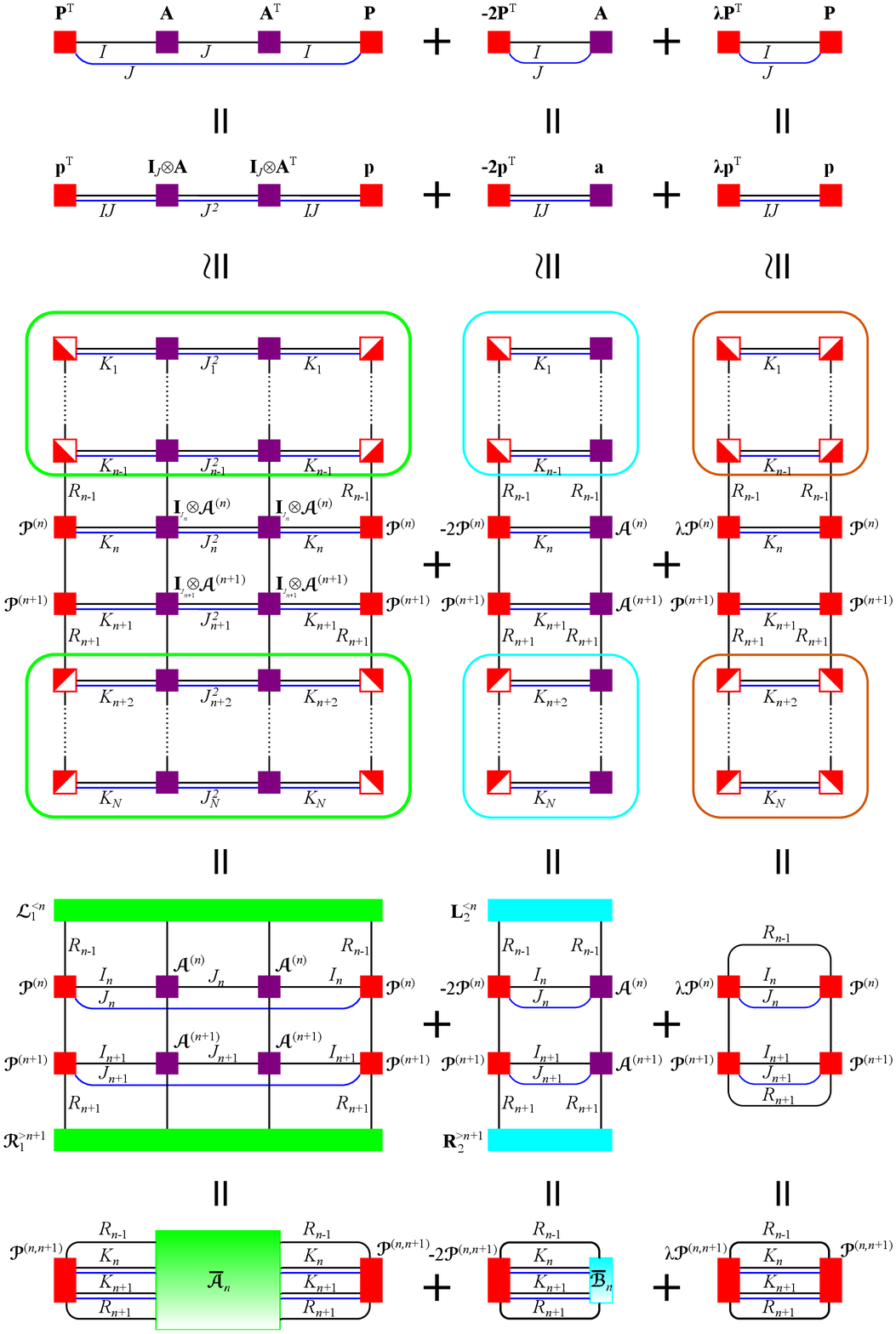}
\caption{\label{fig:GAInetworks}Conceptual
	tensor network diagrams for
	the trace$( \BF{P}^\RM{T}\BF{AA}^\RM{T}\BF{P} -2\BF{P}^\RM{T}\BF{A}
	+ \lambda \BF{P}^\RM{T}\BF{P})$ for the optimization of the
	$n$ and $(n+1)$th TT-cores, i.e., 
	$ \ten{P}^{(n,n+1)}=
	\ten{P}^{(n)}\bullet\ten{P}^{(n+1)}, 
	$ 
	in the MALS algorithm.
	The large-scale optimization problem is reduced to set of
	equivalent and much smaller optimization problems,
	which are expressed by minimization of cost functions: 
	$
	\BF{p}_n^\RM{T} \overline{\BF{A}}_n \BF{p}_n
	-2 \BF{p}_n^\RM{T} \overline{\BF{b}}_n + \lambda 
	\BF{p}_n^\RM{T} \BF{p}_n
	$
	with 
	$
	\BF{p}_n = \text{vec}({\ten{P}}^{(n,n+1)} )
	\in\BB{R}^{R_{n-1} K_n K_{n+1} R_{n+1}} 
	$ $(1\leq n \leq N-1)$.
	The left- and right-orthogonalized TT-cores of $\BF{P}$
	are indicated by the half-filled squares.
	}
\end{figure}

The relatively small-scale matrix $\overline{\BF{A}}_n$ and vector $\overline{\BF{b}}_n$ in the reduced local problem can be computed efficiently by recursive contractions of the core tensors  in the tensor network diagram in Figure~\ref{fig:GAInetworks}.  The recursive computation procedure for the tensor network contractions can be described as follows. Let
	$
	\BF{Z}_1^{(m)} \in\BB{R}^{(R_{m-1}R^A_{m-1})^2 \times (R_m R^A_m)^2}
	$
and
	$
	\BF{Z}_2^{(m)} \in\BB{R}^{R_{m-1}R^A_{m-1} \times R_mR^A_m}
	$
be matrices defined by
	\begin{equation*}
	\begin{split}
	\BF{Z}_1^{(m)} 
	& = \sum_{i_m,j_m,i_m',j_m'}
	\overline{\BF{P}}^{(m)}_{:,i_m,j_m,:} \otimes \BF{A}^{(m)}_{:,i_m,j_m',:}
	\otimes \BF{A}^{(m)}_{:,i_m',j_m',:} \otimes \overline{\BF{P}}^{(m)}_{:,i_m',j_m,:}, 
	\\
	\BF{Z}_2^{(m)} 
	& = \sum_{i_m,j_m}
	\overline{\BF{P}}^{(m)}_{:,i_m,j_m,:} \otimes \BF{A}^{(m)}_{:,i_m,j_m,:}, 
	\quad 
	m=1,\ldots,N,
	\end{split}
	\end{equation*}
where 
$\overline{\ten{P}}^{(m)}\in\BB{R}^{R_{m-1}\times I_m\times J_m\times R_m}$	
and 
$\ten{A}^{(m)}\in\BB{R}^{R^A_{m-1}\times I_m\times J_m\times R^A_m}$	
are the 4th-order TT-cores for the matrices $\BF{P},\BF{A}\in\BB{R}^{I\times J}$
in matrix TT format \eqref{TTmatrixKron}. Note that the trace terms in the 
cost function $F_\lambda(\BF{P})$ can be expressed by
	$$
	\text{trace}(\BF{P}^\RM{T}\BF{AA}^\RM{T}\BF{P}) = 
	\BF{Z}_1^{(1)} \BF{Z}_1^{(2)}  \cdots \BF{Z}_1^{(N)} , 
	\qquad
	\text{trace}(\BF{P}^\RM{T}\BF{A}) = 
	\BF{Z}_2^{(1)} \BF{Z}_2^{(2)}  \cdots \BF{Z}_2^{(N)}. 
	$$
Two 4th-order tensors $\ten{L}_1^{<m}$ and $\ten{R}_1^{>m}$
and two matrices $\BF{L}_2^{<m}$ and $\BF{R}_2^{>m}$
with sizes 
	\begin{equation*}
	\begin{split}
	\ten{L}_1^{<m}
	\in\BB{R}^{R_{m-1}\times R^A_{m-1}\times R^A_{m-1}\times R_{m-1}}, 
	&\quad
\ten{R}_1^{>m}
	\in\BB{R}^{R_{m}\times R^A_{m}\times R^A_{m}\times R_{m}},
	\\
	\BF{L}_2^{<m} \in\BB{R}^{R_{m-1}\times R^A_{m-1}},
	&\quad
	\BF{R}_2^{>m} \in\BB{R}^{R_{m}\times R^A_{m}},
	\end{split}
	\end{equation*}
are defined recursively by, for $p=1,2,$
	\begin{equation} \label{recur_L1}
	\text{vec}\left(\ten{L}_p^{<1}\right) = 1,
	\quad
	\text{vec}\left(\ten{L}_p^{<m}\right)^\RM{T} =
	\text{vec}\left(\ten{L}_p^{<m-1}\right)^\RM{T}
	\BF{Z}_p^{(m-1)},
	\
	m=2,3,\ldots,n,
	\end{equation}
	\begin{equation} \label{recur_R1}
	\text{vec}(\ten{R}_p^{>N}) =1, 
	\quad 
	\text{vec}\left(\ten{R}_p^{>m}\right) =
	\BF{Z}_p^{(m+1)}\text{vec}\left(\ten{R}_p^{>m+1}\right),
	\ 
	m = N-1, N-2, \ldots, n+1. 
	\end{equation}
The tensor $\ten{L}_1^{<m}$ defined in \eqref{recur_L1} can be 
efficiently computed by contractions of the tensors 
$\{\ten{L}_1^{<m-1}$, $\ten{P}^{(m-1)}$,
$\ten{A}^{(m-1)}$, $\ten{A}^{(m-1)}$, $\ten{P}^{(m-1)}\}$.
The tensor $\ten{R}_1^{>m}$ and matrices $\BF{L}_2^{<m}$ and
$\BF{R}_2^{>m}$ can be computed similarly. 

Finally, the matrix $\overline{\BF{A}}_n$ can be computed 
by contractions of the tensors 
$\{\ten{L}_1^{<n},$ $\ten{A}^{(n)},$ $\ten{A}^{(n)},$
$\ten{A}^{(n+1)},$ $\ten{A}^{(n+1)},$ $\ten{R}_1^{>n+1}\}$, 
as illustrated in Figure~\ref{fig:GAInetworks}. 
The vector $\overline{\BF{b}}_n$ can be computed by contractions 
of the tensors $\{\BF{L}_2^{<n}$, $\ten{A}^{(n)}$, $\ten{A}^{(n+1)}$, 
$\BF{R}_2^{>n+1}\}$.
In practice, however, the local matrix $\overline{\BF{A}}_n$ is not computed explicitly, but the matrix-by-vector multiplication $\overline{\BF{A}}_n \BF{x}$ for some $\BF{x}\in\BB{R}^{R_{n-1} K_n K_{n+1} R_{n+1}}$ is used by standard iterative methods. 
See Section~\ref{sec_comput_compl}
for more detail. 

After the solution, the resulting merged tensor 
$\ten{P}^{(n,n+1)}\equiv \ten{P}^{(n)}\bullet 
\ten{P}^{(n+1)}$
is decomposed into two separate TT-cores via 
the $\delta$-truncated SVD \cite{Ose2011}:
for the matrix $\BF{P}^{(n,n+1)}_{\{2\}}\in\BB{R}^{R_{n-1}
K_n\times K_{n+1}R_{n+1}}$, 
	\begin{equation}
	[\BF{U}_1,\BF{S}_1,\BF{V}_1] =
	SVD_\delta \left(\BF{P}^{(n,n+1)}_{\{2\}} \right),
	\end{equation}
with the subsequent updates $R_n = \min(rank(\BF{U}_1),\overline{R}_n)$,
$\BF{P}^{(n)\RM{T}}_{(3)} = \BF{U}_1,$ and
$\BF{P}^{(n+1)}_{(1)} = \BF{S}_1\BF{V}_1^\RM{T}$.
In this way, the TT-ranks can be adaptively determined 
during the iteration process, and the TT-cores can be 
left- or right-orthogonalized.

The proposed MALS algorithm\footnote{We have also developed an alternating least squares (ALS) algorithm (without merging two TT-cores), but its performance was slightly lower than the MALS algorithm presented here. }
for the computation of an approximate Moore-Penrose pseudoinverse of large-scale matrices is described in Algorithm~\ref{Alg:MALSAGINV}.

\begin{algorithm}
\caption{\label{Alg:MALSAGINV}
MALS algorithm for computing approximate pseudoinverse}
  \SetKwInOut{Input}{input}
  \SetKwInOut{Output}{output}

\Input{$\BF{A}\in\BB{R}^{I_1I_2\cdots I_N\times J_1J_2\cdots J_N}$
	in matrix TT format,
	$\epsilon>0$ (tolerance parameter),
	$\delta>0$ (truncation parameter).  }
\Output{$\BF{P}\in\BB{R}^{I_1I_2\cdots I_N\times J_1J_2\cdots J_N}$ in matrix
	TT format with TT-cores $\overline{\ten{P}}^{(n)}$ $(1\leq n\leq N)$ and TT-ranks 
	$R_1,R_2,\ldots,R_{N-1}$.} 
\BlankLine
Initialize $\BF{P}$ randomly with right-orthogonalized TT-cores
	$\ten{P}^{(3)},\ten{P}^{(4)},\ldots,\ten{P}^{(N)}$.
\\
	Set $\ten{L}_p^{<1}= 1,p=1,2.$
	Compute $\ten{R}_p^{>n},n=2,3,\ldots,N,p=1,2$ by \eqref{recur_R1}.
\\
\Repeat{a stopping criterion is met (See \eqref{stopping})}{
  \For{$n=1,2,\ldots,N-2$}{
	\tcp{Optimization}
	Optimize $\ten{P}^{(n,n+1)}$ by solving the local optimization problem \eqref{optim:local:TTMALS}.
  \\
	\tcp{Matrix Factorization by SVD}
	Compute $\delta$-truncated SVD:
	$[\BF{U}_1,\BF{S}_1,\BF{V}_1] = SVD_\delta \left( \BF{P}^{(n,n+1)}_{\{2\}} \right)$.
  \\
	Update $R_n = \min( rank(\BF{U}_1), \overline{R}_n )$.
  \\
  	Update $\overline{\ten{P}}^{(n)} = reshape(\BF{U}_1, R_{n-1}\times I_n \times J_n \times R_n)$.
  \\
	Update $\overline{\ten{P}}^{(n+1)} = reshape(\BF{S}_1\BF{V}_1^\RM{T} ,
		R_n \times I_{n+1} \times J_{n+1} \times R_{n+1})$.
  \\
	Compute $\ten{L}_p^{<n+1},p=1,2$ by \eqref{recur_L1}. 
  }
  \For{$n=N-1,N-2,\ldots,2$}{
	Perform optimization and matrix factorization similarly
  }
}
\end{algorithm}

\subsection{Properties and Practical Considerations}

\subsubsection{Existence and Uniqueness of Solution}

The proposed algorithm can be applied to general singular or non-singular structured matrices $\BF{A}\in\BB{R}^{I\times J}$ provided in TT format.
Moreover, it can be in quite straightforward way extended to the minimization of a more general objective function\footnote{Representing regularized 
	LS solution for $\widetilde{\BF{A}}\BF{X} = \BF{B}$,
	where $\widetilde{\BF{A}} = \BF{A}^\RM{T}$. }
	\begin{equation} \label{optim:general:global:TT}
	F_\lambda (\BF{P}; \BF{B}) = \left\| \BF{B}^\RM{T} - \BF{P}^\RM{T}\BF{A} \right\|_\RM{F}^2
	+ \lambda \|\BF{P}\|_\RM{F}^2,
	\end{equation}
where $\BF{B}\in\BB{R}^{J\times L}$ is a given matrix in TT format
and $\BF{P}\in\BB{R}^{I\times L}$.
For the above cost function, we can construct a similar tensor network
as shown in Figure~\ref{fig:GAInetworks}.
A minimizer of the objective function
$F_\lambda (\BF{P}; \BF{B})$, without constraints, is known as a
least squares (LS) solution. In the case that $\lambda>0$, the solution is
unique and it can be expressed by
	\begin{equation} \label{expr:sol_global_regul}
	{\BF{P}}^*_\lambda =
	\left( \BF{AA}^\RM{T} + \lambda \BF{I}_I \right)^{-1}\BF{AB},
	\quad \lambda>0.
	\end{equation}
On the other hand, if $\lambda=0$, then the solution may not be unique
and it can be expressed by
	\begin{equation} \label{expr:sol_global_zeroregul}
	{\BF{P}}^*_0 =
	\left( \BF{A}^\dagger \right)^\RM{T} \BF{B}
	+\BF{Z},
	\end{equation}
where $\BF{Z}\in\BB{R}^{I\times L}$ is any matrix satisfying
$\BF{Z}^\RM{T}\BF{A}=\BF{0}$. If $\BF{Z}=\BF{0}$, then we call it
a minimum-norm LS solution, and it is unique.
If $\BF{B}=\BF{I}_J$, then the unique solution is
equal to the (transposed) Moore-Penrose pseudoinverse. Furthermore,
it has been shown that (see \cite{Barata12})
	$$
	{\BF{P}}^*_\lambda \rightarrow
	\left( \BF{A}^\dagger \right)^\RM{T} \BF{B}
	\quad \text{as }
	\lambda\rightarrow 0.
	$$

The solution to the local optimization problem \eqref{optim:local:TTMALS},
which is also a standard LS problem, can be written
in the same way by
	\begin{equation}\label{local_solution_explicit}
	{\BF{p}}^*_n = 
	\left( \overline{\BF{A}}_n + \lambda \BF{I}_{R_{n-1} K_n K_{n+1} R_{n+1}} \right)^\dagger
	\overline{\BF{b}}_n. 
	\end{equation}
Note that the local optimal solution \eqref{local_solution_explicit} is the minimum-norm LS solution for the local optimization problem. Note that, from the expression \eqref{matrixTTforMatricization}, taking into account that the frame matrix $\BF{P}^{\neq}$ has orthonormal columns, 
	$$
	\| \BF{p}_n \|^2_\RM{F}
		= \| \BF{P} \|^2_\RM{F}.
	$$


\subsubsection{Stability and Stopping Criterion}
\label{sec:relative_error}

From \eqref{reduced_normsquare}, we note that the value of the objective function  $F_\lambda(\BF{P})$ for the global optimization problem \eqref{optim:global:TT} is exactly 
the same as the value of  the objective function  for the local optimization problem \eqref{optim:local:TTMALS} neglecting irrelevant constant $J$.
Therefore, we can conclude that the value of the 
original objective function
will monotonically decrease during the iteration process.

\begin{prop}[Monotonicity]
Let $\BF{P}_{\lambda,t}\in\BB{R}^{I\times J}$ denote the 
estimated solution at the iteration $t=0,1,2,\ldots,$ and 
$\BF{P}^*_{\lambda,t+1}\in\BB{R}^{I\times J}$
the estimated solution obtained by replacing 
the $n$ and $(n+1)$th TT-cores of $\BF{P}_{\lambda,t}$ 
with the solution $\BF{p}^*_n = \text{vec}({\ten{P}}^{(n,n+1)*})$
to the reduced optimization problem \eqref{optim:local:TTMALS}. 
Then, 
	\begin{equation}
	F_{\lambda}(\BF{P}_{\lambda,t})
	\geq F_{\lambda}( {\BF{P}}^*_{\lambda,t+1}). 
	\end{equation}
\end{prop}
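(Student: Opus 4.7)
The plan is to observe that the reduction of the global objective to the local quadratic in $\BF{p}_n$ derived in \eqref{reduced_normsquare} is in fact an \emph{equality}, not just an upper bound, provided the frame matrix $\BF{P}^{\neq}$ has orthonormal columns. Monotonicity will then follow from the optimality of the local solution.

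First I would fix the iteration index $t$ and the micro-step index $n$, and write $\BF{P} \equiv \BF{P}_{\lambda,t}$ with its current TT-cores. By the standing assumption of the MALS sweep, before the $n$-th local problem is solved, the cores $\ten{P}^{(1)},\ldots,\ten{P}^{(n-1)}$ are left-orthogonalized and $\ten{P}^{(n+2)},\ldots,\ten{P}^{(N)}$ are right-orthogonalized. As stated at the end of Section 2.4, this makes the mode-$n$ matricization $\BF{P}^{<n}_{(n)}$ and the mode-1 matricization $\BF{P}^{>n+1}_{(1)}$ have orthonormal rows, so by \eqref{def:frameMatrix} the frame matrix $\BF{P}^{\neq}$ has orthonormal columns: $(\BF{P}^{\neq})^\RM{T}\BF{P}^{\neq} = \BF{I}$.

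Next I would use this orthogonality to verify that the identity
\[
F_\lambda(\BF{P}) \;=\; J + \BF{p}_n^\RM{T}\overline{\BF{A}}_n\BF{p}_n - 2\BF{p}_n^\RM{T}\overline{\BF{b}}_n + \lambda\,\BF{p}_n^\RM{T}\BF{p}_n
\]
holds for the current merged core $\BF{p}_n = \text{vec}(\ten{P}^{(n,n+1)})$, exactly as derived in \eqref{reduced_normsquare}; in particular $\|\BF{P}\|_\RM{F}^2 = \|\BF{p}_n\|_\RM{F}^2$ from \eqref{matrixTTforMatricization} and the orthonormality of $\BF{P}^{\neq}$. Denote this reduced quadratic by $f_n(\cdot)$. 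The same identity applies to $\BF{P}^*_{\lambda,t+1}$, because replacing the merged core by $\BF{p}_n^*$ does not change the other TT-cores, hence does not change $\overline{\BF{A}}_n$, $\overline{\BF{b}}_n$, or the orthogonality of $\BF{P}^{\neq}$. Therefore $F_\lambda(\BF{P}^*_{\lambda,t+1}) = J + f_n(\BF{p}_n^*)$.

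Finally, since $\BF{p}_n^*$ is by construction a minimizer of \eqref{optim:local:TTMALS}, we have $f_n(\BF{p}_n^*) \leq f_n(\BF{p}_n)$, which gives
\[
F_\lambda(\BF{P}^*_{\lambda,t+1}) = J + f_n(\BF{p}_n^*) \;\leq\; J + f_n(\BF{p}_n) = F_\lambda(\BF{P}_{\lambda,t}),
\]
proving the claim. The only substantive point to check carefully is that the orthogonality of the frame matrix is indeed preserved throughout the sweep (so that the global/local identity applies at every micro-step); this is exactly what the left/right orthogonalization performed after each SVD truncation guarantees, so it reduces to bookkeeping on Algorithm~\ref{Alg:MALSAGINV} rather than a new calculation.
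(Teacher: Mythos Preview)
Your proposal is correct and follows exactly the same approach as the paper. The paper does not give a formal proof of this proposition; its entire justification is the sentence immediately preceding the statement, namely that by \eqref{reduced_normsquare} the global objective $F_\lambda(\BF{P})$ equals the local objective in \eqref{optim:local:TTMALS} up to the constant $J$, so minimizing the latter cannot increase the former --- which is precisely what you have spelled out in detail, including the orthonormality of $\BF{P}^{\neq}$ and the identity $\|\BF{P}\|_\RM{F}^2 = \|\BF{p}_n\|_\RM{F}^2$.
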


Moreover, we can easily compute the value of the global objective 
function from the value of the local objective function 
at each iteration. We define 
	\begin{equation} \label{rel_residual}
	r_\lambda(\BF{P}) 
	= \frac{ \left( F_\lambda(\BF{P}) \right)^{1/2} }{ J^{1/2}}, 
	\end{equation}
and call it as the relative residual. 
Note that the minimum value of $F_\lambda(\BF{P})$ can be 
expressed in terms of the 
singular values of the matrix $\BF{A}$ as follows:
	\begin{equation} \label{minGlobalObj}
	F_{\lambda}^\text{min}
	\equiv
	\underset{\BF{P}\in\BB{R}^{I\times J}} {\min} \  F_\lambda(\BF{P})
	=
	F_\lambda( {\BF{P}}^*_\lambda)
	=
	J - \sum_{r=1}^{rank(\BF{A})}
	\frac{\sigma_r^2}{\sigma_r^2 + \lambda},
	\end{equation}
where $\sigma_r$ are the nonzero singular values of $\BF{A}$. 
So, the minimum value of the relative residual is bounded as
	\begin{equation} \label{minGlobalRelres}
	1 - \frac{rank(\BF{A})}{J} 
	\leq
	\underset{\BF{P}\in\BB{R}^{I\times J}} {\min} 
	\  r_\lambda^2(\BF{P}) 
	= \frac{ F_\lambda^\text{min} }{ J }
	\leq 1, 
	\end{equation}
where the lower bound can be attained when $\lambda=0$, 
and the upper bound when $\lambda=\infty$.

Given a tolerance parameter $\epsilon>0$, the stopping
criterion of the MALS algorithm can be executed when  
a rate of decrease of the relative residual is smaller than $\epsilon$ as
	\begin{equation}\label{stopping}
	r_\lambda^2(\BF{P}_{\lambda,t-N+2})
	- r_\lambda^2(\BF{P}_{\lambda,t})
	< \epsilon^2 \cdot r_\lambda^2(\BF{P}_{\lambda,t-N+2}).
	\end{equation}
However, due to the machine precision, the computed $r_\lambda$ value
can be not sufficiently precise if its value decreases to a very small value relative
to the norm $\|\BF{I}_J\|_\RM{F}=J^{1/2}$. In this case, it should be computed
directly using the matrices $\BF{A}$ and $\BF{P}$ represented
in TT format, rather than indirectly using
$\overline{\BF{A}}_n$, $\overline{\BF{b}}_n$, 
$\ten{P}^{(n)}$, and $\ten{P}^{(n+1)}$.

\subsubsection{Selection of Truncation Parameter}

The truncation parameter $\delta$ in the $\delta$-truncated SVD step
affects the accuracy and the convergence speed. If $\delta$ is too small,
estimated TT-ranks may increase fast and the computational cost can
become high. If $\delta$ is too large, on the other hand, the algorithm 
may not be able to achieve desired accuracy.
Hence, a $\delta$ value determines the trade-off between 
computational cost and accuracy. 
We note that Oseledets \cite{Ose2011} considered
$\delta_0 = (N-1)^{-1/2}\epsilon$ in the context of low-rank approximation, 
and Lee and Cichocki \cite{Lee2015svd} set $\delta=100\delta_0$ for the first $N-1$ iterations and then set $\delta=\delta_0$ for the rest of the
 iterations. In our numerical simulations in this paper, we used a fixed value 
 $\delta = 10^{-6}(N-1)^{-1/2}$ unless mentioned otherwise.

\subsubsection{Conditioning of Local Optimization Problems}

Note that the contracted matrix $\overline{\BF{A}}_n$ in the expression
\eqref{localAn} is symmetric and positive definite, and the frame
matrix $\BF{P}^{\neq}$ has orthonormal columns if the TT-cores
$\ten{P}^{(m)}$ $(m=1,\ldots,n-1)$ are left-orthogonalized and
$\ten{P}^{(m)}$ $(m=n+2,\ldots,N)$ are right-orthogonalized. 
Assuming the orthonormality of $\BF{P}^{\neq}$, 
we can show that (see \cite{Holtz2012})
	\begin{equation*}
	\lambda_\text{min}(\BF{AA}^\RM{T})
	\leq \lambda_\text{min}(\overline{\BF{A}}_n)
	\leq \lambda_\text{max}(\overline{\BF{A}}_n)
	\leq \lambda_\text{max}(\BF{AA}^\RM{T}),
	\end{equation*}
where $\lambda_\RM{min}(\BF{M})$ and $\lambda_\RM{max}(\BF{M})$
are the minimum and maximum of the eigenvalues of a real symmetric
matrix $\BF{M}$, respectively. Therefore, keeping the TT-cores either left- and
right-orthogonalized is important for running the MALS algorithm
efficiently, especially when we use any standard iterative method for 
solving local optimization problems.  

\subsubsection{Avoiding Breakdowns}

Some of the popular and efficient algorithms for computing preconditioners suffer
from unexpected failures (which is often referred to as breakdowns)
during the preconditioner construction step, e.g., in incomplete
factorization methods \cite{Benzi1999}.
On the other hand, the proposed MALS algorithm is free of such risks,
because we can freely choose any efficient and reliable method for
local optimizations. In the experiments, we used the Matlab
function \texttt{gmres} as a standard iterative method
for the computation of the
solution to the reduced optimization problems 
\eqref{optim:local:TTMALS}. 

\subsubsection{An Effect of Regularization to Convergence}

The regularization term not only alleviates the ill-posedness of the
optimization problem \eqref{optim:global:TT} (or more generally
\eqref{optim:general:global:TT}),
but also improves the convergence property of the proposed algorithm.

Let ${\BF{P}}^*_\lambda$ be the global solution defined by
\eqref{expr:sol_global_regul},
$\BF{P}_{\lambda,t}$ the estimate at the current
iteration ($t=0,1,2,\ldots$), and 
$\BF{R}_{\lambda,t} = \BF{AB} - (\BF{AA}^\RM{T} +
\lambda \BF{I}_I) \BF{P}_{\lambda,t}$ the residual.
Since $\BF{AB} = (\BF{AA}^\RM{T} +
\lambda \BF{I}_I) {\BF{P}}^*_\lambda$ for $\lambda>0$,
we can derive that (see also \cite{Barrett94template})
	\begin{equation} \label{eqn:convergence_error_bound}
	\left \|  {\BF{P}}^*_\lambda -
		\BF{P}_{\lambda,t}
	\right \|^2_\RM{F}
	\leq
	\left \| (\BF{AA}^\RM{T} + \lambda \BF{I}_I)^{-1} \right\|_2^2
	\left\| \BF{R}_{\lambda,t} \right\|^2_\RM{F}
	\leq
	\kappa_2^2 ( \lambda )
	\cdot
	\left\| {\BF{P}}^*_\lambda \right\|^2_\RM{F}
	\cdot
	\frac{ \left\| \BF{R}_{\lambda,t} \right\|^2_\RM{F} }{ \left\| \BF{AB} \right\|^2_\RM{F} },
	\end{equation}
where
	$
	\kappa_2 ( \lambda )
	=
	\| \BF{AA}^\RM{T} + \lambda \BF{I}_I \|_2
	\| (\BF{AA}^\RM{T} + \lambda \BF{I}_I)^{-1} \|_2
	$
is the spectral condition number of the matrix $\BF{AA}^\RM{T} + \lambda \BF{I}_I$.
It is clear that the larger the $\lambda$ value is, the smaller the
values of $\kappa_2 ( \lambda )$
and $\| {\BF{P}}^*_\lambda \|^2_\RM{F}$ become.
So, the regularization with $\lambda >0$ reduces multiplicative
factors on the right-hand side of \eqref{eqn:convergence_error_bound},
which improves the convergence speed of the current estimate
$\BF{P}_{\lambda,t}$ to the global solution ${\BF{P}}^*_\lambda$.

\subsubsection{Preconditioning of Large-Scale Systems of Linear Equations}

The estimated pseudoinverse can be helpful for preconditioning of system
of linear equations (see, \eqref{eqn:precond:system}.) By using the
estimated preconditioner, we can convert overdetermined or
underdetermined systems of linear equations into well-posed determined systems.
In addition, any nonsymmetric data matrix $\BF{A}$ can be
converted to a square symmetric matrix approximately, as stated in
the following proposition.

\begin{prop}[Symmetricity]
\label{label:thm:symmetricity}
Let $\BF{A}\in\BB{R}^{I\times J}$ denote a given matrix with $I\geq J$, 
${\BF{P}}^*_\lambda\in\BB{R}^{I\times J}$ the minimizer
of $F_\lambda(\BF{P})$ defined in \eqref{expr:sol_global_regul} and
 \eqref{expr:sol_global_zeroregul} with $\BF{B}=\BF{I}_J$, $F_\lambda^\text{min}$ 
 the minimum value defined in \eqref{minGlobalObj},
and 
	\begin{equation}
	G_\lambda (\BF{P}) = F_\lambda (\BF{P}) - F_\lambda^\text{min}. 
	\end{equation}
Then, it holds that, for any $\lambda\geq 0$ and $\BF{P}\in\BB{R}^{I\times J}$,
	\begin{equation}
	\left\| \BF{P}^\RM{T}\BF{A}
	- \BF{A}^\RM{T}\BF{P} \right\|_\RM{F}^2
	\leq 2G_\lambda(\BF{P})
	- 2\lambda \left\| \BF{P} - {\BF{P}}^*_\lambda \right\|_\RM{F}^2. 
	\end{equation}
\end{prop}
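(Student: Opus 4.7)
The plan is to reduce both sides of the inequality to a bound on the antisymmetric part of a single matrix, exploiting the fact that the regularized minimizer $\BF{P}^*_\lambda$ makes $\BF{A}^\RM{T}\BF{P}^*_\lambda$ symmetric. First I will rewrite the right-hand side. Since $F_\lambda(\BF{P}) = J - 2\,\text{trace}(\BF{P}^\RM{T}\BF{A}) + \text{trace}(\BF{P}^\RM{T}(\BF{A}\BF{A}^\RM{T}+\lambda\BF{I}_I)\BF{P})$ is a convex quadratic in $\BF{P}$ and $\BF{P}^*_\lambda$ satisfies the normal equation $(\BF{A}\BF{A}^\RM{T}+\lambda\BF{I}_I)\BF{P}^*_\lambda = \BF{A}$ implicit in \eqref{expr:sol_global_regul}, completing the square in $\BF{E} := \BF{P} - \BF{P}^*_\lambda$ will yield the clean identity $G_\lambda(\BF{P}) = \|\BF{A}^\RM{T}\BF{E}\|_\RM{F}^2 + \lambda\|\BF{E}\|_\RM{F}^2$, so that the right-hand side of the proposition collapses to $2\|\BF{A}^\RM{T}\BF{E}\|_\RM{F}^2$, with no residual $\lambda$-dependence. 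The $\lambda=0$ case will be handled by taking $\BF{P}^*_0 = (\BF{A}^\dagger)^\RM{T}$ from \eqref{expr:sol_global_zeroregul} and verifying the same identity via the SVD of $\BF{A}$, or alternatively by continuity as $\lambda\to 0^+$.

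Next I will exploit the symmetry $\BF{A}^\RM{T}\BF{P}^*_\lambda = (\BF{P}^*_\lambda)^\RM{T}\BF{A}$. For $\lambda>0$ this is transparent from $\BF{A}^\RM{T}\BF{P}^*_\lambda = \BF{A}^\RM{T}(\BF{A}\BF{A}^\RM{T}+\lambda\BF{I}_I)^{-1}\BF{A}$, which is manifestly its own transpose; for $\lambda=0$ it reduces to the Moore--Penrose identity $(\BF{A}^\dagger\BF{A})^\RM{T} = \BF{A}^\dagger\BF{A}$. Substituting $\BF{P} = \BF{P}^*_\lambda + \BF{E}$ into the left-hand side, the $\BF{P}^*_\lambda$ contributions cancel and what remains is $\|\BF{C}^\RM{T} - \BF{C}\|_\RM{F}^2$, where $\BF{C} := \BF{A}^\RM{T}\BF{E}$. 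The whole proposition is therefore equivalent to the purely algebraic statement $\|\BF{C}^\RM{T} - \BF{C}\|_\RM{F}^2 \le 2\,\|\BF{C}\|_\RM{F}^2$.

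The last step is where I expect the main difficulty to sit. A direct expansion gives $\|\BF{C}^\RM{T} - \BF{C}\|_\RM{F}^2 = 2\|\BF{C}\|_\RM{F}^2 - 2\,\text{trace}(\BF{C}^2)$, so the desired bound is equivalent to the pointwise condition $\text{trace}(\BF{C}^2) \ge 0$. This is not true for an arbitrary real square matrix (antisymmetric examples give $\text{trace}(\BF{C}^2)<0$), and the routine triangle-inequality bound $\|\BF{C}^\RM{T} - \BF{C}\|_\RM{F} \le \|\BF{C}\|_\RM{F} + \|\BF{C}^\RM{T}\|_\RM{F} = 2\|\BF{C}\|_\RM{F}$ only yields a version of the inequality with constant $4$ in place of $2$. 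Closing the factor-of-two gap will require either an extra structural fact about $\BF{C} = \BF{A}^\RM{T}(\BF{P}-\BF{P}^*_\lambda)$ that forces $\text{trace}(\BF{C}^2)\ge 0$, or a sharper identity relating the skew part of $\BF{P}^\RM{T}\BF{A}$ to the regularized residual $\BF{I}_J - \BF{P}^\RM{T}\BF{A}$; the two earlier steps contain no such difficulty and are essentially completing-the-square together with the Moore--Penrose reflexive identity.
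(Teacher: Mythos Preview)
Your reduction is exactly the paper's argument. The paper first records the completing-the-square identity as a separate lemma, $G_\lambda(\BF{P}) = \|\BF{P}^\RM{T}\BF{A} - \BF{P}_\lambda^{*\RM{T}}\BF{A}\|_\RM{F}^2 + \lambda\|\BF{P} - \BF{P}^*_\lambda\|_\RM{F}^2$, then inserts $\pm\BF{P}_\lambda^{*\RM{T}}\BF{A}$ into $\BF{P}^\RM{T}\BF{A} - \BF{A}^\RM{T}\BF{P}$ and, implicitly using the symmetry of $\BF{P}_\lambda^{*\RM{T}}\BF{A}$, arrives at the same bound $\|\BF{C} - \BF{C}^\RM{T}\|_\RM{F}^2 \leq 2\|\BF{C}\|_\RM{F}^2$ on the skew part. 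At that point the paper simply asserts the inequality with constant $2$ and stops.

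Your suspicion about that last step is correct, and there is no hidden structural fact about $\BF{C}=\BF{A}^\RM{T}(\BF{P}-\BF{P}^*_\lambda)$ that rescues the constant: the proposition as stated is false. Take $\BF{A} = \BF{I}_2$, $\lambda = 0$, and $\BF{P} = \BF{I}_2 + \bigl(\begin{smallmatrix} 0 & 1 \\ -1 & 0 \end{smallmatrix}\bigr)$. Then $\BF{P}^*_0 = \BF{I}_2$, the matrix $\BF{C} = \BF{A}^\RM{T}\BF{E} = \BF{E}$ is antisymmetric, the left-hand side is $\|\BF{P}^\RM{T} - \BF{P}\|_\RM{F}^2 = 8$, while $G_0(\BF{P}) = \|\BF{I}_2 - \BF{P}^\RM{T}\|_\RM{F}^2 = 2$ and the right-hand side equals $4$. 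What does hold for every $\BF{P}$ is the version with $4$ in place of $2$, and your triangle-inequality argument already proves that. So you have not missed an idea; the paper's displayed inequality step is itself the gap.
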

For the proof of Proposition~\ref{label:thm:symmetricity}, 
we formulate the following lemma, which can be derived
immediately after some algebraic manipulation. 

\begin{lem}
\label{label:lem:Glambda}
For $\lambda \geq 0$, 
	\begin{equation}
	\begin{split}
	G_\lambda (\BF{P}) 
	= F_\lambda (\BF{P}) - F_\lambda ({\BF{P}}^*_\lambda) 
	= \left\| \BF{P}^\RM{T}\BF{A} 
	- {\BF{P}}_\lambda^{*\RM{T}}\BF{A}
	 \right\|_\RM{F}^2 + 
	\lambda \left\| \BF{P} - {\BF{P}}^*_\lambda \right\|_\RM{F}^2. 
	\end{split}
	\end{equation}
\end{lem}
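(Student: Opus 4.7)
The plan is to expand the quadratic functional $F_\lambda$ about the minimizer $\BF{P}^*_\lambda$ and show that the cross terms vanish because of the first-order optimality condition. Since $F_\lambda$ is a convex quadratic in $\BF{P}$, the difference $G_\lambda(\BF{P}) = F_\lambda(\BF{P}) - F_\lambda(\BF{P}^*_\lambda)$ must be the purely quadratic part in $\BF{Q} = \BF{P} - \BF{P}^*_\lambda$, and it remains to identify that quadratic form with the claimed expression.

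Concretely, I would write $\BF{P} = \BF{P}^*_\lambda + \BF{Q}$ and expand
\begin{equation*}
F_\lambda(\BF{P}) = \|\BF{I}_J - \BF{P}^{*\RM{T}}_\lambda\BF{A} - \BF{Q}^\RM{T}\BF{A}\|_\RM{F}^2 + \lambda \|\BF{P}^*_\lambda + \BF{Q}\|_\RM{F}^2
\end{equation*}
by bilinearity of the Frobenius inner product. The constant (in $\BF{Q}$) contribution is exactly $F_\lambda(\BF{P}^*_\lambda)$; the terms linear in $\BF{Q}$ are (up to the factor $2$) of the form $-\langle \BF{A}(\BF{I}_J - \BF{A}^\RM{T}\BF{P}^*_\lambda), \BF{Q}\rangle_\RM{F} + \lambda \langle \BF{P}^*_\lambda, \BF{Q}\rangle_\RM{F}$, which after using the cyclicity of trace simplifies to $\langle (\BF{A}\BF{A}^\RM{T}+\lambda\BF{I}_I)\BF{P}^*_\lambda - \BF{A}, \BF{Q}\rangle_\RM{F}$.

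The key step is that this linear contribution vanishes identically in $\BF{Q}$: from the closed-form expression \eqref{expr:sol_global_regul} with $\BF{B} = \BF{I}_J$ (and, for $\lambda=0$, from the fact in \eqref{expr:sol_global_zeroregul} that any LS solution still satisfies the normal equation $\BF{A}\BF{A}^\RM{T}\BF{P}^*_0 = \BF{A}$ because $\BF{A}^\RM{T}\BF{Z} = \BF{0}$ follows from $\BF{Z}^\RM{T}\BF{A} = \BF{0}$), we have the first-order optimality condition
\begin{equation*}
(\BF{A}\BF{A}^\RM{T} + \lambda \BF{I}_I)\BF{P}^*_\lambda = \BF{A}, \qquad \lambda \geq 0.
\end{equation*}
Hence the cross term is zero for every $\BF{Q}$.

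Finally, the remaining quadratic part in $\BF{Q}$ is exactly $\|\BF{Q}^\RM{T}\BF{A}\|_\RM{F}^2 + \lambda \|\BF{Q}\|_\RM{F}^2$. Re-substituting $\BF{Q} = \BF{P} - \BF{P}^*_\lambda$ gives $\|\BF{P}^\RM{T}\BF{A} - \BF{P}^{*\RM{T}}_\lambda \BF{A}\|_\RM{F}^2 + \lambda \|\BF{P} - \BF{P}^*_\lambda\|_\RM{F}^2$, which is the claim. The main obstacle is mostly bookkeeping: being careful about the dimensions and the symmetry of the inner-product rearrangements, and in the degenerate case $\lambda = 0$ checking that any (not necessarily minimum-norm) LS solution still satisfies the normal equation so that the cross-term cancellation goes through uniformly.
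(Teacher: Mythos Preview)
Your argument is correct and is exactly the ``algebraic manipulation'' the paper alludes to: the lemma is stated without a detailed proof, and expanding the convex quadratic $F_\lambda$ around its minimizer, using the normal equation $(\BF{A}\BF{A}^\RM{T}+\lambda\BF{I}_I)\BF{P}^*_\lambda=\BF{A}$ to kill the linear term, is the natural derivation. Your handling of the $\lambda=0$ case via the normal equation for LS solutions is also the right way to make the argument uniform in $\lambda\geq 0$.
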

\begin{proof}
\textit{Proof of Proposition~\ref{label:thm:symmetricity}}. 
From Lemma~\ref{label:lem:Glambda}, it follows that 
	\begin{equation*} 
	\begin{split}
	\left\| \BF{P}^\RM{T}\BF{A} - \BF{A}^\RM{T}\BF{P} \right\|_\RM{F}^2
	& = \left\| \BF{P}^\RM{T}\BF{A} 
	- {\BF{P}}_\lambda^{*\RM{T}}\BF{A}
	+ {\BF{P}}_\lambda^{*\RM{T}}\BF{A} 
	- \BF{A}^\RM{T}\BF{P} \right\|_\RM{F}^2 
	\leq 2 \left\| \BF{P}^\RM{T}\BF{A} 
	- {\BF{P}}_\lambda^{*\RM{T}}\BF{A}
	 \right\|_\RM{F}^2	\\
	& = 2G_\lambda(\BF{P})
	- 2\lambda \left\| \BF{P} - {\BF{P}}^*_\lambda \right\|_\RM{F}^2. 
	\end{split}
	\end{equation*}
\end{proof}

The distribution of the eigenvalues and the singular values 
of the preconditioned matrix $\BF{P}^\RM{T}\BF{A}$ affects the 
convergence of an iterative method. The following theorem states that 
the eigenvalues and the singular values of $\BF{P}^\RM{T}\BF{A}$ 
obtained by the proposed method can be made close 
to the eigen/singular values of the matrix 
${\BF{P}}_\lambda^{*\RM{T}}\BF{A}$
by decreasing the $G_\lambda(\BF{P})$ value. 

\begin{thm} 
\label{thm_eigen_singularvalue}
Let $\BF{A}, {\BF{P}}^*_\lambda \in\BB{R}^{I\times J}$
 $(I\geq J)$ and $G_\lambda (\BF{P})$ be defined 
 as in Proposition~\ref{label:thm:symmetricity}. 
Then, for any $\lambda \geq 0$ and $\BF{P}\in\BB{R}^{I\times J}$,
		\begin{align}
		\sum_{r=1}^{J} 
		\left| \lambda_r ([\BF{P}^\RM{T}\BF{A}]_\RM{S})
		- \lambda_r ({\BF{P}}_\lambda^{*\RM{T}}\BF{A})
		\right|^2 
		& \leq G_\lambda (\BF{P})
		- \lambda \left\| \BF{P} - {\BF{P}}^*_\lambda \right\|_\RM{F}^2, 
		\label{thm_eigenvalue}
		\\
		\sum_{r=1}^{J} 
		\left| \sigma_r (\BF{P}^\RM{T}\BF{A})
		- \sigma_r ({\BF{P}}_\lambda^{*\RM{T}}\BF{A})
		\right|^2 
		& \leq G_\lambda (\BF{P})
		- \lambda \left\| \BF{P} - {\BF{P}}^*_\lambda \right\|_\RM{F}^2, 
		\label{thm_singularvalue}
		\end{align}
where $[\BF{N}]_\RM{S} = (\BF{N}+\BF{N}^\RM{T})/2$, 
$\lambda_r(\BF{M})$ are the eigenvalues of a real symmetric 
matrix $\BF{M}$, $\sigma_r(\BF{N})$ are the singular values of 
a matrix $\BF{N}$, and both $\lambda_r(\BF{M})$ and 
$\sigma_r(\BF{N})$ are arranged in decreasing order. 
\end{thm}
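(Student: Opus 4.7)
The plan is to reduce both inequalities to classical perturbation bounds (Hoffman--Wielandt and Mirsky) by first simplifying the right-hand side using Lemma~\ref{label:lem:Glambda}. Subtracting $\lambda \|\BF{P} - {\BF{P}}^*_\lambda\|_\RM{F}^2$ from both sides of Lemma~\ref{label:lem:Glambda} yields the clean identity
$$
G_\lambda(\BF{P}) - \lambda \|\BF{P} - {\BF{P}}^*_\lambda\|_\RM{F}^2
= \|\BF{P}^\RM{T}\BF{A} - {\BF{P}}_\lambda^{*\RM{T}}\BF{A}\|_\RM{F}^2,
$$
so in both \eqref{thm_eigenvalue} and \eqref{thm_singularvalue} it suffices to bound the eigenvalue/singular-value discrepancy by $\|\BF{P}^\RM{T}\BF{A} - {\BF{P}}_\lambda^{*\RM{T}}\BF{A}\|_\RM{F}^2$.

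For the singular-value inequality \eqref{thm_singularvalue}, I would invoke Mirsky's theorem directly: for any two matrices $\BF{M},\BF{N}$ of the same size,
$\sum_{r} (\sigma_r(\BF{M})-\sigma_r(\BF{N}))^2 \leq \|\BF{M}-\BF{N}\|_\RM{F}^2$, applied with $\BF{M} = \BF{P}^\RM{T}\BF{A}$ and $\BF{N} = {\BF{P}}_\lambda^{*\RM{T}}\BF{A}$. This is the end of the singular value part.

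For the eigenvalue inequality \eqref{thm_eigenvalue}, the crucial observation is that $ {\BF{P}}_\lambda^{*\RM{T}}\BF{A}$ is itself symmetric: from \eqref{expr:sol_global_regul} with $\BF{B}=\BF{I}_J$ we have ${\BF{P}}_\lambda^{*\RM{T}}\BF{A} = \BF{A}^\RM{T}(\BF{AA}^\RM{T}+\lambda\BF{I}_I)^{-1}\BF{A}$ for $\lambda>0$, and from \eqref{expr:sol_global_zeroregul} we obtain ${\BF{P}}_0^{*\RM{T}}\BF{A} = \BF{A}^\dagger\BF{A}$ (since $\BF{Z}^\RM{T}\BF{A}=\BF{0}$), which is the orthogonal projector onto the row space of $\BF{A}$. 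In either case ${\BF{P}}_\lambda^{*\RM{T}}\BF{A} = [{\BF{P}}_\lambda^{*\RM{T}}\BF{A}]_\RM{S}$. Apply the Hoffman--Wielandt theorem to the two real symmetric matrices $[\BF{P}^\RM{T}\BF{A}]_\RM{S}$ and ${\BF{P}}_\lambda^{*\RM{T}}\BF{A}$ to get
$$
\sum_{r=1}^{J} \bigl|\lambda_r([\BF{P}^\RM{T}\BF{A}]_\RM{S})
- \lambda_r({\BF{P}}_\lambda^{*\RM{T}}\BF{A})\bigr|^2
\leq \bigl\|[\BF{P}^\RM{T}\BF{A}]_\RM{S} - {\BF{P}}_\lambda^{*\RM{T}}\BF{A}\bigr\|_\RM{F}^2.
$$
Finally, since the symmetric and antisymmetric parts of a square matrix are orthogonal in the Frobenius inner product, $\|[\BF{M}]_\RM{S}\|_\RM{F} \leq \|\BF{M}\|_\RM{F}$, and applying this with $\BF{M} = \BF{P}^\RM{T}\BF{A} - {\BF{P}}_\lambda^{*\RM{T}}\BF{A}$ (using once more that ${\BF{P}}_\lambda^{*\RM{T}}\BF{A}$ is symmetric, so it coincides with its symmetric part) finishes the bound.

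The only nontrivial step is recognizing the symmetry of ${\BF{P}}_\lambda^{*\RM{T}}\BF{A}$ in both the regularized and unregularized regimes, since this is precisely what lets us invoke Hoffman--Wielandt on a pair of symmetric matrices and then absorb the symmetrization cost for free via the orthogonal symmetric/antisymmetric decomposition. Everything else is routine application of classical matrix perturbation inequalities.
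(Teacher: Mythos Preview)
Your proof is correct and follows the same overall skeleton as the paper: reduce the right-hand side via Lemma~\ref{label:lem:Glambda} to $\|\BF{P}^\RM{T}\BF{A} - {\BF{P}}_\lambda^{*\RM{T}}\BF{A}\|_\RM{F}^2$, pass to $\|[\BF{P}^\RM{T}\BF{A}]_\RM{S} - {\BF{P}}_\lambda^{*\RM{T}}\BF{A}\|_\RM{F}^2$ via the symmetric/antisymmetric orthogonality, and then bound the eigenvalue and singular-value discrepancies by the Frobenius distance.

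The one genuine difference is at the last step. You invoke Hoffman--Wielandt and Mirsky as black boxes, whereas the paper re-derives these inequalities in place: it expands the Frobenius norm, applies Lasserre's trace inequality $\mathrm{trace}(\BF{MN}) \leq \sum_r \lambda_r(\BF{M})\lambda_r(\BF{N})$ for the Hermitian case and von Neumann's trace inequality $|\mathrm{trace}(\BF{MN})| \leq \sum_r \sigma_r(\BF{M})\sigma_r(\BF{N})$ for the singular-value case, and reassembles the squares. Your route is shorter and perfectly legitimate; the paper's route is more self-contained but is really just an inline proof of the same two classical results. A small point in your favor: you make the symmetry of ${\BF{P}}_\lambda^{*\RM{T}}\BF{A}$ explicit (and handle the $\lambda=0$ case carefully via $\BF{Z}^\RM{T}\BF{A}=\BF{0}$), whereas the paper uses this symmetry tacitly when it applies Lasserre's inequality to the pair $[\BF{P}^\RM{T}\BF{A}]_\RM{S}$ and ${\BF{P}}_\lambda^{*\RM{T}}\BF{A}$.
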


\begin{proof} 
Since for any $\BF{M}\in\BB{R}^{J\times J}$, it holds that 
$\| \BF{M} \|_\RM{F}^2 - \| [\BF{M}]_\RM{S} \|_\RM{F}^2
= \| \BF{M} - \BF{M}^\RM{T} \|_\RM{F}^2/4 \geq 0 $, 
we can derive, by the substitution
$\BF{M} = \BF{P}^\RM{T}\BF{A} 
- {\BF{P}}_\lambda^{*\RM{T}}\BF{A}$, that 
	$$
	\left\| \BF{P}^\RM{T}\BF{A} 
	- {\BF{P}}_\lambda^{*\RM{T}}\BF{A} \right\|_\RM{F}^2
	\geq 
	\left\| [\BF{P}^\RM{T}\BF{A}]_\RM{S}
	- {\BF{P}}_\lambda^{*\RM{T}}\BF{A} \right\|_\RM{F}^2. 
	$$
We can expand the right hand side of the above inequality as 
	\begin{equation}
	\left\| [\BF{P}^\RM{T}\BF{A}]_\RM{S}
	- {\BF{P}}_\lambda^{*\RM{T}}\BF{A} \right\|_\RM{F}^2
	= \left\| [\BF{P}^\RM{T}\BF{A}]_\RM{S} \right\|_\RM{F}^2
	+ \left\| {\BF{P}}_\lambda^{*\RM{T}}\BF{A} \right\|_\RM{F}^2
	-2 \cdot \text{trace}\left( [\BF{P}^\RM{T}\BF{A}]_\RM{S}
	\cdot \BF{A}^\RM{T} {\BF{P}}^*_\lambda
	\right). 
	\end{equation}
Note that, for a  matrix $\BF{M}\in\BB{R}^{J\times J}$
with a Shur decomposition $\BF{M}=\BF{QTQ}^\RM{T}$, we have 
	$$
	\| \BF{M} \|_\RM{F}^2 = \| \BF{T} \|_\RM{F}^2
	\geq 	\| \text{diag}(\BF{T}) \|_2^2  
	= 	\sum_{r=1}^J | \lambda_r(\BF{M}) |^2. 
	$$
As it is stated in \cite[Lemma II.1]{Lass1995}, the following relations hold: 
for any $J\times J$ Hermitian matrices $\BF{M}$ and $\BF{N}$, 
	$$
	\sum_{r=1}^J 
	\lambda_r (\BF{M}) \lambda_{J-r+1} (\BF{N})
	\leq 
	\text{trace}( \BF{MN} )  
	\leq 
	\sum_{r=1}^J 
	\lambda_r(\BF{M}) \lambda_r(\BF{N}). 
	$$
By using the above inequalities, we can derive that 
	\begin{equation}
	\begin{split}
	\left\| [\BF{P}^\RM{T}\BF{A}]_\RM{S}
	- {\BF{P}}_\lambda^{*\RM{T}}\BF{A} \right\|_\RM{F}^2
	&  \geq 
	\sum_{r=1}^J \left( 
	| \lambda_r( [\BF{P}^\RM{T}\BF{A}]_\RM{S} ) |^2
	+ | \lambda_r( {\BF{P}}_\lambda^{*\RM{T}}\BF{A} ) |^2
	-2 \lambda_r( [\BF{P}^\RM{T}\BF{A}]_\RM{S} )
	\lambda_r( {\BF{P}}_\lambda^{*\RM{T}}\BF{A} )
	\right)
	\\
	& = 
	\sum_{r=1}^{J} 
		\left| \lambda_r ([\BF{P}^\RM{T}\BF{A}]_\RM{S})
		- \lambda_r ({\BF{P}}_\lambda^{*\RM{T}}\BF{A})
		\right|^2. 
	\end{split}
	\end{equation}
The result in \eqref{thm_eigenvalue} follows using 
Lemma~\ref{label:lem:Glambda}. 

To prove the result in \eqref{thm_singularvalue}, 
we use the expression
	\begin{equation}
	\left\| \BF{P}^\RM{T}\BF{A}
	- {\BF{P}}_\lambda^{*\RM{T}}\BF{A} \right\|_\RM{F}^2
	= \left\| \BF{P}^\RM{T}\BF{A} \right\|_\RM{F}^2
	+ \left\| {\BF{P}}_\lambda^{*\RM{T}}\BF{A} \right\|_\RM{F}^2
	-2 \cdot \text{trace}\left( \BF{P}^\RM{T}\BF{A}
	\cdot \BF{A}^\RM{T} {\BF{P}}^*_\lambda
	\right). 
	\end{equation}
For a matrix $\BF{M}\in\BB{R}^{J\times J}$, 
we have $\|\BF{M}\|_\RM{F}^2 = \sum_{r=1}^J |\sigma_r (\BF{M}) |^2$. 
By applying the von Neumann's trace inequality \cite{Mirsky75}, which states that, 
for $J\times J$ matrices $\BF{M}$ and $\BF{N}$, 
	$$
	| \text{trace} (\BF{MN}) | \leq 
	\sum_{r=1}^J \sigma_r(\BF{M}) \sigma_r(\BF{N}),
	$$
we can finally derive that 
	$$
	\left\| \BF{P}^\RM{T}\BF{A}
	- {\BF{P}}_\lambda^{*\RM{T}}\BF{A} \right\|_\RM{F}^2
	\geq 
	\sum_{r=1}^{J} 
		\left| \sigma_r (\BF{P}^\RM{T}\BF{A})
		- \sigma_r ({\BF{P}}_\lambda^{*\RM{T}}\BF{A})
		\right|^2 . 
	$$
Similarly, we can obtain the result in \eqref{thm_singularvalue} by using
Lemma~\ref{label:lem:Glambda}. 
\end{proof}

From \eqref{thm_eigenvalue} of 
Theorem~\ref{thm_eigen_singularvalue}, 
we can say that the eigenvalues of the matrix $[\BF{P}^\RM{T}\BF{A}]_\RM{S}$ can become 
close to those of the symmetric positive semidefinite matrix 
${\BF{P}}_\lambda^{*\RM{T}}\BF{A}$
when $G_\lambda(\BF{A})$ is small enough. 
If $\BF{A}$ is of full column rank, then the smallest 
eigenvalue of $[\BF{P}^\RM{T}\BF{A}]_\RM{S}$ 
can approach to 
$\lambda_J({\BF{P}}_\lambda^{*\RM{T}}\BF{A})
= \sigma_J^2/(\sigma_J^2+\lambda) >0$, 
where $\sigma_j = \sigma_j(\BF{A})$, 
so the preconditioned matrix $\BF{P}^\RM{T}\BF{A}$
can also become positive definite. 
In addition, from \eqref{thm_singularvalue} of
Theorem~\ref{thm_eigen_singularvalue}, 
the spectral condition number of $\BF{P}^\RM{T}\BF{A}$
also can be made close to that of ${\BF{P}}_\lambda^{*\RM{T}}\BF{A}$
by decreasing $G_\lambda(\BF{P})$ gradually.

\subsubsection{Computational Complexity}
\label{sec_comput_compl}

The most time-consuming step in the MALS algorithm
is the optimization step for solving the reduced optimization 
problems \eqref{optim:local:TTMALS}. 
Let $Q=\max_n(I_n,J_n)$, $R=\max_n(R_n)$, and $R_A=\max_n(R^A_n)$. 
For a fast computation of the solution, we can apply standard 
iterative methods such as the \texttt{gmres} in Matlab
for solving the system of linear equations 
$(\overline{\BF{A}}_n + \lambda \BF{I}_{R_{n-1}K_nK_{n+1}R_{n+1}})\BF{x}$ $= \overline{\BF{b}}_n$. 
In this case,
the matrix $\overline{\BF{A}}_n$ does not need to be computed 
explicitly, instead the matrix-by-vector multiplication 
$\overline{\BF{A}}_n\BF{x}$ can be computed faster 
by the gradual (recursive) contraction of the tensors 
$\{\ten{L}_1^{<n},$ $\ten{X},$ $\ten{A}^{(n)},$ $\ten{A}^{(n)},$
$\ten{A}^{(n+1)},$ $\ten{A}^{(n+1)},$ $\ten{R}_1^{>n+1}\}$, 
as illustrated in Figure~\ref{fig_matvec_Ax}. 
The computational complexity for the multiplication $\overline{\BF{A}}_n \BF{x}$ is $\CL{O}( R^3R_A^2 Q^4 + R^2R_A^3Q^5)$. 
Since the computational cost for each iteration is independent of the order $N$ if $R, R_A,$ and $Q$ are bounded, the total computational cost for optimizing every TT-cores is logarithmic in the matrix size $Q^N\times Q^N$. 

On the other hand, the explicit computation of the  matrix 
$\overline{\BF{A}}_n$ can be performed by the iterative contraction of 
the tensors $\{\ten{L}_1^{<n},$ $\ten{A}^{(n)},$ $\ten{A}^{(n)},$
$\ten{A}^{(n+1)},$ $\ten{A}^{(n+1)},$ $\ten{R}_1^{>n+1}\}$, 
and its computational complexity is 
$\CL{O}(R^4R_A^2Q^4 + R^2R_A^3Q^5)$. Moreover, 
a direct method, such as the LU factorization or the pseudoinverse, for solving the system $\overline{\BF{A}}_n \BF{x} = 
\overline{\BF{b}}_n$ can cost up to $\CL{O}(R^6Q^6)$. 

\begin{figure}
\centering
\includegraphics[height=2.3cm,width=12.7cm]{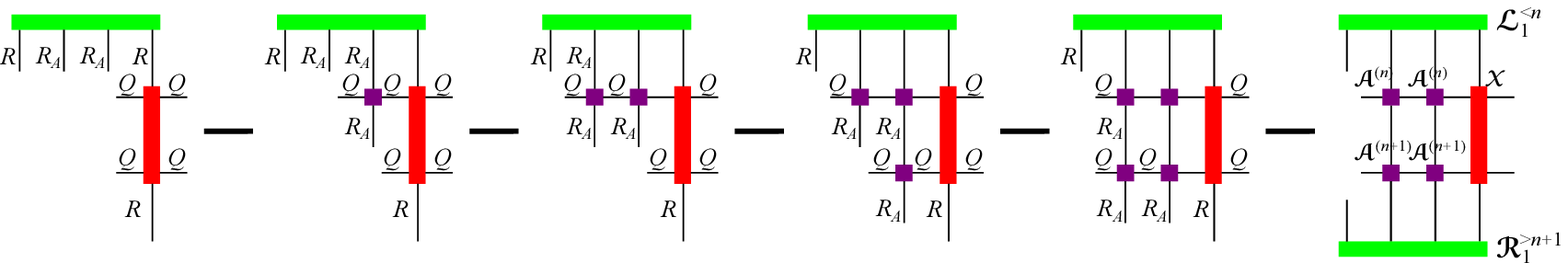}
\caption{\label{fig_matvec_Ax}
A procedure for the sequential computation of the matrix-by-vector multiplication
$\overline{\BF{A}}_n \BF{x}$ with a vector
$\BF{x} \in \BB{R}^{R_{n-1} I_nJ_n I_{n+1}J_{n+1} R_{n+1}}$, 
which can be carried out by a sequential contraction of the tensors 
$\{\ten{L}_1^{<n},$ $\ten{X},$ $\ten{A}^{(n)},$ $\ten{A}^{(n)},$
$\ten{A}^{(n+1)},$ $\ten{A}^{(n+1)},$ $\ten{R}_1^{>n+1}\}$,
where $\ten{X}\in\BB{R}^{R_{n-1} \times I_n \times J_n \times I_{n+1}
 \times J_{n+1} \times R_{n+1}}$ is a 6th-order tensor. 
The mode sizes are denoted for simplicity by 
$Q=\max_n(I_n,J_n)$, $R=\max_n(R_n)$, and $R_A=\max_n(R^A_n)$.
}
\end{figure}

The estimated preconditioner is reusable
and does not need to be updated during the process of solving
the preconditioned system of linear equations. However,
the preconditioner $\BF{P}$ in the TT format needs to take
small TT-ranks because the product $\BF{P}^\RM{T}\BF{A}$
increases the TT-ranks \cite{Ose2011}.
Note that in the case of sparse approximate inverse preconditioning
techniques \cite{Benzi1999}, similarly, the product $\BF{P}^\RM{T}\BF{A}$
would deteriorate sparsity of $\BF{A}$ even if both $\BF{P}$ and
$\BF{A}$ are sparse. In such cases, the preconditioning
can be performed implicitly.

\section{Numerical Simulations}
\label{sec_num_simulation}

In the simulation study, we considered several matrices including rectangular matrices and nonsymmetric matrices in order to check validity and evaluate performance of the proposed MALS algorithm.
We also compared the proposed method with an alternative method 
which applies a ``standard'' MALS method to solve the 
large scale system of linear equations (see also \cite{OseDol2012})
	\begin{equation} \label{std_mals_equation}
	(\BF{I}_J {\otimes} \BF{AA}^\RM{T}  + 
	\lambda \BF{I}_I {\otimes} \BF{I}_J
	) {\text{vec}}(\BF{P})
	= {\text{vec}}(\BF{A}). 
	\end{equation}
In practice, the matrix $\BF{I}_J {\otimes} \BF{AA}^\RM{T}$ is represented in matrix TT format with 4th-order TT-cores of the sizes $(R^A_{n-1})^2 \times I_nJ_n \times I_nJ_n \times (R^A_n)^2$, 
$n=1,2,\ldots,N$. Due to the relatively large sizes of the TT-cores, the computational cost for the matrix-by-vector multiplication in the standard MALS algorithm has complexity of $\CL{O}(R^3R_A^2Q^4 + R^2R_A^4Q^6)$ \cite{OseDol2012}.

The TT-ranks of the estimated pseudoinverse were bounded by $\overline{R}=(50,\ldots,50)$. 
We repeated the simulations 30 times with random initializations and averaged the results.
In the simulation results, we calculated the value of the relative residual $r_\lambda$ \eqref{rel_residual} as described in Section~\ref{sec:relative_error}. 
Our code was implemented in Matlab. We used the Matlab version of TT-Toolbox \cite{Ose2014} for building and manipulating TT formats for large-scale matrices and vectors. Simulations were performed on a desktop computer with an Intel Core i7 4960X CPU at 3.60 GHz with 32 GB of memory running Windows 7 Professional and Matlab R2010b.

\subsection{Example 1: Rectangular Circulant Matrices with Prescribed Singular Values}

Rectangular matrices $\BF{A}\in\BB{R}^{2^{N+1} \times 2^N}$ 
were defined by $\BF{A} = \frac{1}{\sqrt{2}} 
\begin{bmatrix}\BF{C}\\[-0.35pc] \BF{C}\end{bmatrix}$, where 
$\BF{C}=[c_{i-j}]_{ij}\in\BB{R}^{2^N\times 2^N}$ are circulant matrices, 
i.e., $c_{k} = c_{k-2^N}, k=0,1,\ldots,2^N-1$, whose entries 
were determined in the following way.  
The (unordered) eigen/singular values of $\BF{C}$ were 
first prescribed by, with $J=2^N$, 
(see, Figure~\ref{fig-circ-singularvalue}(a))
	$$
	\sigma_j = f(j)
	= \frac{1}{B} \max\left(0, 
	 \left| \frac{j}{J} - 0.5 \right| + B - 0.5
	\right), 
	\quad B>0, 
	\ j=0,\ldots, J-1. 
	$$
Then, the entries $c_k$ of the circulant matrix $\BF{C}$ 
were determined by the discrete Fourier inverse transform (IFFT)
of the eigen/singular values \cite{Davis79}, i.e., (see, Figure~\ref{fig-circ-singularvalue}(b)) 
	$$
	c_k =  \texttt{ifft}(\sigma_j) = \frac{1}{J} \sum_{j=0}^{J-1} \sigma_j 
	\exp \left(\frac{2\pi i j k}{J}\right). 
	$$
The very large scale vector $[c_k]_k$ represented in TT format can be efficiently computed 
by the {QTT-FFT} algorithm \cite{DolKhoSav2012}, 
which is available in TT-Toolbox \cite{Ose2014}. 
By construction, the matrix $\BF{A}$ has the same singular values 
as $\BF{C}$. We note that $\BF{A}$ is ill-conditioned
when $B\leq 0.5$.

\begin{figure}
\centering
\begin{tabular}{cc}
\includegraphics[width=6cm]{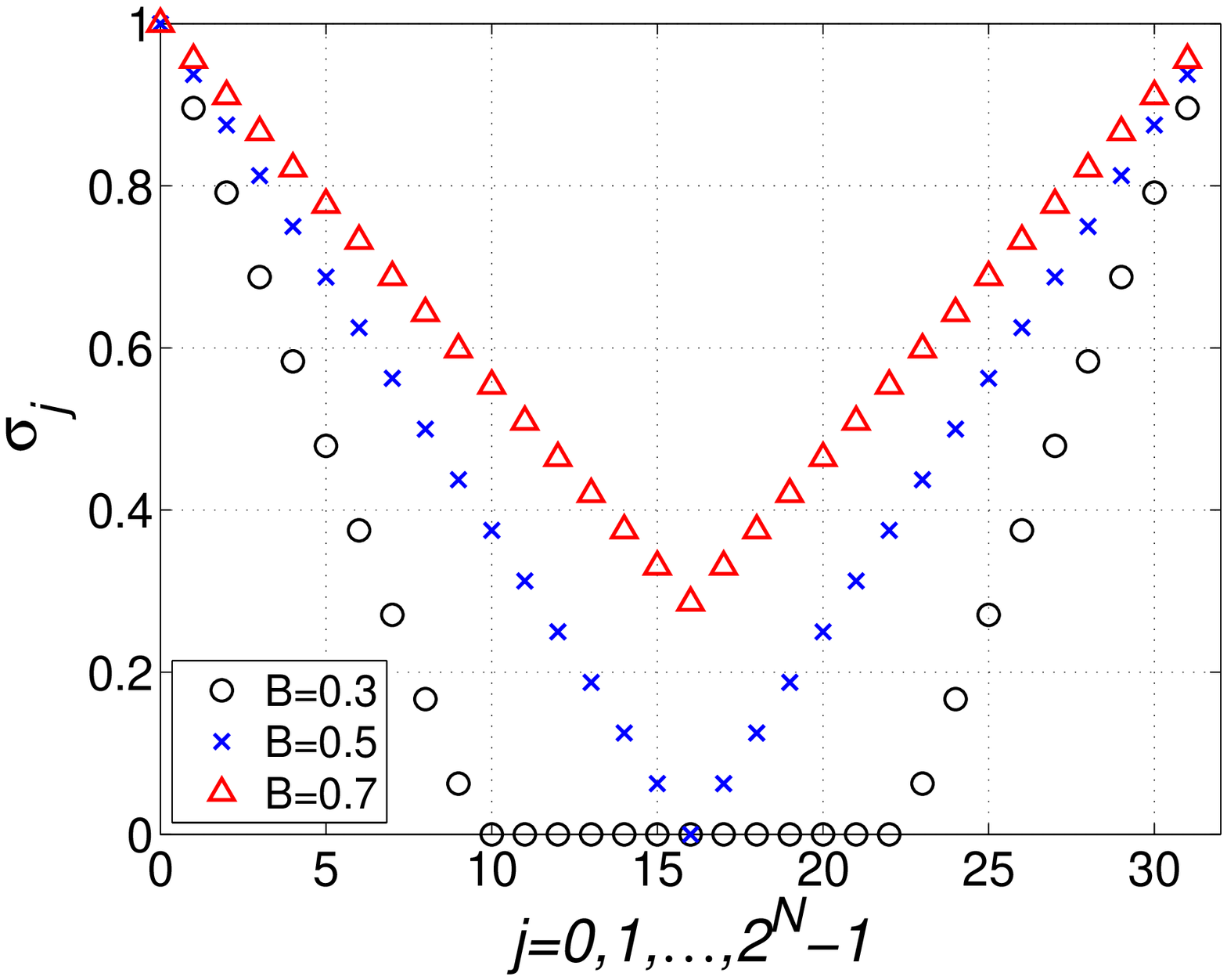}& 
\includegraphics[width=6cm]{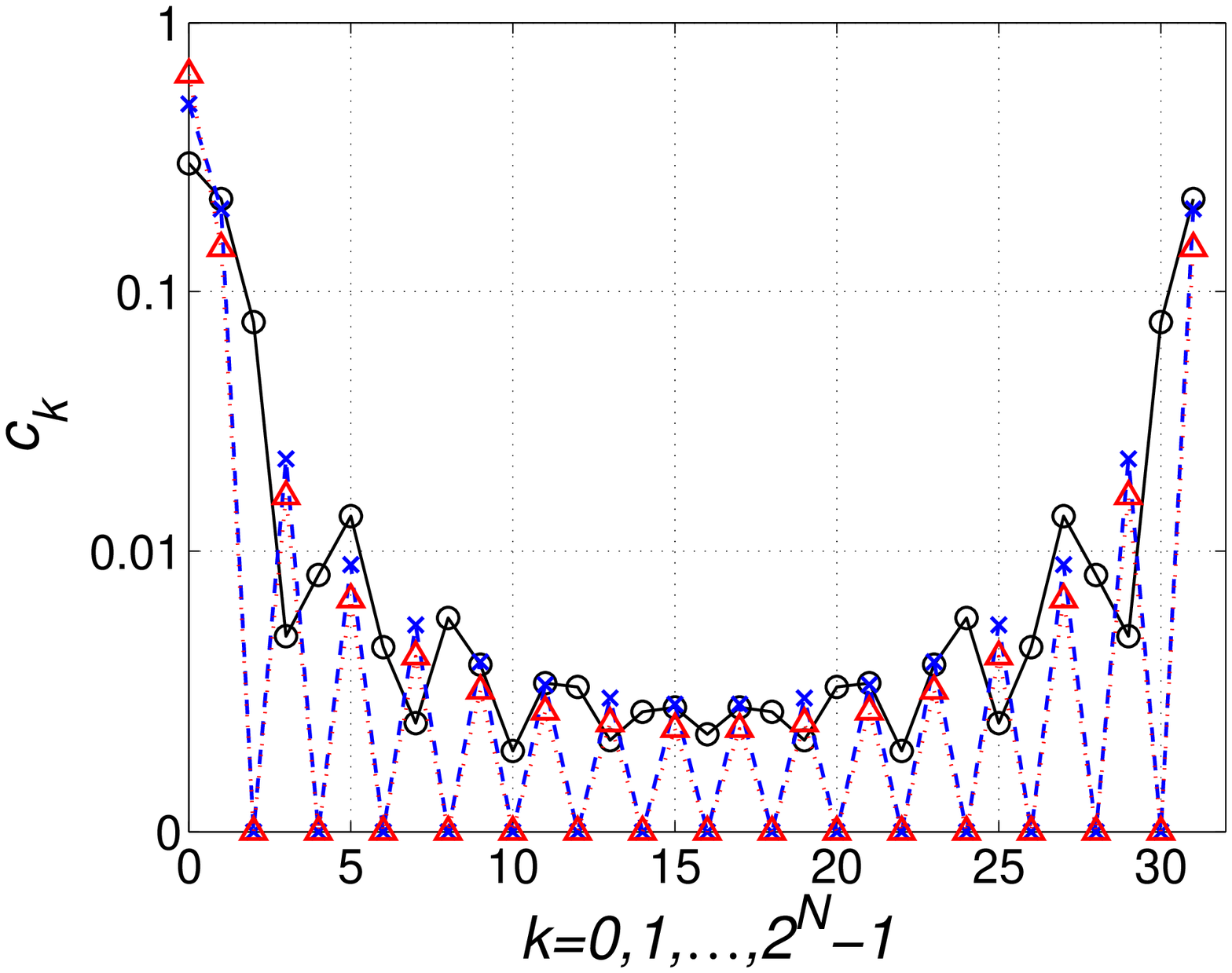}\\
(a) $\sigma_j$ &(b) $c_k$
\end{tabular}
\caption{\label{fig-circ-singularvalue}
(a) The (unordered) prescribed eigen/singular values $\sigma_j$ 
of the $2^{N}\times 2^N$ circulant matrix $\BF{C}$ and
(b) the entries $c_k$ of $\BF{C}$,  with 
$N={5}$ and various values of $B=0.3,0.5,0.7$. 
The rectangular matrix $\BF{A}\in\BB{R}^{2^{N+1}\times 2^N}$
has the same singular values as $\BF{C}$. }
\end{figure}


Figure~\ref{Fig:Circ_Error_vs_iter} illustrates that the 
relative residual decreases monotonically as the iteration proceeds. 
From Figure~\ref{Fig:Circ_Error_vs_iter}(a), we can see that the
proposed algorithm converges relatively fast within one or two 
full sweeps (1 full sweep is equal to $2(N-2)$ iterations)
in the case that $\lambda>0$, and the TT-ranks of the 
estimated pseudoinverse also remain at low values. 
On the other hand, without regularization (i.e.,  $\lambda=0$), 
we can see that the convergence is slow, and the TT-ranks also 
grow very quickly during the iteration process.

\begin{figure}
\centering
\begin{tabular}{cc}
\includegraphics[width=6cm]{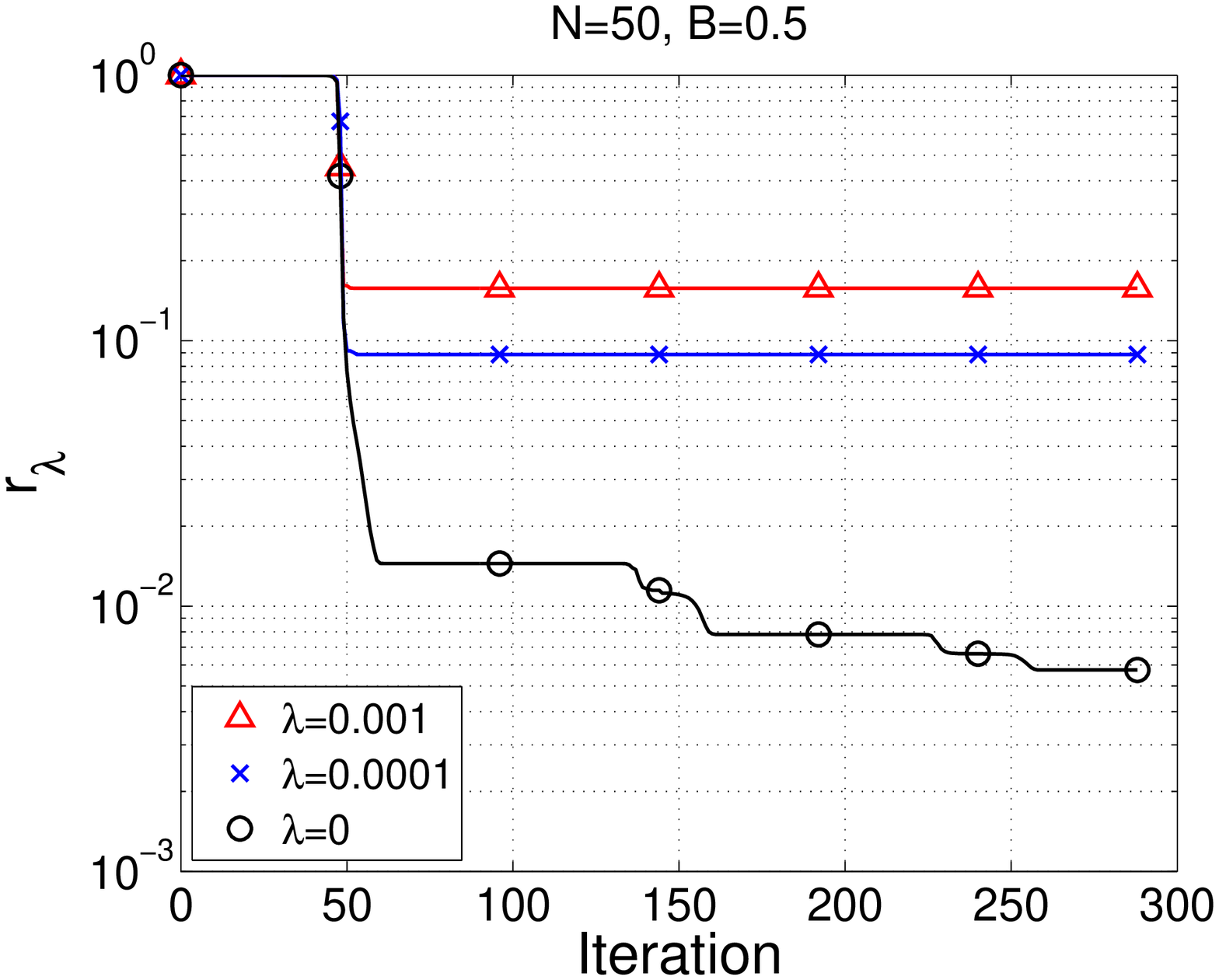}&
\includegraphics[width=6cm]{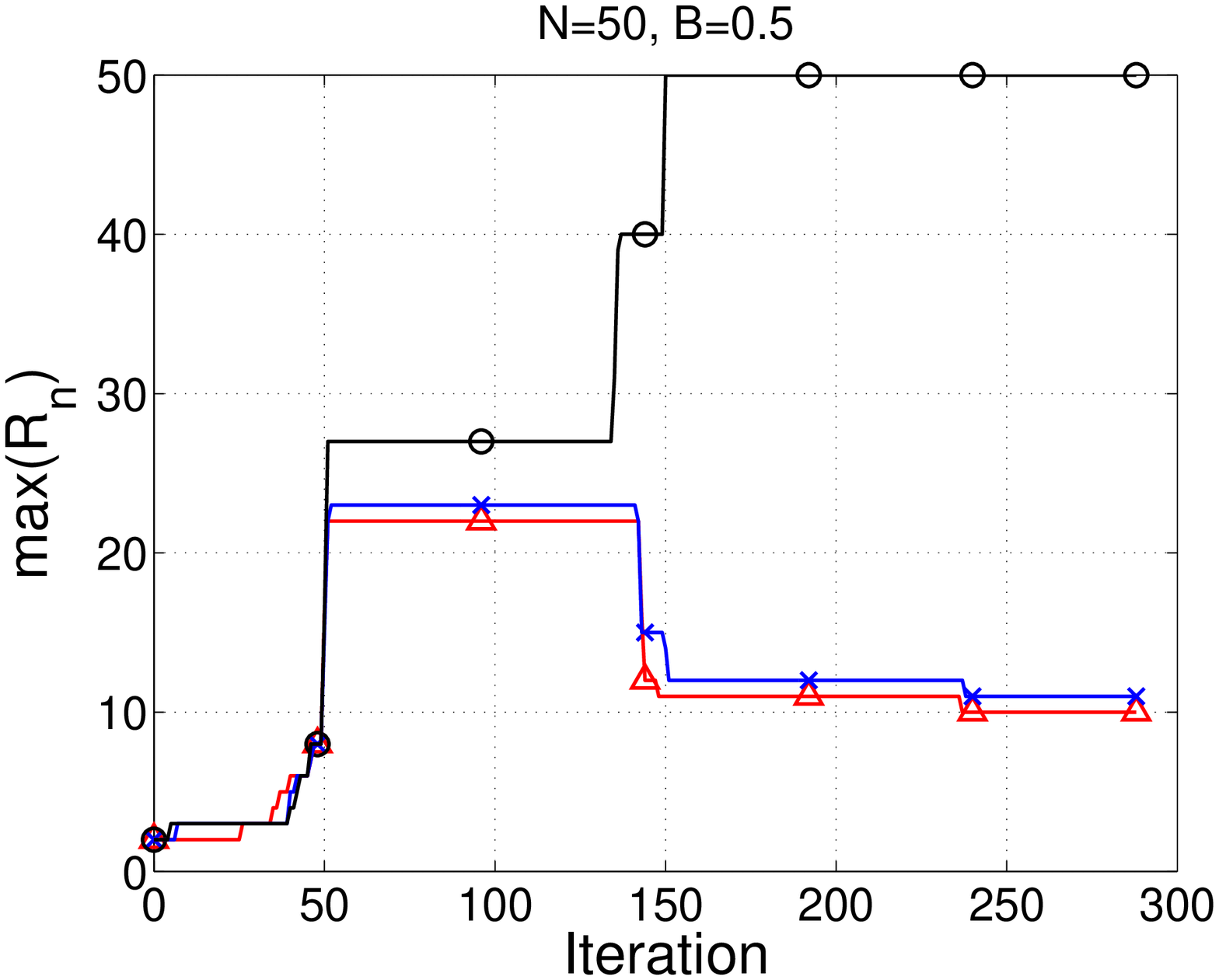}
\\(a) Relative residual  &(b) Maximum of TT-ranks
\end{tabular}
\caption{\label{Fig:Circ_Error_vs_iter}
Convergence of the proposed MALS algorithm for various values of 
the regularization parameter $\lambda$ for the $2^{N+1}\times 2^N$ 
rectangular circulant matrices with prescribed singular values
and $N=50, B=0.5$. 
(a) Relative residual versus iteration, and 
(b) maximum of the TT-ranks of the estimated pseudoinverse 
versus iteration. 
The markers on the lines indicate half-sweeps, i.e., every $N-2$ iterations.
}
\end{figure}

For comparison of the computational costs, we set the tolerance parameter at $\epsilon=0.2$. Figure~\ref{fig-circ_computation_BothMALS} illustrates the computational costs and the estimated TT-ranks by the proposed MALS algorithm (MALS-PINV) and the standard MALS algorithm (Std MALS), for various values of $20\leq N\leq 100$ and $B\in\{0.3,0.5,0.7\}$. Some values of the computational time are not displayed in the figure if it was larger than 360 seconds. 
We can  see that the computational costs increased 
only logarithmically with the matrix size $2^{N+1}\times 2^N$. 
The computation time of the proposed MALS algorithm 
was much smaller than that of the standard MALS algorithm (typically, by one or even two orders) as analyzed theoretically in Section~\ref{sec_comput_compl}
and in the first paragraph of Section~\ref{sec_num_simulation}. 
Moreover, the computation time for the regularized optimization
with $\lambda>0$ was shorter than that for the optimization 
without regularization, i.e., $\lambda=0$. 
In addition, for the case of ill-conditioned matrices with either 
$B=0.3$ or $B=0.5$, no regularization with $\lambda=0$
resulted in large estimated TT-ranks and high computational
costs, whereas the regularization with $\lambda>0$
significantly reduced estimated TT-ranks and 
also computational costs.

\begin{figure}
\centering
\begin{tabular}{ccc}
\includegraphics[width=3.9cm]{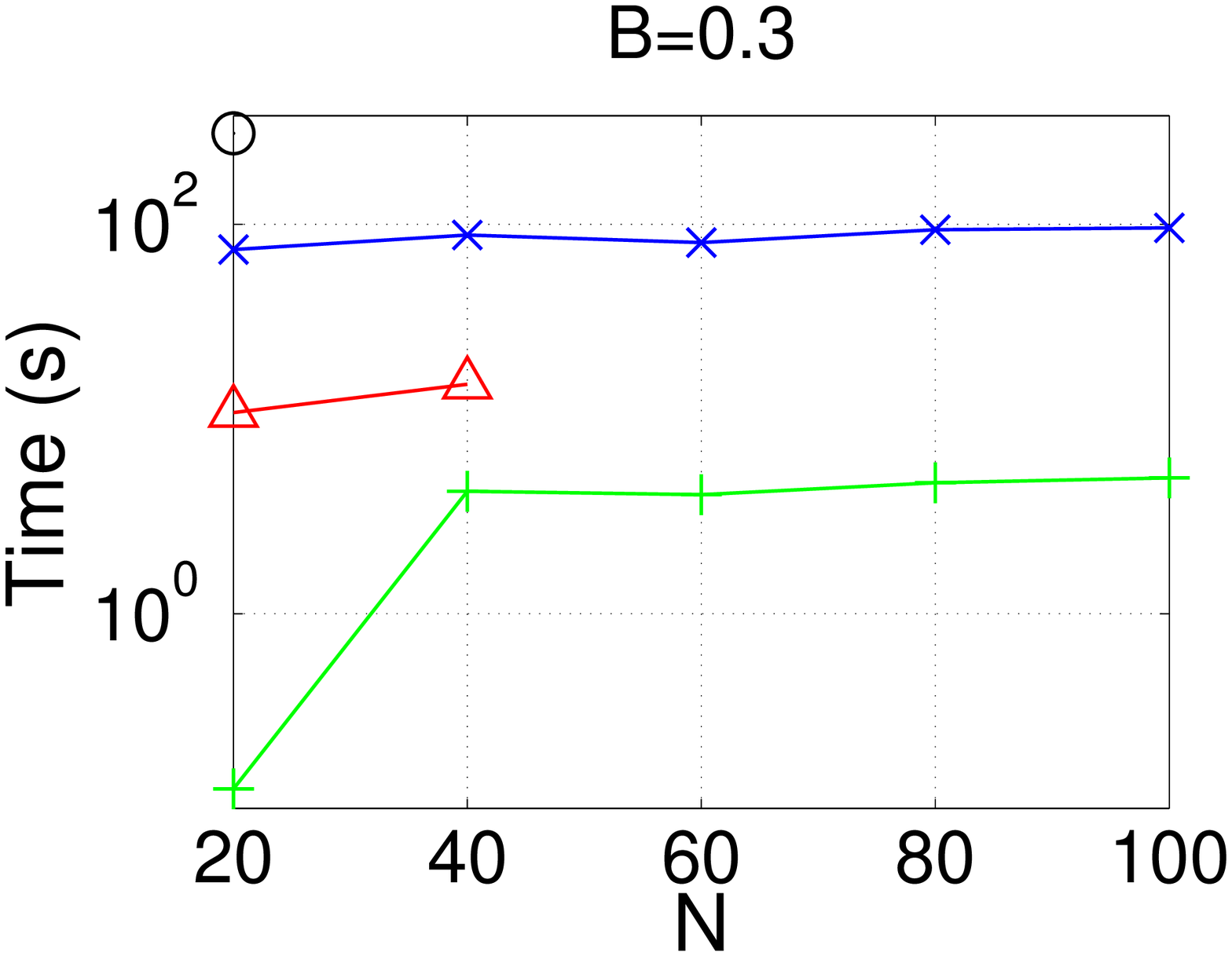}&
\includegraphics[width=3.9cm]{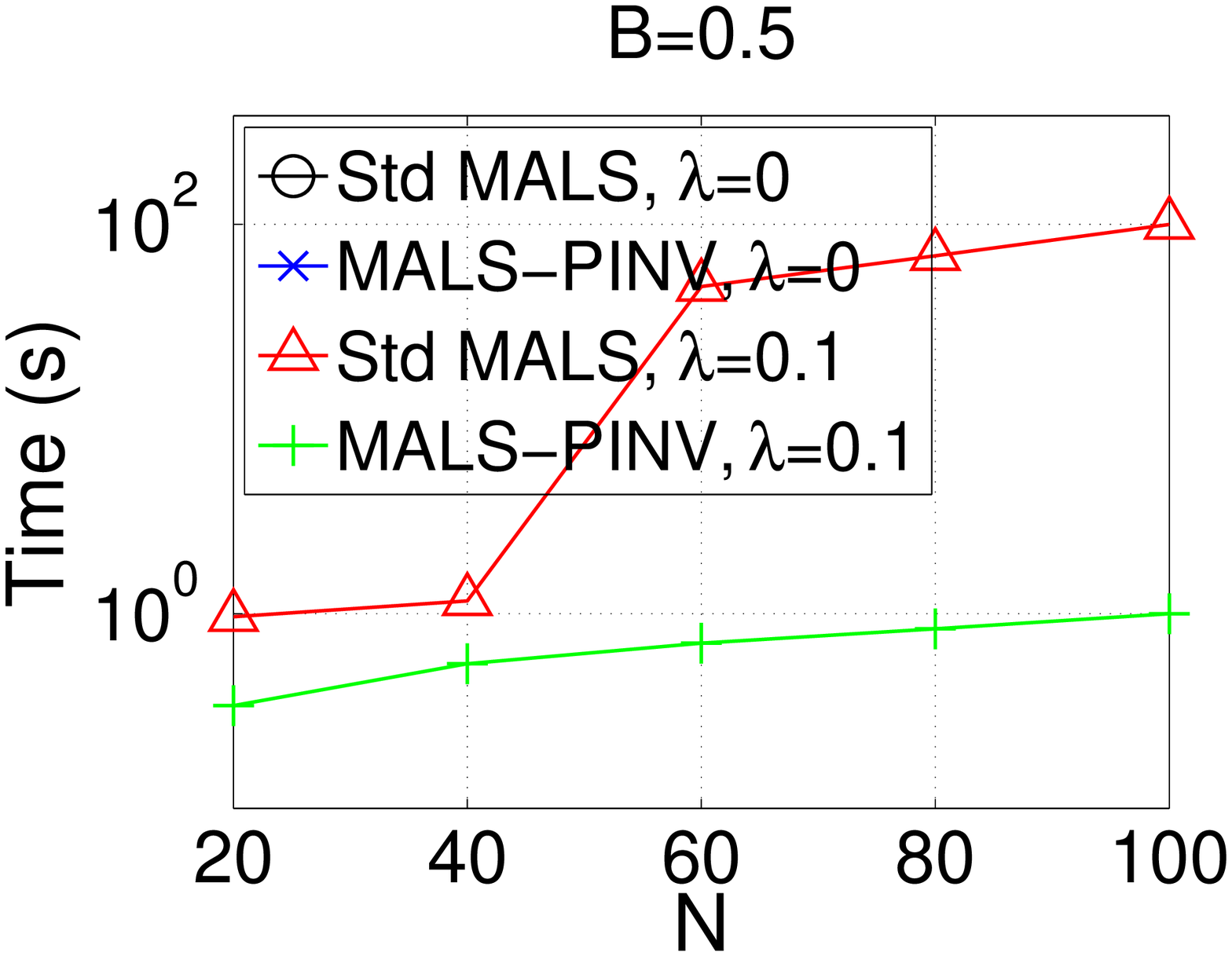}&
\includegraphics[width=3.9cm]{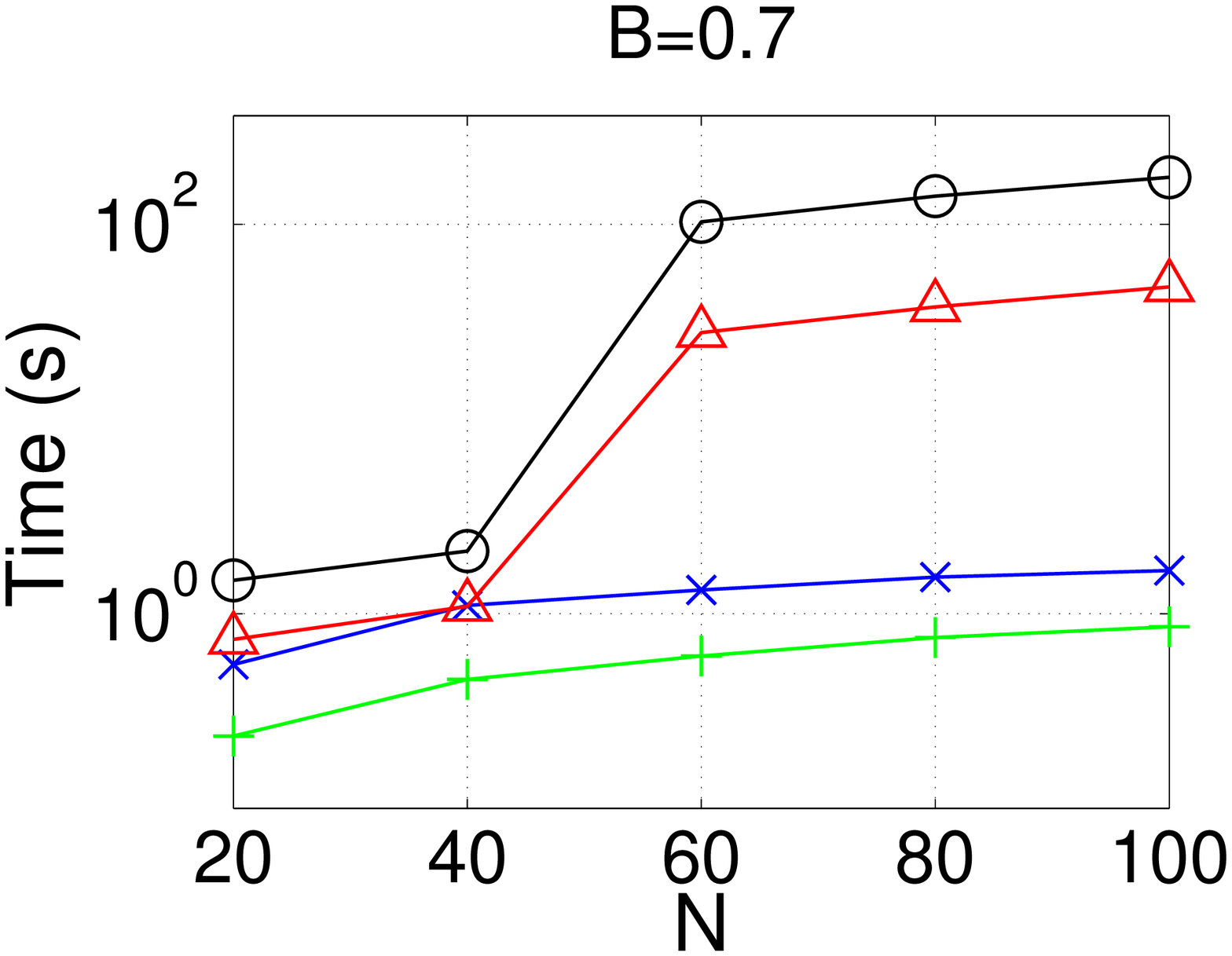}\\
\includegraphics[width=3.9cm]{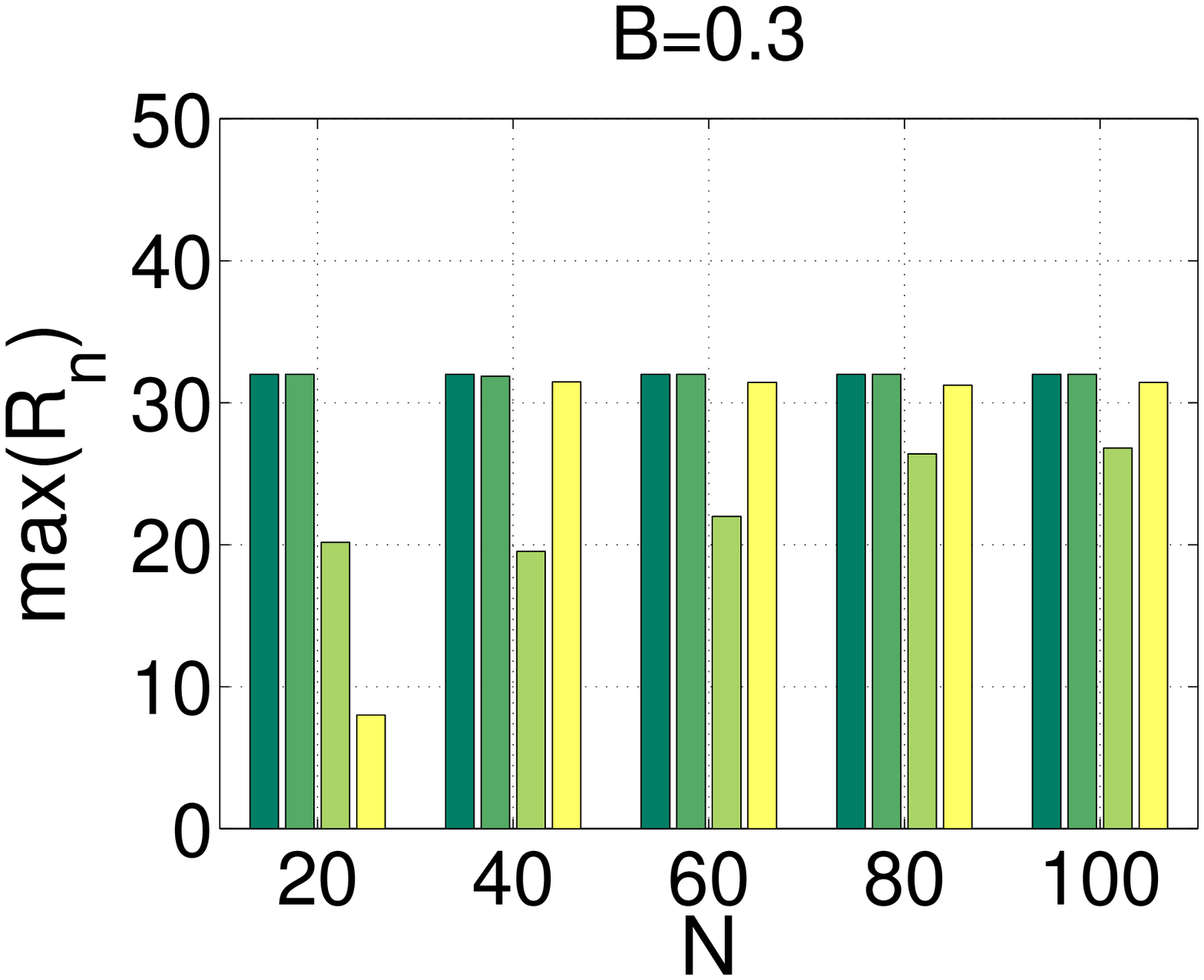}&
\includegraphics[width=3.9cm]{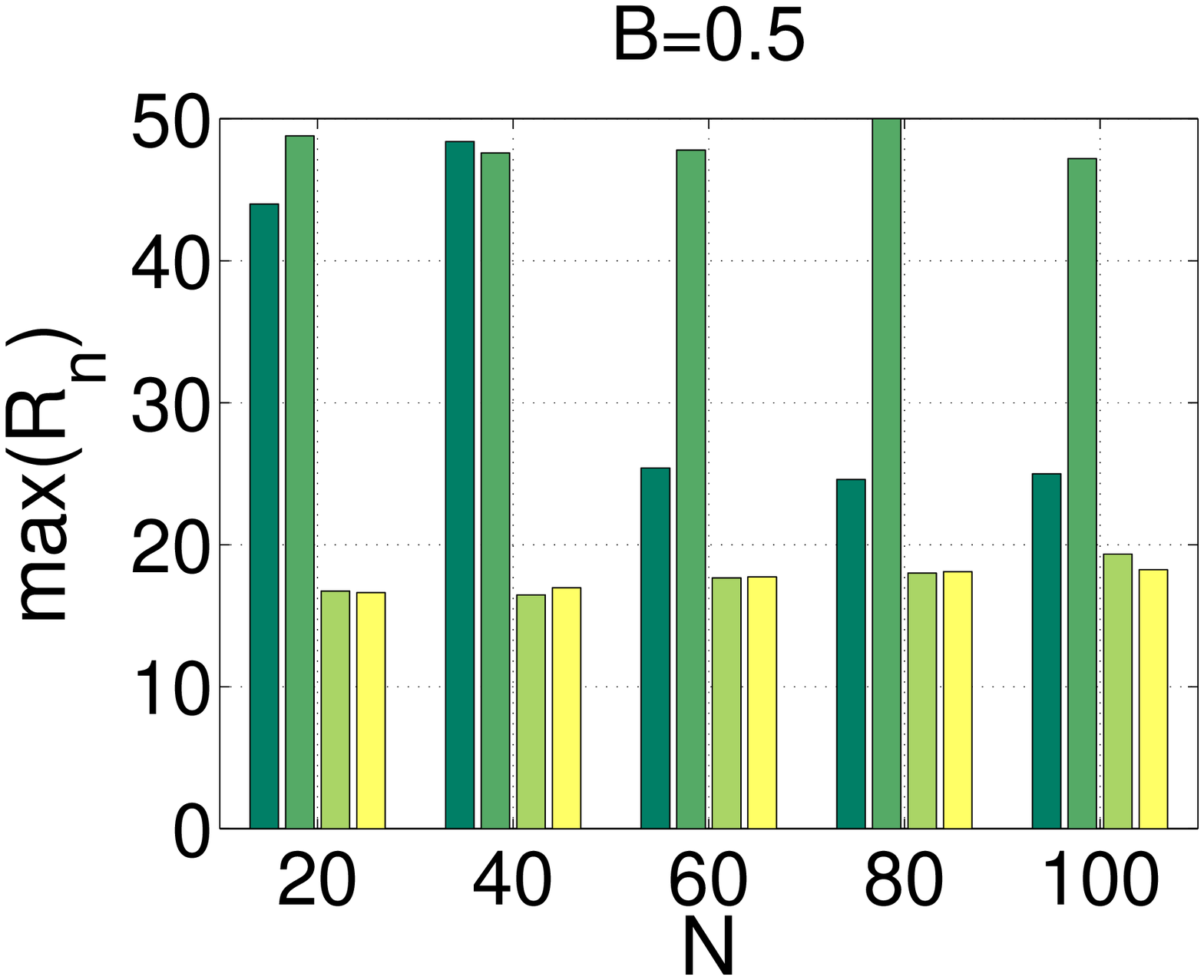}&
\includegraphics[width=3.9cm]{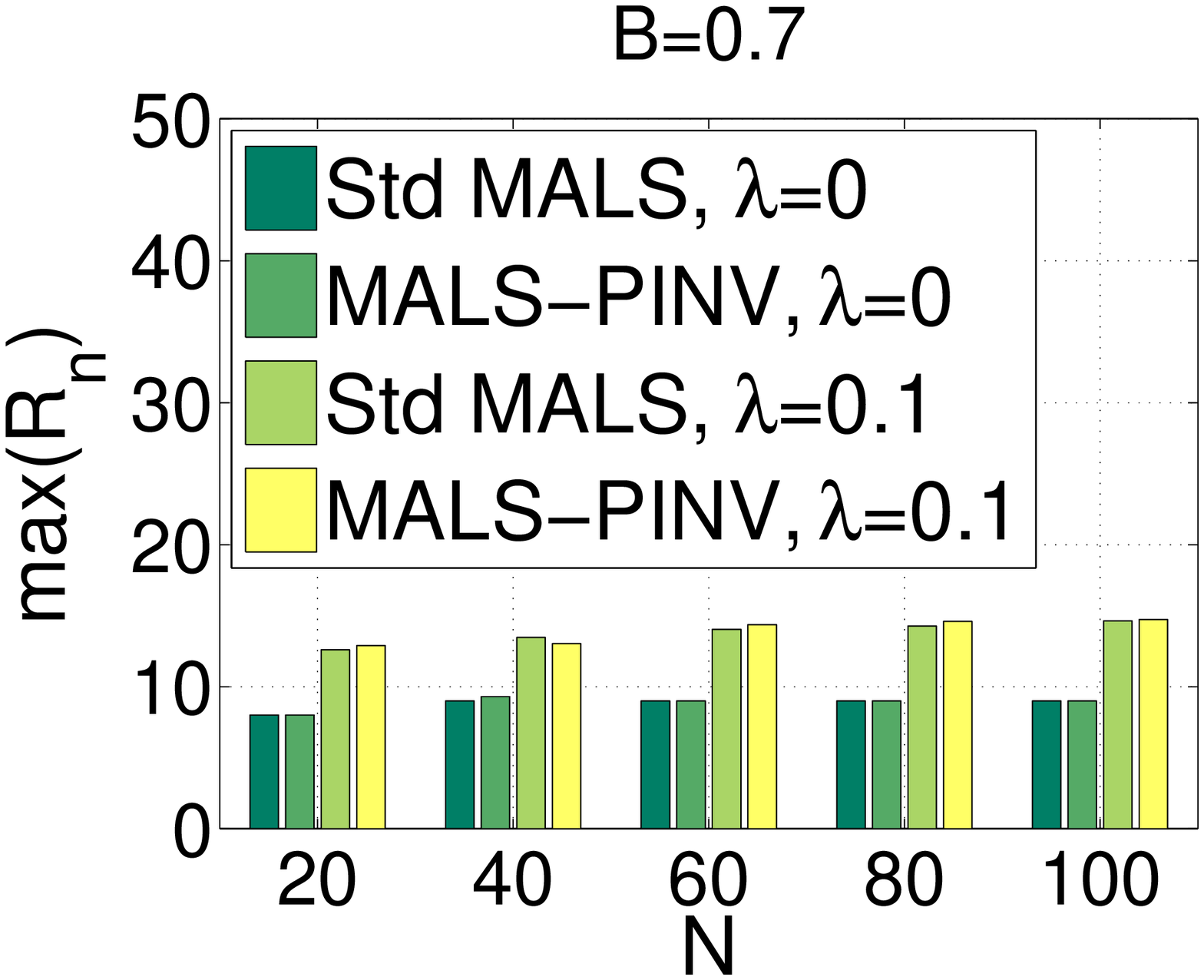}\\
(a) $B=0.3$ & (b) $B=0.5$ & (c) $B=0.7$ \\
\end{tabular}
\caption{\label{fig-circ_computation_BothMALS}
Computational cost versus $N$ (top row)
and maximum of the estimated TT-ranks versus $N$
(bottom row), for the $2^{N+1}\times 2^N$ rectangular circulant matrices with various values of 
$20\leq N\leq 100$, $B\in\{0.3,0.5,0.7\}$, and $\lambda\in\{0,0.1\}$. 
Some values of the computational time are not displayed in the figure if it was larger than 360 seconds. Std MALS means the standard MALS method applied for solving the linear system \eqref{std_mals_equation}, and MALS-PINV means the proposed MALS method.  
}
\end{figure}

\subsection{Example 2: Randomly Generated Matrices with Prescribed Singular Values}

Nonsymmetric matrices $\BF{A}\in\BB{R}^{2^{N}\times 2^N}$
were constructed in $N$-dimensional matrix TT format using
	$$
	\BF{A} = \BF{U\Sigma V}^\RM{T},
	$$
where $\BF{\Sigma}=\text{diag}(\sigma_0,\sigma_1,\ldots,\sigma_{2^N-1})$
is a diagonal matrix of singular values, and $\BF{U}$ and $\BF{V}$ are $2^N\times 2^N$ orthogonal matrices with
TT-ranks equal to 1, i.e.,
	$$
	\BF{U} = \BF{U}^{(1)} \otimes \cdots \otimes \BF{U}^{(N)},
	\quad
	\BF{V} = \BF{V}^{(1)} \otimes \cdots \otimes \BF{V}^{(N)},
	$$
and $\BF{U}^{(n)}\in\BB{R}^{2\times 2}$ and $\BF{V}^{(n)}\in\BB{R}^{2\times 2}$
are randomly generated orthogonal matrices.
The singular values were determined by
	$$
	\sigma_j = 10^{-\frac{j}{JK_0}},
	\quad
	j=0,1,\ldots,2^N-1,
	$$
for fixed $0 < K_0 \leq 1$.
The largest singular value is equal to 1, and the singular values decay to zero
as the index $j$ increases in a rate determined by $K_0$.
The TT-ranks of the matrix $\BF{A}$ and the inverse $\BF{A}^{-1}$ are the same as 
those of $\BF{\Sigma}$ and $\BF{\Sigma}^{-1}$, respectively, which are 
$R_n^A=R_n^{A^{-1}}=1$ $(1\leq n\leq N-1)$.

Figure~\ref{Fig:usv_Error_vs_it_K9} illustrates the convergence of the
proposed MALS algorithm for various values of the regularization parameter
$\lambda$ for the matrices with $N=50$ $(2^{50}\times 2^{50}\sim 10^{15}\times 10^{15})$ and $K_0=0.5$.
The convergence was monotonic, i.e., the relative residual was nonincreasing during the iteration process.
The larger $\lambda$ values resulted in larger relative residual values,
as described in Section~\ref{sec:relative_error}.
Since the matrix $\BF{A}$ is not ill-conditioned when $K_0=0.5$, 
the convergence was relatively fast, and the estimated
TT-ranks were small. In Figure~\ref{Fig:usv_Error_vs_it_K9}(b), 
when $\lambda=0$ the estimated TT-ranks were $R_n=1$,
which are equal to the true TT-ranks of the inverse $\BF{A}^{-1}$.

\begin{figure}
\centering
\begin{tabular}{cc}
\includegraphics[width=6cm]{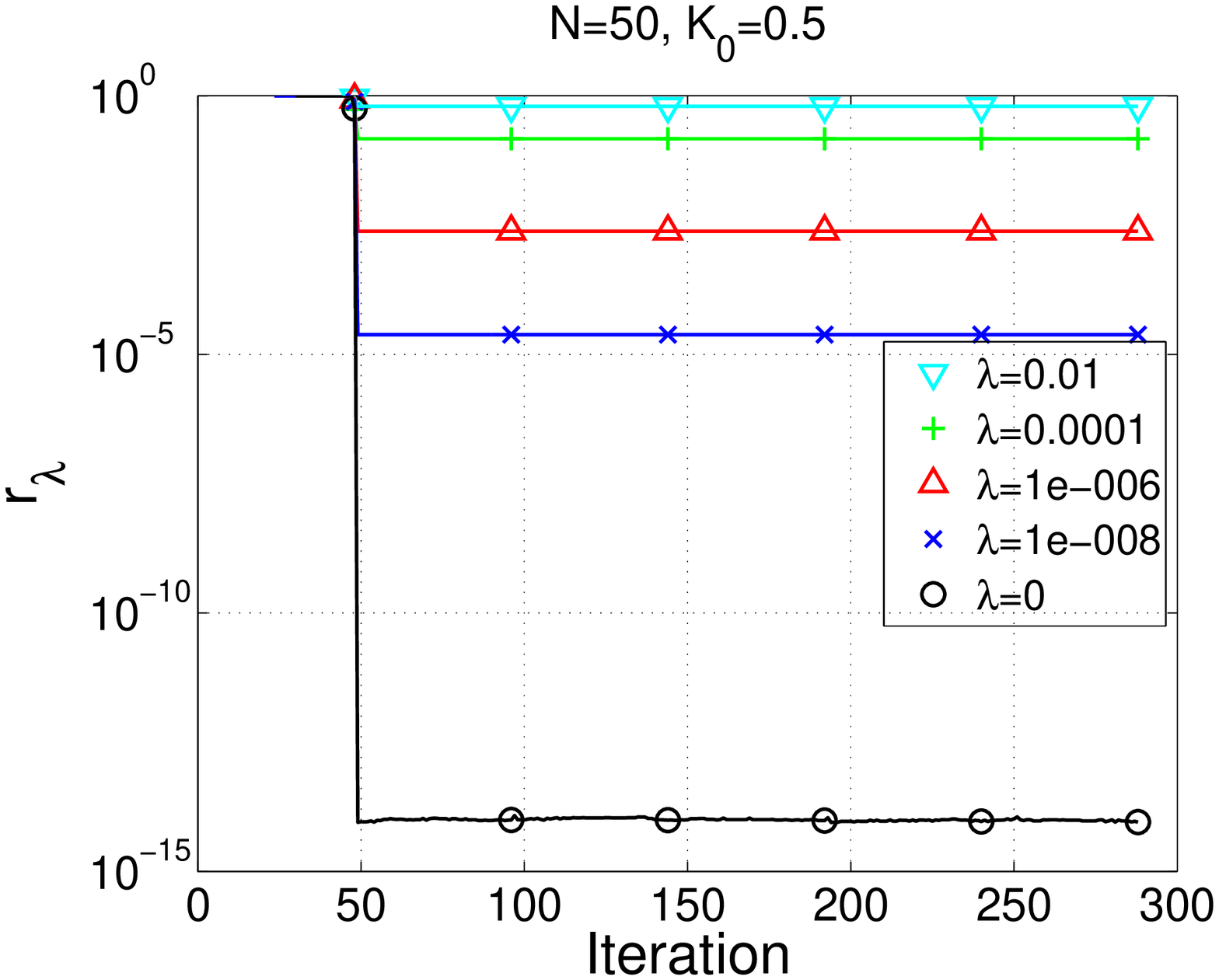}&
\includegraphics[width=6cm]{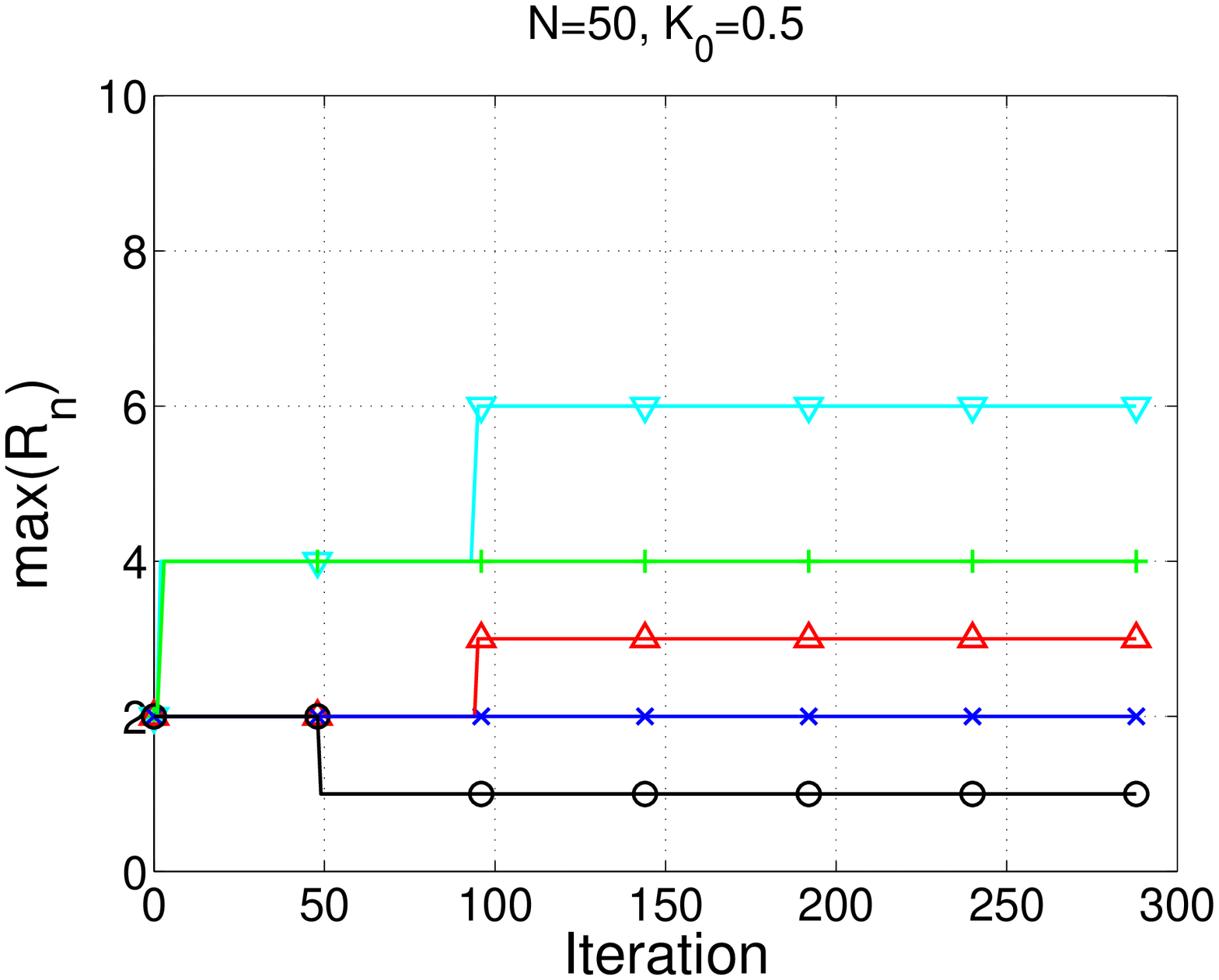}\\
(a) Relative residual & (b) Maximum of TT-ranks 
\end{tabular}
\caption{\label{Fig:usv_Error_vs_it_K9}Convergence of the
MALS algorithm for various values of the regularization parameter
 $\lambda$ for the $2^N\times 2^N$ randomly generated matrices with prescribed 
 singular values and $N=50$ and $K_0=0.5$.
(a) Relative residual versus iteration, and 
(b) maximum of the TT-ranks of the estimated pseudoinverse 
versus iteration. 
The markers on the lines indicate half-sweeps, i.e., every $N-2$ iterations.
}
\end{figure}

Figure~\ref{Fig:usv_Error_vs_K_L1}(a) shows that the obtained minimal
relative residual values were not different for various values
of $N$, but they were different over various values of $K_0$.
In addition, Figure~\ref{Fig:usv_Error_vs_K_L1}(b) shows that the
relative residual values were different for various values of
the regularization parameter $\lambda$. The simulation results
illustrated in Figures~\ref{Fig:usv_Error_vs_K_L1}(a) and (b)
are consistent and in agreement with the analysis in
Section~\ref{sec:relative_error}.

\begin{figure}
\centering
\begin{tabular}{cc}
\includegraphics[width=6cm]{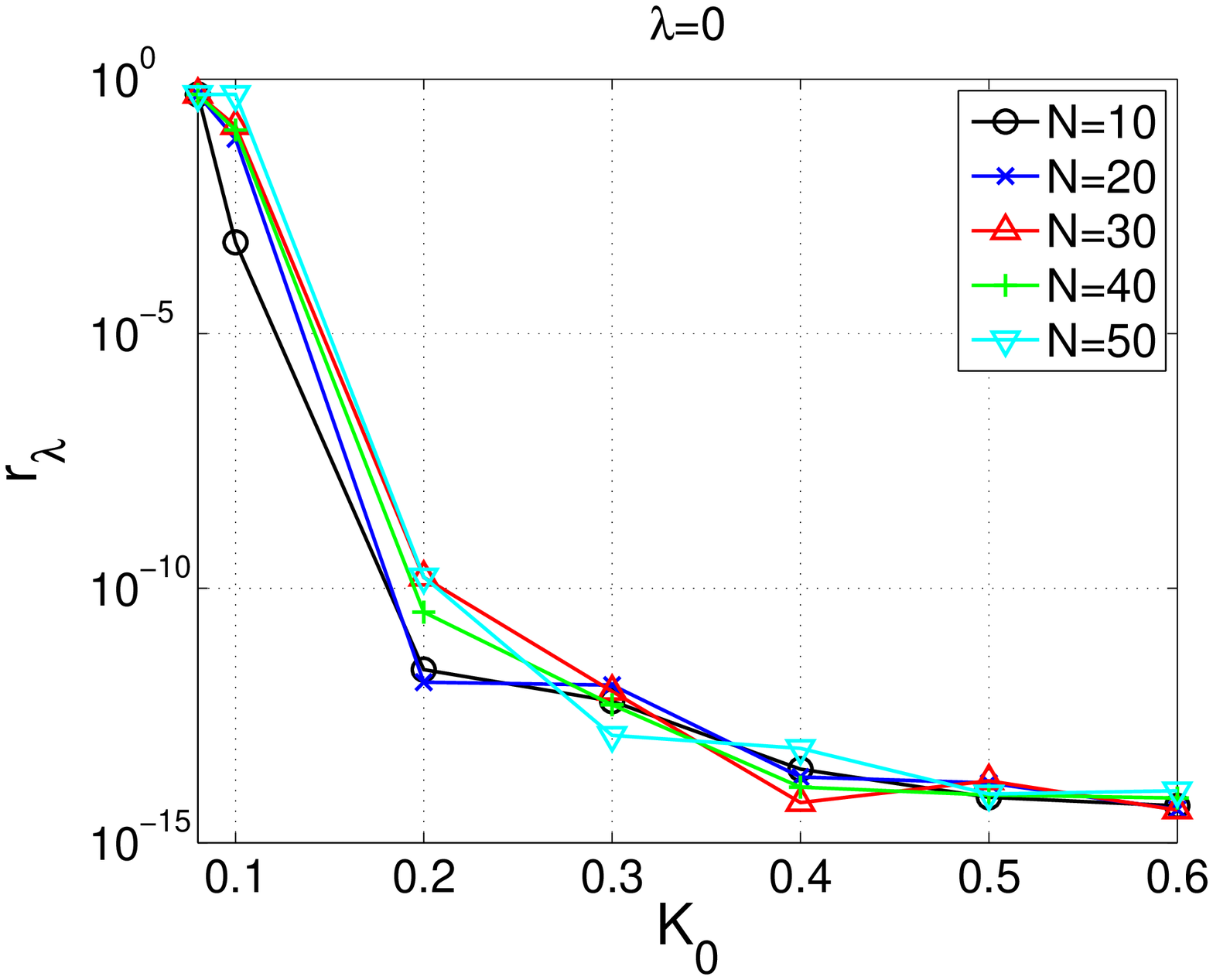} &
\includegraphics[width=6cm]{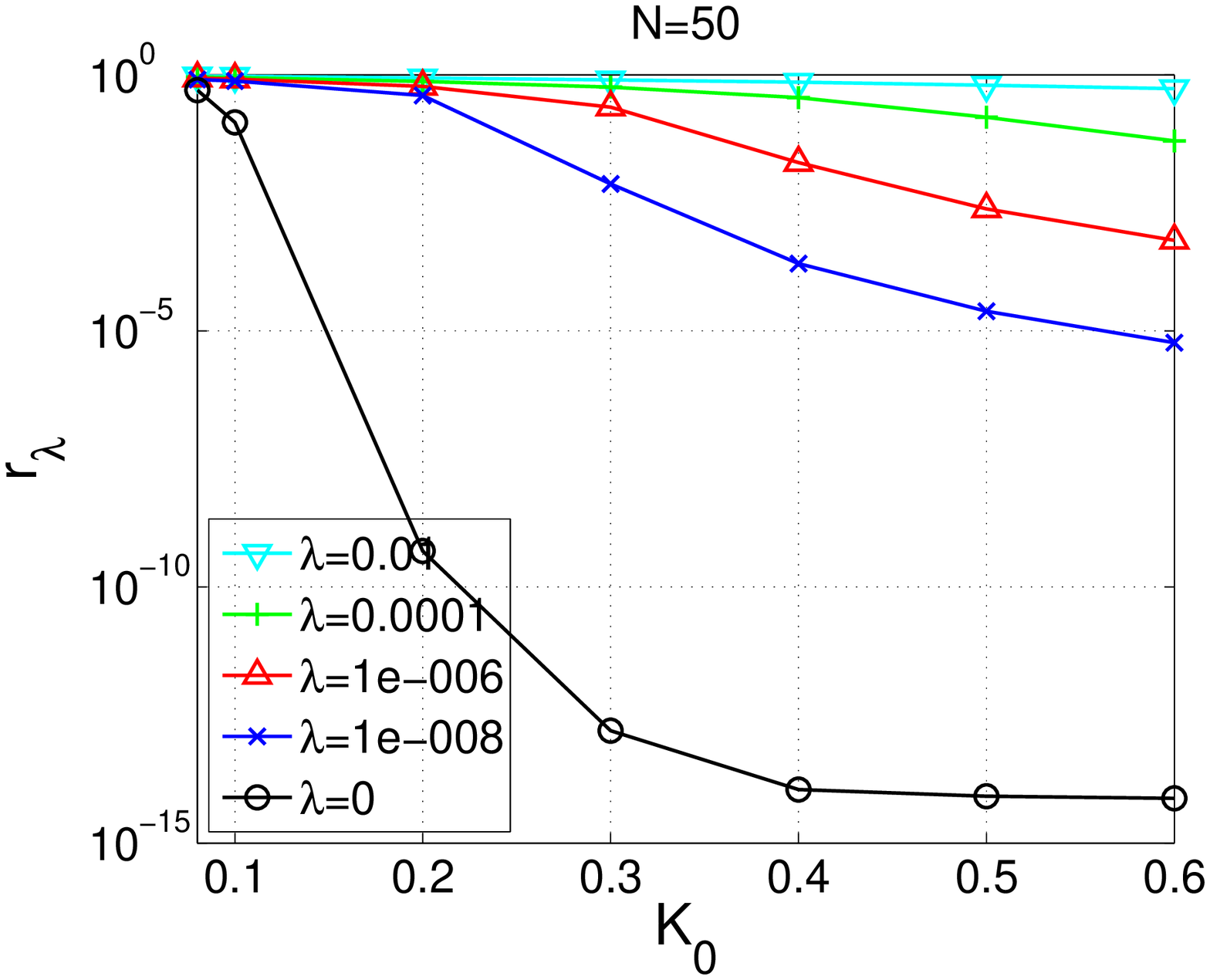} \\
(a) $\lambda=0$, $N\in\{10,\ldots,50\}$  & (b) $\lambda=\{0,\ldots,0.01\}$, $N=50$
\end{tabular}
\caption{\label{Fig:usv_Error_vs_K_L1} (a) Relative residual ($r_\lambda$) versus $K_0$
for various values of $N$, and (b) relative residual ($r_\lambda$) versus $K_0$
for various values of the regularization parameter $\lambda$, for the $2^N\times 2^N$ randomly generated matrices with prescribed singular values. 
}
\end{figure}

\subsection{Example 3: Laplace Operator}

The 1-D discrete Laplace operator of size $2^N\times 2^N$
with Dirichlet-Dirichlet boundary condition is considered \cite{Kaz2012}:
	$$
	\BF{A} = \text{tridiag}(-1,2,-1) \in\BB{R}^{2^N\times 2^N}.
	$$
The matrix $\BF{A}$ is square and symmetric, and its explicit TT representation and TT-ranks were already investigated and presented in \cite{Kaz2012}. The TT-ranks of the inverse $\BF{A}^{-1}$ are also known to be $(R_1,R_2,\ldots,R_{N-1})=(4,5,5,\ldots,5,4)$ \cite{Kaz2012}.

Figure~\ref{Fig:Laplace_Error_vs_it_L} illustrates the convergence
of the MALS algorithm for various values of $\lambda\in\{0, 10^{-6}, 10^{-4},
10^{-2}\}$ and a fixed $N=60$. The relative residual decreased monotonically, but the convergence was slow when the $\lambda$ value was small in the range $\lambda\in\{0,10^{-6}\}$. In Figure~\ref{Fig:Laplace_Error_vs_it_L}(b), for the small $\lambda$ values, 
the maximum of the estimated TT-ranks increased largely during the iteration process, whereas for the larger $\lambda$ values ($\lambda=10^{-4},10^{-2}$), the TT-ranks were relatively small. 
Note that the values $\lambda=10^{-4}, 10^{-2}$ are still relatively small compared to the diagonal entries of $\BF{AA}^\RM{T}$ in \eqref{expr:sol_global_regul}, which are 5 or 6. We can conclude that the regularization with a $\lambda>0$ value is necessary to obtain low-rank approximate pseudoinverses of ill-conditioned large-scale matrices.

\begin{figure}
\centering
\begin{tabular}{cc}
\includegraphics[width=6cm]{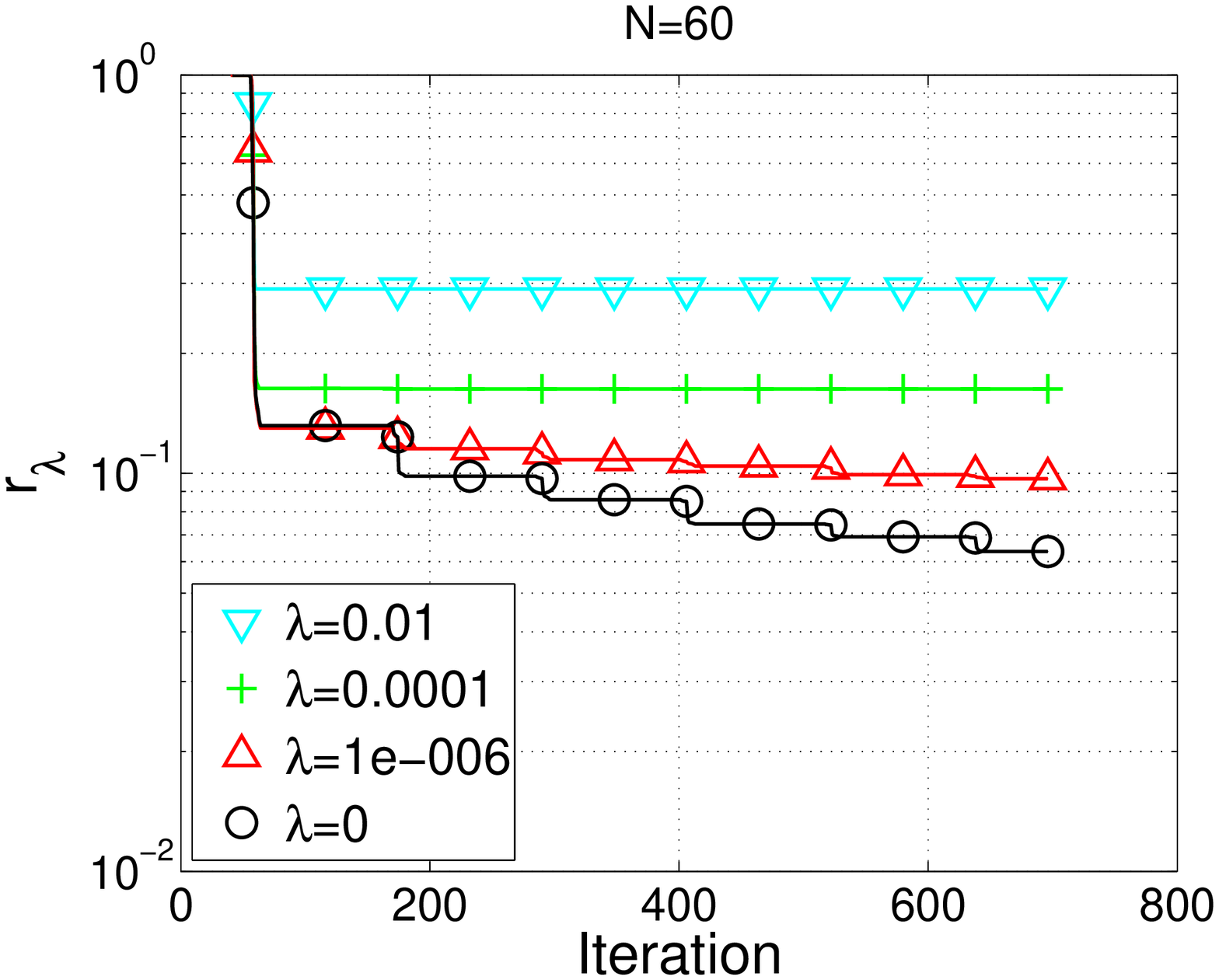} &
\includegraphics[width=6cm]{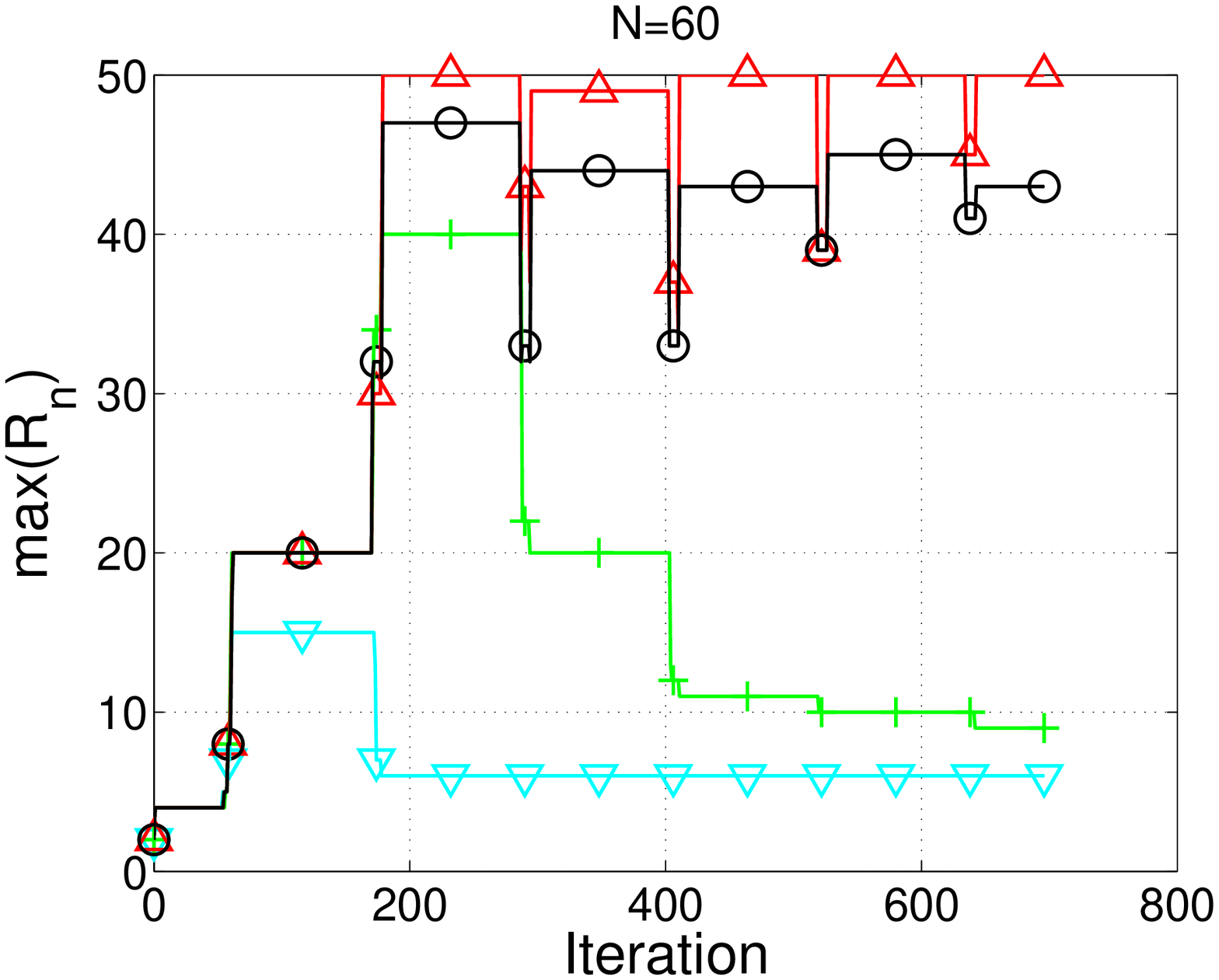} \\
(a) Relative residual & (b) Maximum of TT-ranks
\end{tabular}
\caption{\label{Fig:Laplace_Error_vs_it_L}
Convergence of the proposed MALS algorithm
illustrated by (a) relative residual ($r_\lambda$) and (b) maximum of the TT-ranks of the estimated pseudoinverses, for various values of the regularization parameter $\lambda$ for the $2^{60}\times 2^{60}$ discrete Laplace operator ($N=60$). The markers on the lines indicate half-sweeps, i.e., every $N-2$ iterations.
}
\end{figure}



The computational costs for the estimation of the pseudoinverses are illustrated in Figure~\ref{Fig:Laplace_Time_N} together with the estimated maximum TT-ranks. 
It is illustrated that the computational time increased logarithmically with the matrix size $2^N\times 2^N$, and the estimated TT-ranks were bounded over all $N$ values. Moreover, the proposed MALS algorithm needs shorter computational time than the standard MALS algorithm.

\begin{figure}
\centering
\begin{tabular}{cc}
\includegraphics[width=6cm]{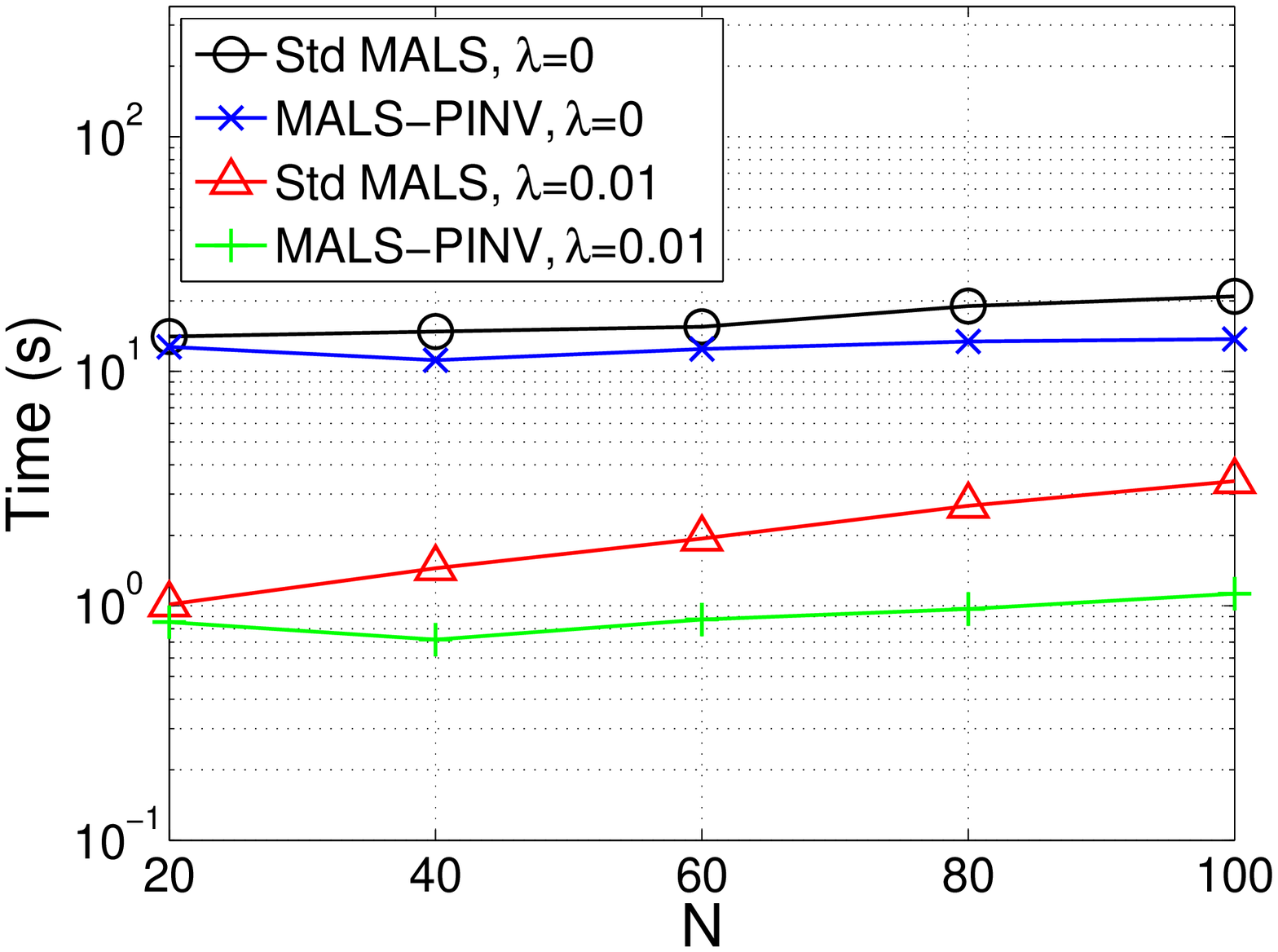} &
\includegraphics[width=6cm]{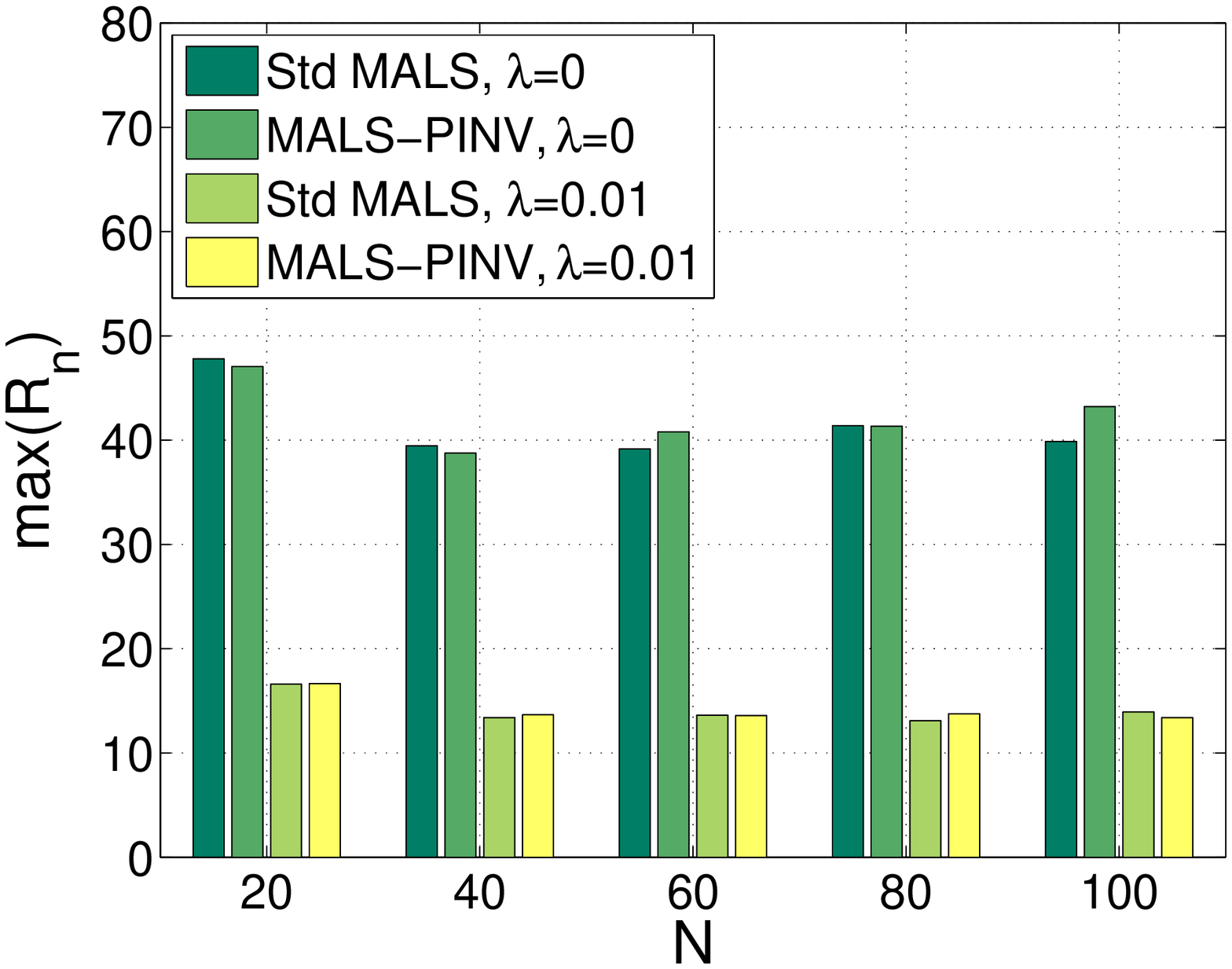} \\
(a) Computation time & (b) Maximum of TT-ranks
\end{tabular}
\caption{\label{Fig:Laplace_Time_N}(a) Computation time and
(b) maximum of the TT-ranks of the estimated pseudoinverse, for the $2^{N}\times 2^{N}$ discrete Laplace operators
for various values of the regularization parameter $\lambda\in\{0,10^{-2}\}$.
Std MALS means the standard MALS method applied for solving the linear system \eqref{std_mals_equation}, and MALS-PINV means the proposed MALS method.  
}
\end{figure}

\subsection{Example 4: Convection-Diffusion Equation}
\label{sec-simul-convection}

In order to demonstrate the effectiveness of the proposed algorithm
in preconditioning nonsymmetric systems of linear equations,
we consider the 3-D convection-diffusion equation on the unit cube $[0,1]^3$ described in \cite{Sonne2008}:
	\begin{equation} \label{con_diff_eqn}
	u_{xx} + u_{yy} + u_{zz} + cu_x = f,
	\end{equation}
where the function $f$ is defined by the solution
	\begin{equation} 
	u(x,y,z) = \exp(xyz)\sin(\pi x)\sin(\pi y)\sin(\pi z). 
	\end{equation}
By finite difference discretization, each axis is discretized by
$2^M+2$ grid points including the boundary points. The number of
equations is $2^M \cdot 2^M \cdot 2^M = 2^{3M} = 2^N$ with $N=3M$.
We set $c=2^{N-10}$. It is noted in \cite{Sonne2008} that the matrix $\BF{A}$ is a strongly nonsymmetric matrix, and it has eigenvalues with large imaginary parts, which slow the convergence of conjugate gradient-type algorithms such as Bi-CGSTAB.

First, the regularized inverses of the coefficient matrix were estimated by the proposed algorithm. Figure~\ref{Fig:Convection_convergence_D} illustrates the convergence of the proposed algorithm for three truncation parameter values: $\delta=2\cdot 10^{-3},2\cdot 10^{-5},$ and $2\cdot 10^{-7}$. 
We can see that smaller $\delta$ values result in a faster convergence per each iteration, but the TT-ranks also increase faster, which may cause high computational costs. On the other hand, a too large value of $\delta$ may result in large relative residuals. 
Since the TT-ranks of the estimated pseudoinverse influence the computational cost in the next step of solving a preconditioned system of linear equations, it is important to balance the approximation accuracy and
TT-ranks of the estimated pseudoinverse.


\begin{figure}
\centering
\begin{tabular}{cc}
\includegraphics[width=6cm]{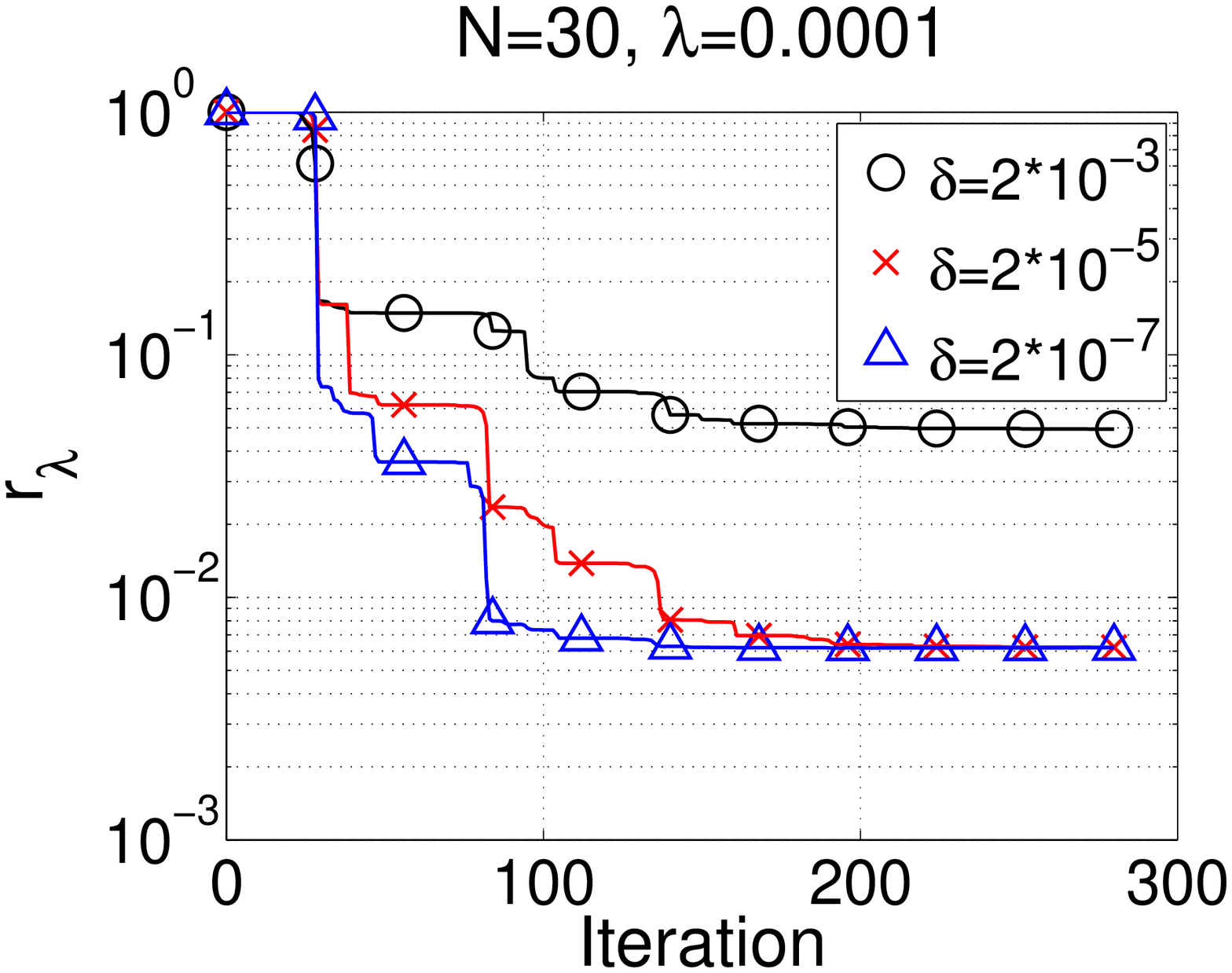} &
\includegraphics[width=6cm]{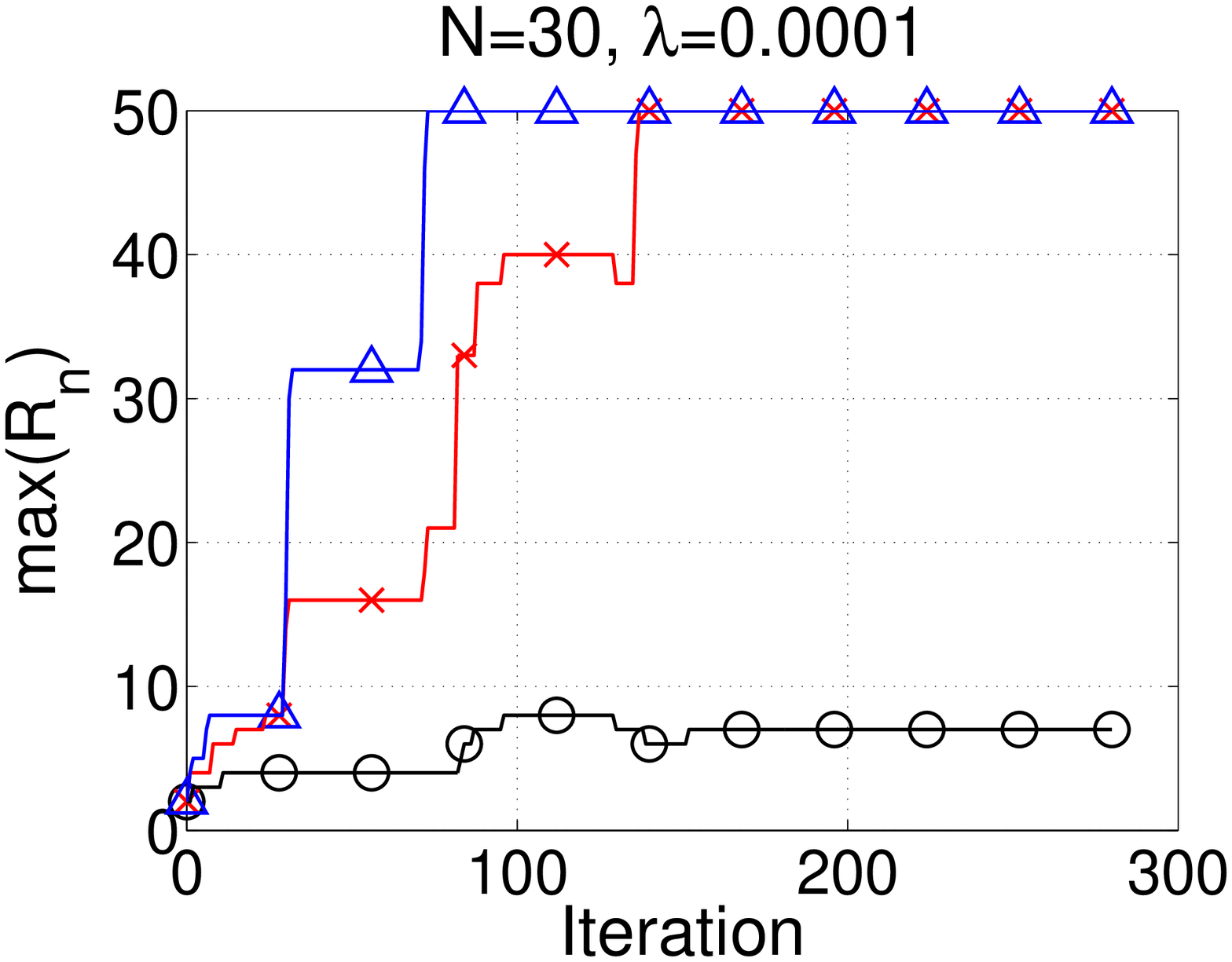}\\
(a) Relative residual & (b) Maximum of TT-ranks
\end{tabular}
\caption{\label{Fig:Convection_convergence_D}
Convergence of the proposed MALS algorithm for various values of the truncation parameter $\delta = 2\cdot 10^{-3}, 2\cdot 10^{-5}, 2\cdot 10^{-7}$
for the discretized convection-diffusion equation with
$N=30$ and $\lambda=10^{-4}$.
(a) Relative residual and (b) maximum of the TT-rank of the
estimated pseudoinverse. 
The markers on the lines indicate half-sweeps, i.e., every $N-2$ iterations.
}
\end{figure}

Figure~\ref{Fig_Convection_Preconditioner} illustrates the computation time for the estimation of the regularized pseudoinverses, for various values of $N$ and regularization parameter $\lambda$, when the truncation parameter was set at $\delta=10^{-4}(N-1)^{-1/2}$. Large $\lambda$ value, e.g.,  $\lambda=0.01$, resulted in relatively small TT-ranks and short computation time for $N=15,90$. 

\begin{figure}
\centering
\begin{tabular}{ccc}
\includegraphics[width=3.82cm]{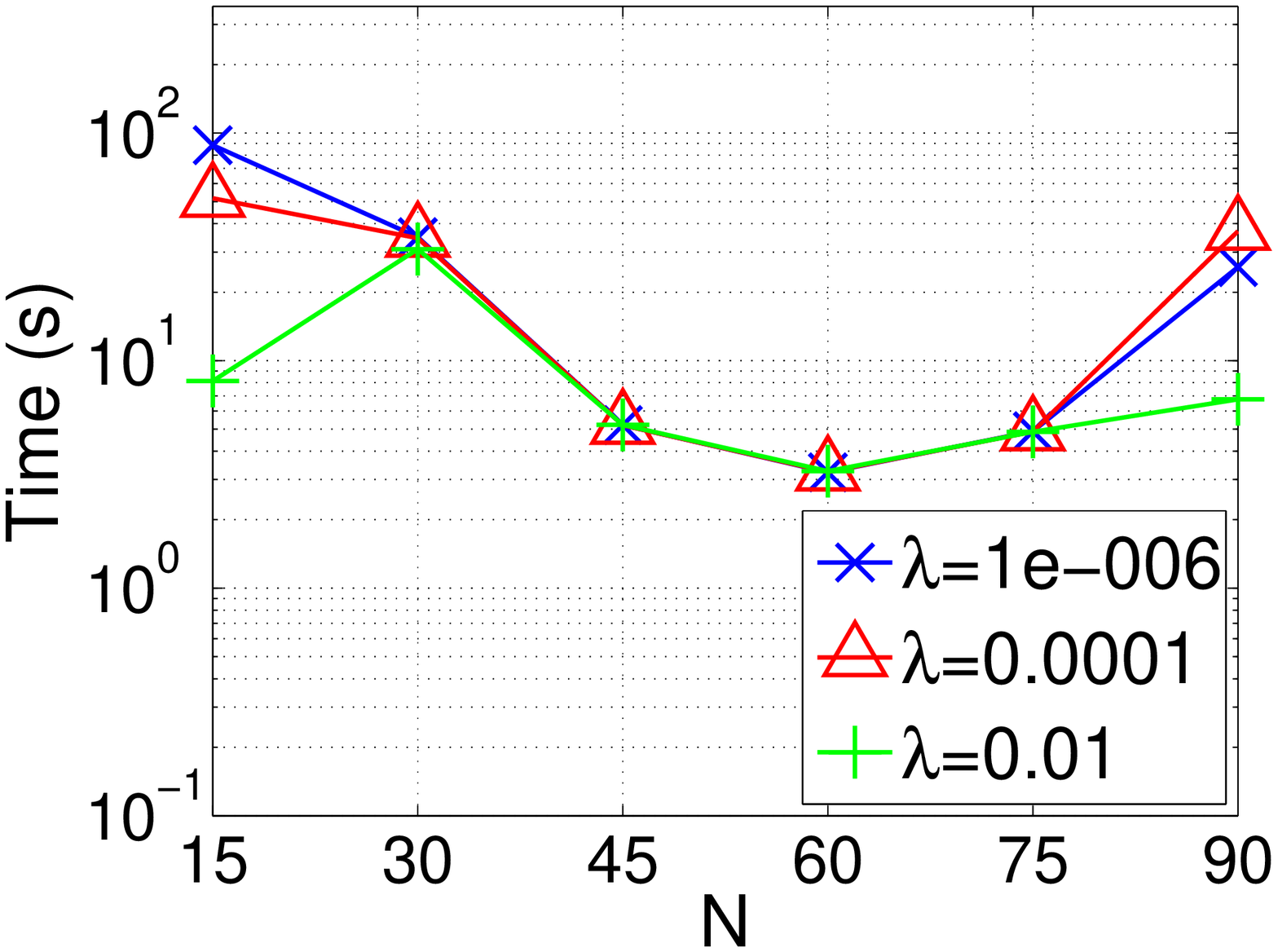} &
\includegraphics[width=3.82cm]{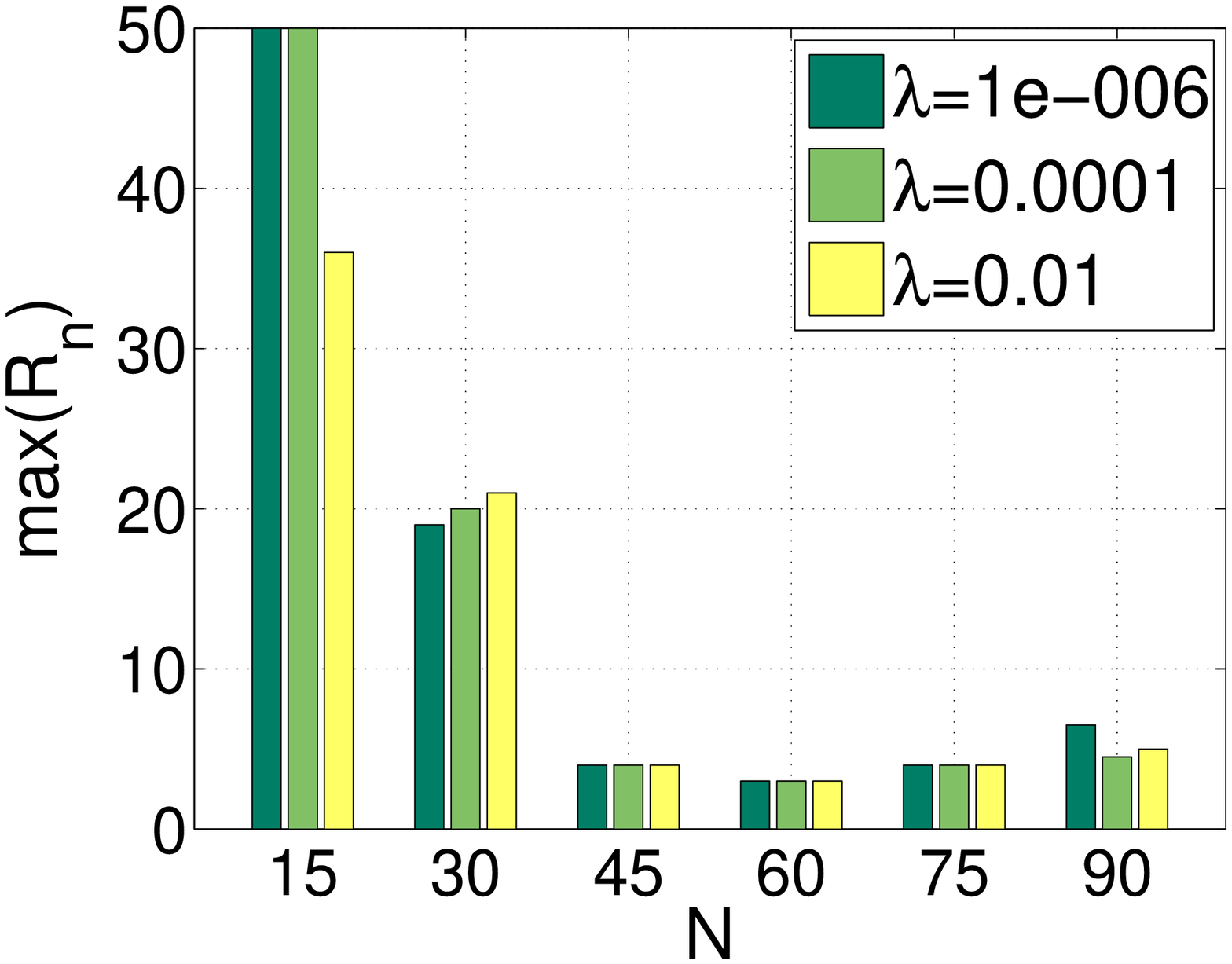} & 
\includegraphics[width=3.82cm]{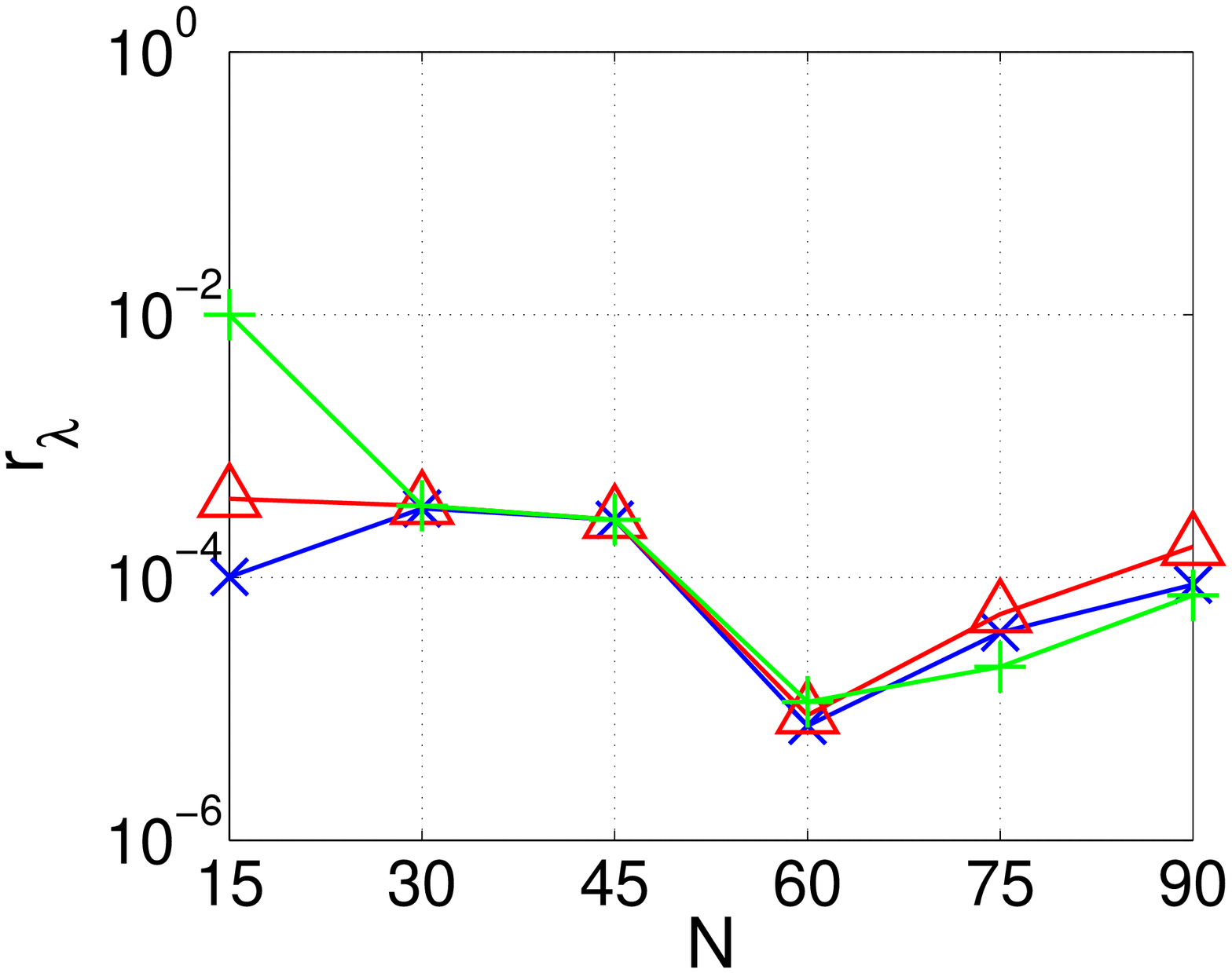} \\
(a) Computation time & (b) Maximum of TT-ranks & (c) Relative residual
\end{tabular}
\caption{\label{Fig_Convection_Preconditioner}(a) Computation time, (b) maximum of the estimated TT-ranks, and (c) relative residuals obtained by the proposed MALS algorithm for the $2^N\times 2^N$ coefficient matrix of the convection-diffusion equation for various values of the regularization parameter
$\lambda=10^{-6},10^{-4},10^{-2}$.
}
\end{figure}

Next, we computed solutions to the convection-diffusion equation
\eqref{con_diff_eqn} numerically by using the function
\texttt{dmrg\_solve2} \cite{OseDol2012} in TT-Toolbox \cite{Ose2014},
where the linear equation was either preconditioned or not
by the estimated regularized pseudoinverse. 
For solving local optimization problems in the \texttt{dmrg\_solve2}, we used one of the three different Matlab functions, \texttt{gmres}, \texttt{bicgstab}, and \texttt{pcg}, in order to compare
them for solving nonsymmetric systems of linear equations. However, we found almost no differences in performances between them in this simulation, so, we only presented the results of the \texttt{bicgstab}. 
The \texttt{dmrg\_solve2} algorithm converged to the relative residual
tolerance of $10^{-4}$ within 20 full-sweeps mostly (one full-sweep is
equivalent to solving the local problems $2(N-2)$ times) for $15\leq N\leq 90$.  
The computational costs for solving the linear systems are illustrated in Figures~\ref{Fig:Convection_Linear}(a) and (b), where the preconditioned systems were solved much faster than without preconditioning. 
The estimated TT-ranks of the solution $\BF{x}$ ranged between 10 and 15 and were almost constant as $N$ increased (although not presented here), implying that the computational costs were affected mostly by the convergence of the local algorithm.
In addition, it should be noted that the computational costs for solving the square linear systems as illustrated in Figure~\ref{Fig:Convection_Linear} were lower than the costs for the preconditioner computation in Figure~\ref{Fig_Convection_Preconditioner}(a). 

\begin{figure}
\centering
\begin{tabular}{cc}
\includegraphics[width=6cm]{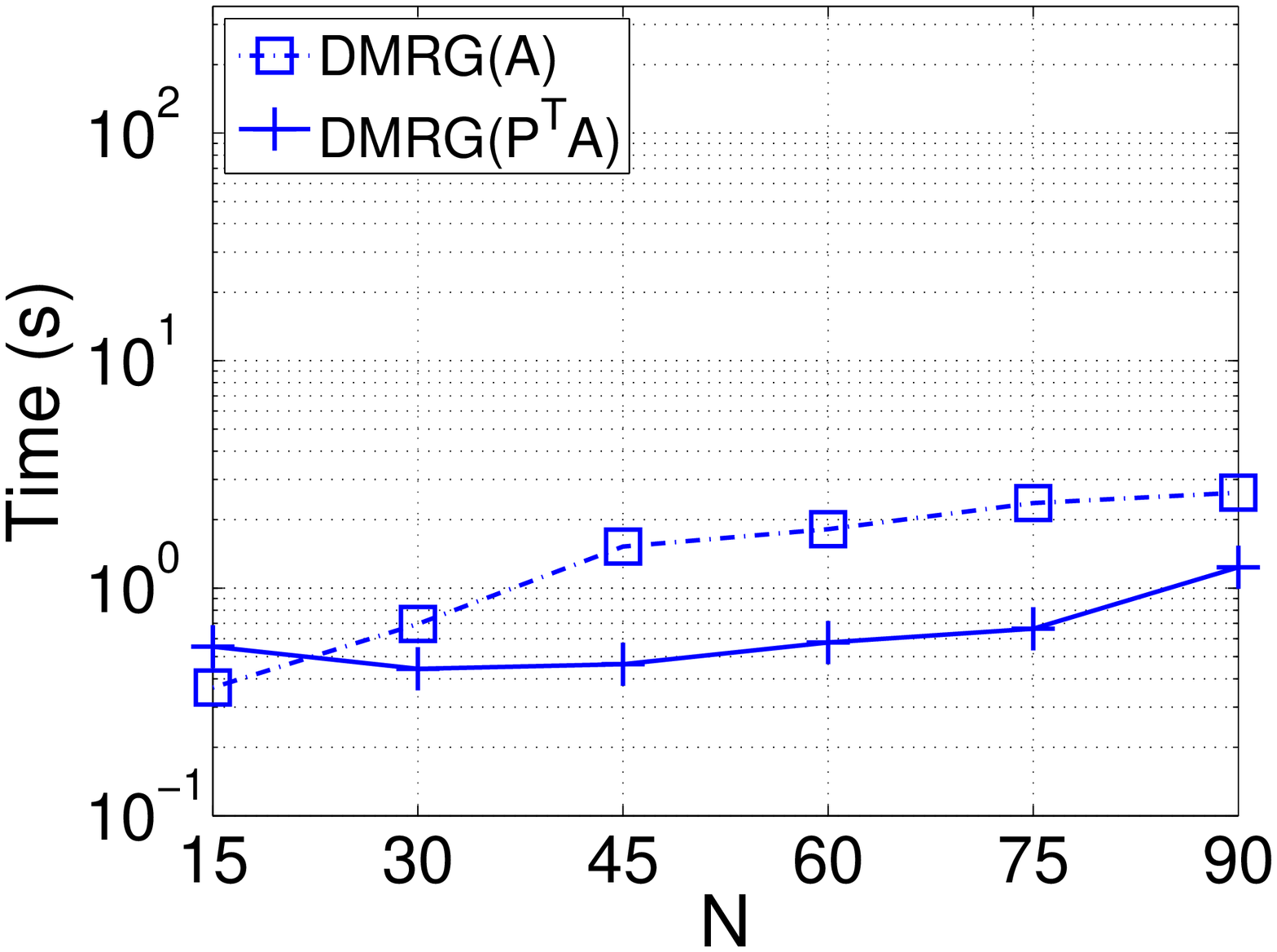}& 
\includegraphics[width=6cm]{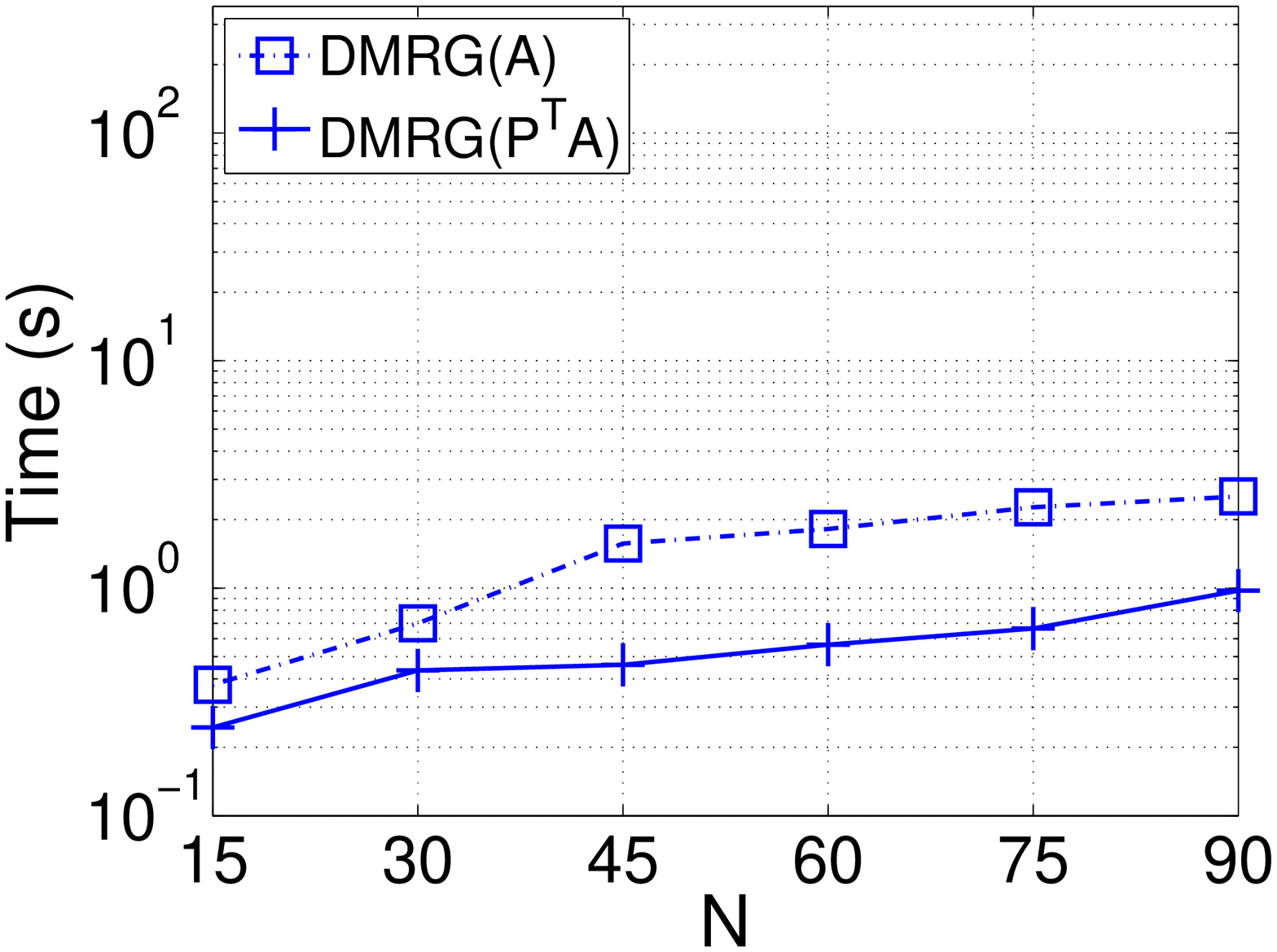}
\\
(a) $\lambda=10^{0}$  & (b) $\lambda=10^{-6}$
\end{tabular}
\caption{\label{Fig:Convection_Linear}
Comparison of the performances of the numerical solution algorithm
(\texttt{dmrg\_solve2} \cite{Ose2014,OseDol2012}) for solving
of the linear systems without preconditioning (DMRG(A)) and
preconditioned systems (DMRG($P^TA$)) described in
Section~\ref{sec-simul-convection}. 
For local optimizations for \texttt{dmrg\_solve2}, the Matlab function \texttt{bicgstab} was applied for the cases that (a) $\lambda=10^{0}$ and (b) $\lambda=10^{-6}$. 
}
\end{figure}

\section{Conclusion and Discussion}

We presented a new MALS algorithm for the computation
of approximate pseudoinverses of extremely large-scale structured matrices
using low-rank TT decomposition.
The proposed method can estimate the Moore-Penrose pseudoinverses
of any nonsymmetric or nonsquare structured matrices in low-rank
matrix TT format approximately, so it can be useful for preconditioning
overdetermined or underdetermined large-scale
systems of linear equations.

The proposed method provides stability and the fast convergence speed even for
very ill-conditioned large-scale matrices by regularization. The regularized
solutions were shown to have
relatively small TT-ranks in the numerical simulations, so the
computational costs for the construction of preconditioners and
solution to huge systems  of linear equations were
significantly smaller than without regularization.
The regularization technique is especially important when
the size of a data matrix is huge and ill-conditioned.

The proposed algorithm converts the large-scale minimization
problem into sequential smaller-scale optimization problems
to which any standard optimization methods can be applied. The
convergence to the desired solution is stable and relatively fast
because the TT-ranks of the approximate inverses can be adaptively
determined during the iteration process, and the decrease in the objective function value is monotonic. The computational cost
for running the proposed MALS algorithm is logarithmic in the
matrix size under the assumption of boundedness of TT-ranks.

The estimated pseudoinverses were applied to preconditioning of the
strongly nonsymmetric matrix occurring in systems of linear equations in the
convection-diffusion equation problem.
Several standard iterative algorithms such as GMRES, Bi-CGSTAB, 
and PCG showed the improved convergence in the numerical 
simulations, which demonstrate the effectiveness of the proposed
 algorithm by preconditioning and symmetrizing the coefficient matrix.

The main advantage of the proposed method lies in its applicability to
any rectangular huge structured matrices. Moreover, the regularization technique
employed in the proposed method helps to compute approximate
pseudoinverses reliably for any ill-conditioned structured matrices
which admit low-rank TT approximations.

The developed algorithm can further be directly applied to the
following areas. First, the computation of regularized Moore-Penrose
pseudoinverses is closely related to the regularized (filtered) solution of
systems of linear equations $\BF{Ax}=\BF{b}$, by
$\hat{\BF{x}} = {\BF{P}}_\lambda^{*\RM{T}} \BF{b}$, where
${\BF{P}}_\lambda^{*\RM{T}} \approx \BF{A}^\dagger$
\cite{Chung2014inv,Chung2015laa}.
Second, the large scale generalized eigenvalue decomposition (GEVD) problem
described by $\BF{Ax} = \lambda \BF{Bx}$ for a square matrix
$\BF{A}$ and a nonsingular square matrix $\BF{B}$
\cite{Golub1996matrix} can be transformed to a standard eigenvalue
decomposition problem $\BF{B}^{-1}\BF{Ax} = \lambda \BF{x}$
if the large-scale inverse matrix $\BF{B}^{-1}$ can be approximately
computed in TT format efficiently. Once the large-scale matrices
$\BF{A}$ and $\BF{B}^{-1}$ are represented in TT format, the
multiplication $\BF{B}^{-1}\BF{A}$ can be relatively easily performed
\cite{Ose2011}. Third, a special case of the optimization problem
\eqref{optim:general:global:TT} arises in important subspace clustering
problems \cite{Yu2011UAI_rank}, which can also be efficiently solved
using the proposed algorithm based on TT decompositions.

\end{document}